\tikzset{%
  show curve controls/.style={
    postaction={
      decoration={
        show path construction,
        curveto code={
          \draw [blue] 
            (\tikzinputsegmentfirst) -- (\tikzinputsegmentsupporta)
            (\tikzinputsegmentlast) -- (\tikzinputsegmentsupportb);
          \fill [red, opacity=0.5] 
            (\tikzinputsegmentsupporta) circle [radius=.5ex]
            (\tikzinputsegmentsupportb) circle [radius=.5ex];
        }
      },
      decorate
}}}
\newcommand\sO{{\mathcal O}}
\newcommand\sB{{\mathcal B}}
\newcommand\sK{{\mathcal K}}
\newcommand\Ga{\Gamma}
\newcommand\ga{\gamma}
\DeclareMathOperator{\tor}{Tor}
\def\Bbb{\bf}
\newcommand{\CC}{\ensuremath{\mathbb{C}}}
\newcommand{\ZZ}{\ensuremath{\mathbb{Z}}}
\newcommand{\QQ}{\ensuremath{\mathbb{Q}}}
\newcommand{\ra}{\ensuremath{\rightarrow}}
\def\eea{\end{eqnarray*}}
\def\bea{\begin{eqnarray*}}
\newcommand\dual{\mathrel{\raise3pt\hbox{$\underline{\mathrm{\thinspace d
\thinspace}}$}}}
\newcommand\qe{\ifhmode\unskip\nobreak\fi\quad $\Box$}
\def\BOX{\hfill\lower.5\baselineskip\hbox{$\Box$}}
\newtheorem{theo}{Theorem}[section]
\newtheorem{remarkk}[theo]{Remark}
\newenvironment{rem}{\begin{remarkk}\rm}{\end{remarkk}}
\newtheorem{defin}[theo]{Definition}
\newtheorem{prop}[theo] {Proposition}
\newtheorem{lemma}[theo]{Lemma}
\newtheorem{example}[theo]{Example}
\newenvironment{ex}{\begin{example}\rm}{\end{example}}
\newtheorem{notation}[theo]{Notation}
\newcommand{\bN}{{\mathbb N}}
\renewcommand{\a}{\alpha}
\renewcommand{\b}{\beta}
\newcommand{\e}{\varepsilon}
\newcommand{\x}{\xi}
\newcommand{\cutoff}[1]{}
\DeclareMathOperator{\Aut}{Aut}
\newcommand{\tch}{{T\!C\!h}} 
\begin{document}

\title[Stabilization for the Homology of moduli of curves with  symmetries]
{Genus stabilization for the Homology  of  moduli spaces 
of orbit-framed curves with symmetries-I}
\author{Fabrizio Catanese, Michael L\"onne, Fabio Perroni}

\thanks{2010 Mathematics Subject Classification: 14H15, 14H30,
14H37, 20J05, 30F60, 55M35, 57M12, 57S05.\\
Keywords: Algebraic curves with automorphisms, moduli spaces, mapping class group, Teichm\"uller space,
genus stabilization,
 Hurwitz monodromy vector, homological invariant.
  }

\date{\today}

\maketitle

\begin{abstract}

In a previous paper, \cite{CLP16}, we showed   that the moduli space of curves $C$ with a $G$-symmetry (that is,  with  a faithful 
action of a finite group $G$),   having  a fixed  generalized homological invariant $\e$, is irreducible 
 if the genus $g'$ of  the quotient curve $C' : = C/G$ satisfies 
 $g'>>0$.

Interpreting this result as  stabilization  for the 0-th homology group of the moduli space of curves with $G$-symmetry,
we begin here a program for showing  genus  stabilization for all the homology groups of these spaces, in similarity  to the
results of   Harer \cite{harer85} for the moduli space of curves.

In this first paper we prove homology stabilization for  a variant of the moduli space where one  $G$-orbit is tangentially  framed.

\end{abstract}


\section{Introduction}

The main purpose of this article is to prove  a first result concerning  the stabilization of the homology of  
the moduli spaces of curves admitting a given  symmetry group $G$.

Recall that Harer \cite{harer85} proved that the homology groups of the moduli space of curves $\mathfrak M_g$ 
stabilize for $g >>0$,  indeed, more precisely,
$$ H_k( \mathfrak M_g, \QQ) \cong  H_k( \mathfrak M_{g+1}, \QQ), \  {\rm for} \ g \geq 3k+1.$$ 

Fix $
\Sigma = \Sigma_g$ to be a fixed  compact oriented differentiable manifold of real dimension $2$ and genus $g$: 
letting $\mathcal{C}S(\Sigma)$ be the space of complex structures on $\Sigma$  compatible with the given orientation,
then  the moduli space of curves
$\mathfrak{M}_g $, and  the Teichm\"uller space $\mathcal{T}_g $ are  the quotients:

$$
\mathfrak{M}_g := \mathcal{C}S(\Sigma)/{\rm Diff}^+(\Sigma), \ 
\mathcal{T}_g := \mathcal{C}S(\Sigma)/{\rm Diff}^0(\Sigma) \ 
\Rightarrow \mathfrak{M}_g = \mathcal{T}_g/ {\rm Map}_g = \mathcal{T}_g/ \Ga_g
$$
 where $\Ga_g : = {\rm Map}_g = \pi_0 ( {\rm Diff}^+(\Sigma))=  {\rm Diff}^+(\Sigma) / {\rm Diff}^0(\Sigma) $.

Teichm\"uller's theorem says that $\mathcal{T}_g \subset \CC^{3g-3}$ is an open subset diffeomorphic to a ball. 
Moreover $\Ga_g = {\rm Map}_g $ acts properly discontinuously on $\mathcal{T}_g$, but not freely.

Hence  the rational cohomology (resp.  homology) of the moduli space is calculated by group cohomology (resp. group homology):
$$
H^*(\mathfrak{M}_g, \QQ) \cong H^*(\Ga_g, \QQ) , H_*(\mathfrak{M}_g, \QQ) \cong H_*(\Ga_g, \QQ) .
$$

Indeed, Harer proved some stronger results concerning the integral homology of the groups $\Ga^n_{g,r}$, mapping class
groups of the groups  ${\rm Diff}^+(\Sigma_{g,r}^n)$ of diffeomorphisms of a Riemann surface  $\Sigma = \Sigma_{g,r}^n$ with $n$-punctures and $r$ boundary components,
acting as the identity on the boundary curves  and on the punctures: he showed  that, for $g$ large, these are independent of $g,r$ (to consider also the punctures and the boundary components was necessary to pursue several  inductive steps).

Inside the moduli space of curves there are some important special  subvarieties, corresponding to curves
admitting a $G$-symmetry, namely such that there is an injection $ G \hookrightarrow \Aut(C)$, where $G$ is
a fixed finite group (of order $\leq 84(g-1)$ by Hurwitz' theorem, if, as we  assume here  throughout,  $g\geq 2$).

These subvarieties are the union of locally closed sets which are better understood  as the image of a $G$-marked moduli space $\mathfrak{M}_g(G)$,
whose irreducible components are determined by the type of the topological action of $G$ on $C$.

The topological type (and the differentiable type)  of the action is completely 
determined by the  action of $G$ on the fundamental group $\Pi_g$ of $C$, up to inner 
automorphisms.
Hence we get an injective  group homomorphism
$$
\rho \colon G  \hookrightarrow {{\rm Aut}(\Pi_g)}/{{\rm Inn}(\Pi_g)}={\rm Out}(\Pi_g) \, 
$$
with image in  an index $2$ subgroup:
$$\rho \colon G  \hookrightarrow {\rm Out}^+(\Pi_g)=\Ga_g \cong {{\rm Diff}^+(C)}/{{\rm Diff}^0(C)}.$$

It turns out that curves with a fixed topological  type of action $\rho$ correspond to the fixed locus
$
\mathcal{T}_{g,\rho} := \mathcal{T}_g^{\rho(G)}
$ of $\rho(G)$ in $\mathcal{T}_g$, which (see \cite{FabIso}) is diffeomorphic to a ball: in particular the
rational cohomology of these components $\mathfrak{M}_{g,\rho} $ is then calculated  via group cohomology
of the normalizer of $\rho(G)$ inside the mapping class group.

In practice, however, these components $\mathfrak{M}_{g,\rho} \subset \mathfrak{M}_{g}(G) $
are better   understood through the classical study of the quotient map
$$ C \ra C' : = C/G.$$ 
 We let  $g':=g(C')$ be  the  genus of  the quotient curve  $C'$,   $\sB=\{ y_1, \ldots , y_n\}$ be the branch locus, and 
let $m_i$  be the  branching multiplicity
of $y_i$  (we assume $2 \leq  m_1 \leq m_2 \leq \dots \leq m_n$).

These  numbers $g', n, m_1, \ldots , m_n$ are constant in each  
irreducible variety $\mathfrak M_{g,\rho}$, they form the `primary numerical type' of the cover
$C\to C'$, and 
$C\to C'$ is determined (by virtue of Riemann's existence theorem)  by the monodromy:
$
\mu \colon \pi_1(C'\setminus \sB, y_0) \to G \, .
$

 After the choice of a diffeomorphism of the pair $(C', \sB)$ with $(\Sigma_{g'}^n, \sB_\Sigma)$,  we have:

\begin{small}
$$ \pi_1(C' \setminus \sB, y_0) \cong \Pi_{g', n}:= \langle  \a_1, \b_1 , \ldots , \a_{g'}, \b_{g'}, \ga_1, \ldots , \ga_n \mid \prod_{j=1}^{g'} [\a_j, \b_j] \prod_{i=1}^n \ga_i =1 \rangle ,$$
\end{small}

hence,  after the choice of a basis for the fundamental group of $\Sigma_{g'}^n  \setminus \sB_\Sigma$, the datum of $\mu $ is equivalent to the datum of a Hurwitz -vector

$$
v : = ( a_1, b_1, \ldots , a_{g'}, b_{g'}, c_1, \ldots , c_n ) \in G^{2g'+n}
$$

s.t.  
(1) $G$ is generated by  

 $ a_1, b_1, \ldots , a_{g'}, b_{g'}, c_1, \ldots , c_n $,
 
($c_i \not= 1_G$ $\forall i$,  since $c_i$ has order $m_i \geq 2$),   and

(2)  $\prod_{j=1}^{g'}[a_j,b_j]\prod_{i=1}^n c_i =1.$

%
%

 The upshot is that the number of the 
components of $\mathfrak{M}_g(G)$  is  independent of $g'$ if the genus $g'$ of the quotient curve is large enough
 and the monodromy `stays the same': 
for this purpose,  as in Harer's theorem,
one  needs to define   genus stabilization: 
this is done   considering  Riemann surfaces obtained adding (compatibly)  a handle to  $C' , \Sigma_{g'}$,
and  extending the monodromy in a trivial way to the handle,  namely, setting:
$
\mu(\a_{g'+1})= \mu(\b_{g'+1})=1 \in G  
$
 (see the pictures in subsection 2.1).

 Using the notion of genus stabilization a breakthrough was made by Nathan Dunfield and William Thurston 
in the \'etale case where the action of $G$ is free: 

\begin{theo}[Dunfield-Thurston]
In the unramified case ($n=0$), for $g' >> 0$, the  unmarked topological types
are in bijection with the set
$$
{H_2(G,\ZZ)}/{{\rm Aut}(G)} \, .
$$
\end{theo}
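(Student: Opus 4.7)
The plan is to construct a natural invariant $\eps\colon \{\text{unramified Hurwitz vectors of genus } g'\}/\Ga_{g'} \to H_2(G,\ZZ)$, show it descends modulo $\Aut(G)$, and then establish surjectivity and (for $g'\gg 0$) injectivity on unmarked types.

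For the construction of $\eps$, a Hurwitz vector $v=(a_1,b_1,\ldots,a_{g'},b_{g'})$ with $\prod_{j=1}^{g'}[a_j,b_j]=1$ and generating $G$ yields a surjection $\mu_v\colon \pi_1(\Sigma_{g'})\onto G$, hence a classifying map $f_v\colon \Sigma_{g'}\to BG$, and I set $\eps(v):=(f_v)_*[\Sigma_{g'}]\in H_2(BG,\ZZ)=H_2(G,\ZZ)$. Equivalently, via Hopf's formula $H_2(G,\ZZ)\cong (R\cap [F,F])/[F,R]$, with $F$ free on symbols $\tilde a_j,\tilde b_j$ and $R=\ker(F\onto G)$, $\eps(v)$ is the class of the lift $\prod[\tilde a_j,\tilde b_j]\in R\cap [F,F]$. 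Invariance under the $\Ga_{g'}$-action is clear because $\Ga_{g'}$ acts on $\pi_1(\Sigma_{g'})$ by outer automorphisms realized by orientation-preserving diffeomorphisms, which preserve $[\Sigma_{g'}]$. Invariance under direct stabilization, that is, appending a trivial handle pair $(1,1)$, is equally clear since this amounts to pre-composing $f_v$ with a degree-one collapse $\Sigma_{g'+1}\to \Sigma_{g'}$. Post-composition by $\Aut(G)$ induces the standard $\Aut(G)$-action on $H_2(G,\ZZ)$, so $\eps$ descends to a map $\bar\eps$ on unmarked types.

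Surjectivity is the easier direction. Fix any generating set $S$ of $G$ and any class $\eta\in H_2(G,\ZZ)$. Using the Hopf presentation, $\eta$ is represented by an element of $R\cap [F,F]$, which can be written as a product of commutators $\prod_{j=1}^{g'}[\tilde a_j,\tilde b_j]$ with $\{a_j,b_j\}\supset S$, provided $g'$ is chosen large enough; projecting to $G$ yields a valid Hurwitz vector $v$ with $\eps(v)=\eta$.

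The main obstacle is injectivity, and it is precisely here that $g'\gg 0$ is essential. The plan has two parts. First, I would use Nielsen moves on the $2g'$-tuple to reduce each given Hurwitz vector to a normal form: a fixed minimal generating system of $G$, well-defined up to $\Aut(G)$ after enough trivial stabilizations (this uses the stable Nielsen-equivalence theorem for generating systems of a finite group), together with remaining ``handle entries'' whose product of commutators is already $1$ in $G$. Once both vectors are brought into such a normal form, the obstruction to carrying one handle part to the other by Hurwitz moves is exactly a class in $(R\cap [F,F])/[F,R]$, namely the difference $\eps(v)-\eps(w)\in H_2(G,\ZZ)$; hence $\bar\eps(v)=\bar\eps(w)$ forces Hurwitz--$\Aut(G)$ equivalence after further stabilization. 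The hardest technical step is the realization of every elementary generator $[f,r]\in [F,R]$ (with $r\in R$, $f\in F$) as an explicit composition of Nielsen moves and handle slides on the $2g'$-tuple: this requires sufficiently many ``free'' trivial handles to implement the algebraic cancellation as a geometric sequence of moves, which is exactly what forces the stabilization hypothesis $g'\gg 0$.
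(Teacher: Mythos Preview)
The paper does not actually prove this theorem: it is quoted as background, attributed to Dunfield and Thurston \cite{DT}, and then the authors pass immediately to their own extension (Theorem~\ref{stablebranched}) and to the homological-stability programme that is the real subject of the paper. So there is no ``paper's own proof'' to compare against.

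That said, your sketch is a faithful outline of the Dunfield--Thurston argument (which in turn builds on Livingston \cite{Livingston} and Zimmermann \cite{Zimmer}). The construction of $\eps$ via Hopf's formula, its invariance under $\Ga_{g'}$ and under direct stabilization, and the surjectivity step are all correct and standard. For injectivity you have identified the right skeleton: reduce both tuples to a common generating block using stable Nielsen equivalence of generating $n$-tuples of a finite group (this is where $g'\gg 0$ first enters), and then show that the residual discrepancy lies in $[F,R]$ and can be killed by mapping-class-group moves provided enough trivial handles are available. One point to tighten: the moves available are not merely Nielsen moves on the free group on $2g'$ letters but the full $\Ga_{g'}$-action on $\pi_1(\Sigma_{g'})$, and it is precisely the extra ``Zimmermann moves'' (swapping adjacent handle pairs at the cost of a commutator conjugation, cf.\ \cite{Zimmer}) that let one realise each $[f,r]$ geometrically. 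Your last paragraph acknowledges this is the hardest step but stops short of saying how it is done; in an actual proof this is where the work lies, and you would need either to reproduce the explicit handle computations of \cite{DT,Livingston,Zimmer} or to cite them.
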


The second homology of $G$ appears here naturally, since the monodromy $\mu : \pi_1(C') \ra G$
yields a corresponding continuous map of classifying spaces $f : C' \ra B(G)$, and
the image of the fundamental class of $C'$ via $H_2(f)$ yields  the homology 
invariant $\e \in H_2(G,\ZZ)$.

In \cite{CLP16} we extended the above theorem to the ramified case with a technically complicated
procedure:

\begin{theo}\label{stablebranched}
For $g'>>0$, the unmarked topological types
 are in bijection with the set of admissible classes   via the map $\hat{\e}$ associated to the generalized homology  invariant $\e$
 introduced in \cite{CLP15}.
\end{theo}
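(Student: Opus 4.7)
The plan is to establish the bijection by verifying separately that the map $\hat{\e}$ from unmarked topological types to admissible classes is well-defined, surjective, and injective, with injectivity being the main obstacle that forces the hypothesis $g' \gg 0$.

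I would first recall the construction of $\e$: given a Hurwitz vector $v = (c_1, \ldots , c_d, a_1, b_1, \ldots , a_{g'}, b_{g'})$ with monodromy $\mu \colon \pi_1(C' \setminus \sB) \to G$, one assembles the ordinary $H_2(G,\ZZ)$-part from the classifying map $C' \to BG$ (after compactifying by filling in the punctures with $2$-cells attached along the loops carrying the $c_i$) together with the multiset of conjugacy classes $[c_i]$ and any further cohomological data needed to keep $\hat{\e}$ well-defined on stable equivalence classes. The assertion that $\hat{\e}$ descends to \emph{unmarked} topological types then amounts to checking invariance under (a) the Hurwitz braid action on the branch cycles, (b) simultaneous conjugation by elements of $G$ and by $\Aut(G)$, and (c) the trivial-handle stabilization move $\mu(\a_{g'+1}) = \mu(\b_{g'+1}) = 1$; each of these I would verify by a direct computation at the level of relative $2$-cycles on the pair $(C',\sB)$.

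For surjectivity, given an admissible class $\eta$, I would realize it by explicit construction: fix branch cycles $c_i$ in the prescribed conjugacy classes subject to the product-one and generation conditions, which is possible in the stable range because any generation or product defect can be absorbed into the $(a_j, b_j)$ handles; then adjust the handles to realize the prescribed $H_2(G,\ZZ)$ component, using that commutators of generators span $H_2(G,\ZZ)$ in Hopf's presentation. The admissibility hypotheses on $\eta$ should be tailored precisely so that this realization procedure goes through as soon as $g'$ is large enough.

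The injective direction is the heart of the argument. My plan is a two-stage reduction to the unramified Dunfield--Thurston theorem stated above. First, using braid moves on the $c_i$ together with global $G$-conjugation, I would bring the branch-cycle block of any competitor $v'$ into the same normal form as that of $v$; this is feasible because the multiset of conjugacy classes $[c_i]$ is part of the primary numerical type and hence fixed. Second, once the branch cycles coincide, I would regard the remaining data as an unramified monodromy on the handle part and invoke Dunfield--Thurston, absorbing any residual defect into sufficiently many added trivial handles. The main obstacle will be interlocking these two stages coherently: braid moves on the $c_i$ will in general alter the $(a_j, b_j)$ block through the long relation $\prod c_i \prod [a_j, b_j] = 1$, and one must carefully track how these alterations interact with the $H_2$-component of $\e$. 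This is precisely where stabilization by handles is indispensable: with enough trivial handles present, every commutator in $G$ can be individually realized and transported past the branch cycles, effectively decoupling the two stages. The technical combinatorics of implementing this decoupling—likely via a Nielsen-style normal form for Hurwitz vectors in the stable range, extending the surgery arguments of Dunfield--Thurston from $\pi_1$ to the ramified setting—is where I expect most of the work, and the constant hidden in ``$g' \gg 0$'' to come out explicitly.
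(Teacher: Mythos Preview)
The paper does not contain a proof of this statement. Theorem~\ref{stablebranched} is presented in the Introduction as a recall of the main result of the authors' earlier work \cite{CLP16}; the text immediately preceding it reads ``In \cite{CLP16} we extended the above theorem to the ramified case with a technically complicated procedure,'' and no argument is given here. So there is no in-paper proof against which your proposal can be compared.

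That said, your outline is broadly in the spirit of the actual argument in \cite{CLP16}: the invariant $\hat{\e}$ is indeed built from the $H_2(G,\ZZ)$ contribution plus the branch data, well-definedness under braid moves and stabilization is checked, surjectivity onto admissible classes is by explicit construction, and injectivity is the hard part requiring $g'\gg 0$. Where your sketch is optimistic is in the ``two-stage reduction to Dunfield--Thurston'': the actual proof does not cleanly separate into normalizing the $c_i$ block and then invoking the unramified theorem on the handle part. The branch cycles and the handle generators are entangled through the single relation, and the generalized invariant of \cite{CLP15} lives in a quotient group built from $G$ and the conjugacy classes of the $c_i$ together, not simply in $H_2(G,\ZZ)$ alongside a multiset. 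The technical core of \cite{CLP16} is a delicate inductive normal-form argument on Hurwitz vectors that simultaneously manipulates both blocks; your proposed decoupling via ``transporting commutators past the branch cycles'' is the right intuition but underestimates the bookkeeping, which is why the authors themselves describe the procedure as ``technically complicated.''
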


Interpreting the previous result as a stabilization result for the group $H_0 (\mathfrak{M}_g(G), \ZZ)$
it was natural to ask for an extension of this result to higher homology groups, as done by Harer for
$\mathfrak{M}_g$.

For the  purpose of explaining how this goes, we want,  following Hain and Looijenga \cite{h-l}, to spell out some other interpretation
of the groups $\Ga^n_{g,r}$ as mapping class groups.

Again, fix an oriented differentiable  Riemann surface $\Sigma_{g'}$ with $n$ fixed points 
$y_1, \ldots, y_n$ and $r$ 
fixed points  $z_1, \ldots , z_r$ 
(the $n+r$ points are all distinct) and a fixed tangent vector $v_j \neq 0 $ at each  $z_j$. We can 
 take the quotient of  the space of complex structures on $\Sigma$ by different
groups

$$  {\rm Diff} (\Sigma)^n_r \subset {\rm Diff} (\Sigma)^{n+r}  \subset  {\rm Diff} (\Sigma)$$
where the first subgroup fixes all the points $y_i, z_j$ 
individually and  acts as the identity on the tangent space
at each point  $z_j$, the second just fixes all the points  $y_i, z_j$.

The resulting moduli spaces are then 
$$ \mathfrak{M}_{g',r}^n  \ra \mathfrak{M}_{g'}^{r+n} \ra 
\mathfrak{M}_{g'},$$
where the first one is the moduli space of complex curves $C'$ with $n$ marked points and $r$ marked (nonzero)
tangent vectors, the second is the moduli space of complex curves $C'$ with $n+r$ marked points. 

The corresponding mapping class groups are 
$$\Ga_{g',r}^n \ra  \Ga_{g'}^{n+r} \ra \Ga_{g'} ,$$
since to a Riemann surface $\Sigma'$ with $r$ boundary components we can attach a disk along each boundary circle,
and if we have a diffeomorphism acting as the identity (in a neighbourhood of) on the boundary of $\Sigma'$ we 
can extend it  to the disk as the identity.

  To compare the picture  downstairs  on $C'$
to the picture upstairs on $C$, we observe that the choice of a tangent vector $v_j$ at  $z_j$ 
is equivalent to
a tangent framing of the $G$-orbit $\sO_j$ lying above  $z_j$: 
this means that we give $|G|$ tangent vectors
at the points of $\sO_j$ which are permuted simply transitively by $G$, 
and we call this a {\it tangential orbit framing}.

Again we have  corresponding Teichm\"uller spaces, hence calculating the rational cohomology groups 
of our  moduli spaces $\mathfrak{M}_g (G)$ boils down to calculations related with
some  cohomology (homology)  of groups related to  the  mapping class groups.

The road  to do this was paved by  two important  papers, the first one by Ellenberg, Venkatesh and Westerland
\cite{e-v-w} concerning Hurwitz spaces, which has been  an important source of inspiration for us  and, as pointed out by the referee,  for many other authors
who studied Hurwitz spaces, see \cite{homstab} for a good list of references.

  Our contribution in this direction is to introduce instead some different (noncommutative) rings of connected components, and we consider certain 
spectral sequences associated to a Koszul type complex. We need to extend many of the
homological algebra results of 
\cite{e-v-w}, since we work  here with $\ZZ$-coefficients instead of coefficients taken   in a given field.

On the other hand, it is important to observe that there are at least two kinds of stabilization procedures,
the first one being {\bf branch stabilization}, see for instance \cite{lonne}, 
and the second one being {\bf genus stabilization}, the one to which we are particularly interested in this article: and our ring of connected components is different from the previous ones, as dictated by our goal of studying genus stabilization.

For genus stabilization, in particular 

for some crucial results concerning the degeneration of a  first page spectral sequence,
the complexes  used by Harer are no longer suitably sufficient.

Hence the second  important paper for us has been the one  by Hatcher and Vogtmann \cite{h-v},
and we have 
successfully   used  the complex 
of tethered chains,  introduced by them.

This is our main result

\begin{theo}\label{main}
Let $\mathfrak{M}_{g}(G)^*$ be the moduli space of curves $C$ with a $G$-action 
and with one  tangentially-framed $G$- orbit $\sO$.

Then, for $g > > 0$, the homology group $H_i(\mathfrak{M}_{g}(G)^*, \QQ)$ 
is independent of $g$.

\end{theo}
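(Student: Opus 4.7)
The plan is to convert the homology of $\mathfrak{M}_{g}(G)^{*}$ into group homology of a suitable equivariant mapping class group, and then prove stability using a $G$-equivariant spectral sequence based on the tethered-chain complex of Hatcher--Vogtmann \cite{h-v}, following the homological-algebra strategy of \cite{e-v-w} adapted to the present non-commutative, integral setting.

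First, since the $\rho(G)$-fixed locus in $\mathcal{T}_g$ is a ball, each component of $\mathfrak{M}_{g}(G)^{*}$ is rationally a classifying space for the normalizer of $\rho(G)$ in a suitable orbit-framed mapping class group; after encoding the tangential framing as a boundary component downstairs on $C'$, this normalizer is a subgroup of $\Ga_{g',1}$. Decomposing by topological type gives
$$H_{*}(\mathfrak{M}_{g}(G)^{*}, \QQ) \cong \bigoplus_{[v]} H_{*}(\Ga_{g',1}(G,v), \QQ),$$
with $[v]$ ranging over equivalence classes of Hurwitz vectors of the prescribed primary numerical type (one branch orbit, free action outside it). The direct-stabilization map is the operation $\mu(\a_{g'+1})=\mu(\b_{g'+1})=1$, realized geometrically by attaching a trivially-monodromed handle to $C'$.

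Second, I would assemble the direct sum of these group homologies, as $g'$ varies, into a graded module over a non-commutative ring $\sR$ whose degree-one part encodes the Hurwitz-type stabilization operations, giving the integral, non-commutative analogue of the setup of \cite{e-v-w}. The desired stability statement then becomes: a distinguished central element of $\sR$ acts invertibly in a stable range. To prove this I would let the mapping class group act on the semi-simplicial complex of tethered chains on $\Sigma_{g'}$ with tethers anchored at the framed $G$-orbit; the framing hypothesis is exactly what lets $G$ permute a full orbit of tethers simply transitively, so the Hatcher--Vogtmann complex carries a compatible $G$-action. Its high connectivity survives equivariantly, so the associated equivariant spectral sequence expresses the homology of the large mapping class group in terms of homologies of stabilizers which are, inductively, equivariant mapping class groups of strictly lower genus, i.e.\ exactly the objects the induction hypothesis controls.

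The principal obstacle is the degeneration of this first-page spectral sequence in a stable range. Over a field the EVW Koszul-complex exactness does the job, but with $\ZZ$-coefficients and non-commutative $\sR$ one must prove a finite-generation statement for the $\sR$-module of components combined with a central-element condition that forces the higher Koszul homology to vanish on a range growing with $g'$. The inductive scheme is then: the base case of $H_{0}$-stability is exactly Theorem \ref{stablebranched}; the inductive step propagates stability to $H_{i}$ for all $i$, provided the Koszul-vanishing range grows linearly in $g'$, which is what allows specialization back to $\QQ$-coefficients to yield the stated theorem. Establishing this Koszul-type vanishing in the non-commutative integral setting --- rather than the commutative field-coefficient one of \cite{e-v-w} --- is the main technical difficulty.
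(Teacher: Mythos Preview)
Your proposal follows essentially the same architecture as the paper: Hatcher--Vogtmann tethered chains, a non-commutative ring of connected components, Koszul-type $\mathcal{K}$-complexes, and the EVW-style spectral-sequence induction on $q$ extended to integral coefficients, with the $H_0$ base case supplied by the earlier genus-stabilization result. One implementation point where the paper differs from your sketch: rather than putting a $G$-action on the tethered-chain complex and invoking equivariant connectivity, the paper works entirely downstairs on $\Sigma_{g'}^1$, encoding the cover as a point of $G^{2g'}$ and forming the product complex $X_{g'} = \tch_{g'}^1 \times G^{2g'}$ with the diagonal $\Gamma_{g'}^1$-action---this way the ordinary connectivity of $\tch_{g'}^1$ from \cite{h-v} suffices directly, Shapiro's lemma identifies the $E^1$-page with the $\mathcal{K}$-complex of $M(q)$, and no separate equivariant-connectivity statement is needed.
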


We could  of course establish a more general theorem taking $r$ tangentially-framed $G$- orbits $\sO_1, \dots, \sO_r$,
but the main line of argument, while remaining   entirely similar, needs a quite heavy notation.

For the sake of simplicity of notation, we carry out the proof of Theorem \ref{main} in the case where 
the action of $G$ is free outside
of $\sO$, that is, the number  $n=0$. 

In the last Section \ref{general} we develop the arguments needed for the more general  case where there are more branch points, and this proves the full statement  Theorem \ref{main}. 

Observe moreover that, if in Theorem \ref{main}  $n=0$ and  we restrict to the subspace of the moduli space 
where the orbit $\sO$ is in bijection with $G$ (i.e.,  the local monodromy at  $z_1$ is the identity),
we obtain an unramifed covering $C \ra C'$, hence  our moduli space maps onto $\mathfrak{M}_{g}(G)^{unr}$.

We plan, in a future paper, to show homology stabilization also for the space 
$\mathfrak{M}_{g}(G)^{unr}$ (considered in \cite{DT}) and, in  general, for the 
more general  space $\mathfrak{M}_{g}(G)$.

Our main Theorem \ref{main} is a consequence of a more technical result, which we would now like to illustrate.

First of all, we start from the Hurwitz vectors, which describe the monodromy of the covering $C \ra C' = C/G$.

 Since   in Theorem \ref{main} we have $r=1$,  in the case   $n=0$ the set of Hurwitz vectors is in bijection with  the set \[
 G^{2g'} = \left\{ \: (a_1,b_1, a_2,b_2, \dots, a_{g'}, b_{g'}) \: \big |\: a_i, b_i\in G \:
\right\}.
\]
 The mapping class group $ \Ga_{g', 1}:=\Ga^0_{g', 1}$ acts on them using the identification 
of $\operatorname{Hom}(\pi_1(\Sigma',y_0), G)$
with $G^{2{g'}}$ by means of a  fixed geometric basis.

Let us denote by $R_{g'}:= \ZZ \langle G^{2{g'}}/\Gamma_{g', 1} \rangle$, for $g'>0$, and  set $R_0:= \ZZ$.

 In Section \ref{ring of connected components} 
we show the existence of natural  homomorphisms yielding a non-commutative graded ring structure on 
$$
R:= \oplus_{g' \geq 0} R_{g'} \, .
$$
This graded ring is called the {\bf ring of connected components}, it depends heavily on the group $G$,
and we consider (graded) left $R$-modules, establishing analogues of some homological algebra results
of \cite{e-v-w}, which we have to prove in detail since we are dealing with coefficients in $\ZZ$
and not  in a field.

Our  typical example is the  $q$-th homology-module 

\begin{equation}\label{Module}
M(q)= \oplus_{g'\geq 0}M(q)_{g'} :=  \oplus_{g'\geq 0} H_q ( \Gamma_{g', 1}, \ZZ \langle G^{2{g'}} \rangle ) \, .
\end{equation}
Note that $M(0)=R$.

For every $\ell\geq 0$
and $(\tilde{a}_1, \tilde{b}_1, \ldots , \tilde{a}_\ell, \tilde{b}_\ell) \in G^{2 \ell}$, the map
\begin{eqnarray*}
\ZZ \langle G^{2g'} \rangle &\to& \ZZ \langle G^{2\ell + 2g' } \rangle \\
(a_1, b_1,  \ldots , a_{g'}, b_{g'}) &\mapsto& 
(\tilde{a}_1, \tilde{b}_1, \ldots , \tilde{a}_\ell, \tilde{b}_\ell, a_1, b_1,  \ldots , a_{g'},  b_{g'})
\end{eqnarray*}
is equivariant with respect to the inclusion $ \Gamma_{g', 1} \hookrightarrow \Gamma_{\ell+g', 1}$,
and, considering  the class { $\llbracket \tilde{a}_1, \tilde{b}_1, \ldots , \tilde{a}_\ell, \tilde{b}_\ell \rrbracket$}
of  $(\tilde{a}_1, \tilde{b}_1, \ldots , \tilde{a}_\ell, \tilde{b}_\ell)$ inside $R_\ell$,  it induces a homomorphism 
\begin{eqnarray*}
 H_q ( \Gamma_{g', 1}, \ZZ \langle G^{2g'} \rangle ) & \to & H_q ( \Gamma_{\ell+g', 1}, \ZZ \langle G^{2\ell + 2g' } \rangle ) \\
 m &\mapsto& \llbracket \tilde{a}_1, \tilde{b}_1, \ldots , \tilde{a}_\ell, \tilde{b}_\ell\rrbracket  m \, 
\end{eqnarray*}
hence  it defines   a structure of graded left $R$-module on $M(q)$.

\bigskip

The algebraic analogue of  genus stabilization is now expressed by an operator $U$, such that $U(m)=\llbracket 1,1\rrbracket m$,
which is the main actor on the stage.

\bigskip

Then the main homological argument is concerned with bounding,
via several exact sequences, the degrees of certain graded modules, especially the Kernel and Cokernel 
of multiplication by $U$: here the   \textit{degree} of a graded module $M$ is defined as  
${\rm deg}(M): = \sup \{ g' \, | \, M_{g'}\not= 0\}$.

\smallskip

With the above notation, our technical result (Theorem \ref{Ustabilizes})
is 
\begin{theo}\label{Ustabilizes}
Let $G$ be a finite group. Let $R$ be the ring of connected components associated to $G$,
and let $M(q)$ be the graded left $R$-module  defined in \eqref{Module}. 

Then there are homological constants  $\tilde{A}(R)$ and $A(R)$ such that 
 the homomorphism
$$
{U} \colon  M(q)_{g'} \to M(q)_{g'+1}
$$
is an isomorphism for every  $g'\geq (8\tilde{A}(R) + \deg (U))q + \tilde{A}(R) +6A(R) +2$. 
\end{theo}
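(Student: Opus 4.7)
The plan is to follow the homological blueprint of Ellenberg--Venkatesh--Westerland~\cite{e-v-w}, with two essential modifications: the coefficient ring is $\ZZ$ rather than a field, and the arc complex used by them and by Harer~\cite{harer85} is replaced by the Hatcher--Vogtmann complex of tethered chains~\cite{h-v}, which provides a stronger connectivity estimate adequate to absorb integral torsion. The argument proceeds by induction on the homological degree $q$. The base case $q=0$ reduces to showing that $U\colon R_n \to R_{n+1}$ is an isomorphism for $n$ large, which is equivalent to stabilization of the set of connected components, i.e.\ Theorem~\ref{stablebranched}; the constant $\tilde{A}(R)$ is defined precisely so that this holds for $n \geq \tilde{A}(R)$.

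For the inductive step, observe that $U\colon M(q)_n \to M(q)_{n+1}$ being an isomorphism is equivalent to the vanishing, in degree $n$, of both the kernel and the cokernel of multiplication by $U$. Since multiplication by $U$ fits into the $2$-term complex $R \xrightarrow{\cdot U} R$, these vanishings are controlled by the graded Tor-groups $\Tor_0^R(R/UR, M(q))$ and $\Tor_1^R(R/UR, M(q))$, whose degrees we must bound. To compute these geometrically, I would associate to the reference surface $\Sigma_{n,1}$ its semi-simplicial set of tethered chains, carrying a natural action of $\Gamma^1_n$; its geometric realization is highly connected, with connectivity linear in $n$ and a slope that will ultimately account for the factor $8$ in the bound. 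The associated equivariant spectral sequence has $E^1$-page built from $H_{q'}(\Gamma^1_{n-p-1}, \cdot)$-terms that, after identifying stabilizer coefficients via Shapiro's lemma, become $M(q')_{n-p-1}$-type expressions multiplied by Hurwitz classes in $R$; its differentials encode the Koszul relations implementing multiplication by $U$, and its abutment vanishes in a linear range.

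The inductive hypothesis, applied term-by-term on the $E^1$-page, reduces each lower-$q'$ contribution to a controlled error supported in a bounded range, while the $R$-module regularity constant $A(R)$ bounds the degrees of the auxiliary Tor-groups $\Tor_p^R(R/UR, R)$. Combining connectivity, inductive hypothesis, and regularity yields the explicit bound $n \geq (8\tilde{A}(R) + \deg(U))q + \tilde{A}(R) + 6A(R) + 2$: the multiplicative coefficient in $q$ propagates through each inductive step via the slope of the connectivity estimate, while the additive constants absorb both the base case $q=0$ and the finite-range error terms arising from the low-$p$ columns of the spectral sequence.

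The main obstacle will be the passage from field coefficients to $\ZZ$. Over a field the relevant Tor-vanishings would follow from a clean Koszul-type degeneration, but integrally one must track torsion contributions through every page of every spectral sequence. This forces us to prove integral refinements of the homological-algebra lemmas of~\cite{e-v-w}, and it is the reason $\tilde{A}(R)$ is introduced as a constant distinct from $A(R)$: the former measures integral stabilization thresholds, the latter a homological regularity on the nose. It is precisely in order to gain enough connectivity to annihilate the torsion corrections, which a field-theoretic argument would ignore, that we replace the classical arc complex by the tethered chain complex of Hatcher--Vogtmann.
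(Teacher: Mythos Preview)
Your overall strategy matches the paper's: induction on $q$, with the engine being the equivariant spectral sequence for the tethered-chain complex, whose $E^1$-page is identified via Shapiro's lemma with the Koszul-like complex $\mathcal{K}(M(q))$. However, there is a genuine gap in the inductive architecture. The inductive hypothesis you implicitly use---that $U$ is an isomorphism on $M(q')_n$ for $n$ large and $q'<q$---is too weak to close the loop. What is actually needed (and what the paper proves as its inductive claim, inequality~\eqref{hpMq}) is a bound on \emph{all} the $\mathcal{K}$-complex homology degrees: $h_p(M(q')) \le Aq' + Bp + C$ for every $p\ge 0$, with $A=8\tilde{A}(R)+\deg(U)$, $B=\tilde{A}(R)$, $C=A(R)+1$. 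The reason is that in the spectral sequence one has $E^2_{j,q'-j}(n)\cong H_{j+1}(\mathcal{K}(M(q'-j)))_n$ for $j\ge 1$, and to force $M(q')_n = E^\infty_{0,q'}(n) = E^2_{0,q'}(n)$ one must kill all of these simultaneously, as well as the incoming differentials from $E^r_{r,q'-r+1}$; this requires controlling $h_{j+1}(M(q'-j))$ for all $j$, not merely $A(M(q'-j))$. Once that vanishing is established, the edge map shows the first two stages of $\mathcal{K}(M(q'))$ are exact, i.e.\ $h_0(M(q')),\,h_1(M(q'))$ are bounded; but to recover the full inductive claim for $M(q')$ one still needs a separate homological-algebra bootstrap (Theorem~\ref{HAM}, part~1) passing from $h_0,h_1$ to all $h_p$. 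You allude to Koszul relations but do not isolate either of these two steps, and without them the induction does not propagate.

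Two smaller corrections. Your account of $A(R)$ versus $\tilde{A}(R)$ is not the paper's: both are defined over $\ZZ$ throughout, and $\tilde{A}(R)$ simply adjoins $\deg(U)$ to $A(R)=\max\{\deg R[U],\,\deg(R/UR)\}$; it is not an ``integral versus field'' distinction. And the reason for tethered chains over Harer's arc complex is not primarily to absorb torsion, but that the simplex-stabilizers are literally $\Gamma^1_{n-p-1}$ acting trivially on the first $2p+2$ coordinates, so that the $d^1$ differential coincides on the nose with the $\mathcal{K}$-complex differential (Proposition~\ref{d^1}); this exact identification $E^2_{p-1,q}(n)=H_p(\mathcal{K}(M(q)))_n$ is what makes the argument run.
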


The statement of  the above theorem is exactly the stabilization statement for homology. 

\bigskip

In section two we recall known results on the simplicial complex of tethered chains introduced by 
\cite{h-v} and deal with the basic spectral sequences on which our results are relying.

In section three we  
generalize the theory of Koszul like complexes introduced by \cite{e-v-w}
considering $\ZZ$-modules instead of vector spaces, and applying the theory to the
ring of connected components of the moduli spaces of curves with $G$-symmetry.
The investigation of the homological algebra of this ring and related results are
our key tools.

Section four contains the main result.
\bigskip

\subsection{Notation}
Modules are understood to be left modules, unless the contrary is explicitly specified. 

Similarly for group actions, since every left action is equivalent to a right action, via the formula
$$ x g : = g^{-1} x,$$
in the case where there is only one natural action of the group $G$ on a set $S$,
we generally consider  left actions.
However,  we shall  use our customary notation  $S/G$ instead of $G \backslash S$ for the
orbit set, even if $G$ is acting on the left. 

One exception to this notation concerns the case where there are two different actions
of $G$ on $S$.  For instance the notation for the set of cosets of a subgroup $H$ in a group $G$
is the self explanatory one:
 $G/H$ denotes the set of left cosets of $H$, $\{ gH \, | \, g\in G \}$,
while $H\backslash G = \{ Hg \, | \, g\in G\}$ denotes the set of right cosets of $H$.

For any group and any pair of elements $x, y$ in it, we
define conjugation through the formula $x^y := y^{-1}xy$.


\section{Chain complexes and spectral sequences}\label{ccss}

A \textit{Hurwitz vector of type} $g'$ is a tuple  of the following form,
\[
 (a_1,b_1, a_2,b_2, \dots, a_{g'}, b_{g'}) \quad \mbox{where} \quad a_i, b_i \in G \, , 
\]
therefore  $G^{2g'}$ is also regarded as the set of such  vectors.
Let us  denote, from now on, $\Sigma^0_{g', 1}$ by $\Sigma_{g', 1}$ and let us 
fix a geometric basis $\{ \alpha_1, \beta_1, \ldots , \alpha_{g'} , \beta_{g'}\}$ 
of $\pi_1( \Sigma_{g', 1}, y_0)$, with $y_0 \in \partial  \Sigma_{g', 1}$. 
Let us identify $\operatorname{Hom}(\pi_1( \Sigma_{g', 1},y_0), G)$ with $G^{2g'}$ as usual:
$\mu \in \operatorname{Hom}(\pi_1( \Sigma_{g', 1},y_0), G)$ corresponds to 
\[
(a_1,b_1,  \dots , a_{g'}, b_{g'}) := (\mu (\alpha_1), \mu(\beta_1), \ldots , \mu (\alpha_{g'}), \mu (\beta_{g'})) \, .
\]
The mapping class group  $\Gamma_{g', 1}:= \Gamma^0_{g', 1}$ 
acts naturally on $\pi_1( \Sigma_{g', 1}, y_0)$, and hence on $G^{2g'}$,
by means of the previous identification. More precisely, for $\gamma \in  \Gamma_{g', 1}$ and 
$\mu \in \operatorname{Hom}(\pi_1( \Sigma_{g', 1},y_0), G)$, 
\[
\gamma \cdot \mu :=   \mu \circ \pi_1(\ga^{-1}).
\]
Notice that this is a left-action.

\begin{rem}

In general, one  may replace the fundamental group $\pi_1(X, y_0)$ with the fundamental group $\pi_1(X, I)$ relative
to a closed, connected and simply connected subspace $I \subset X$ containing $y_0$.

In our situation, to avoid cumbersome and useless verifications of associativity of glueing handles,
we replace  $\pi_1(\Sigma_{g', 1}, y_0)$ by $\pi_1(\Sigma_{g', 1}, I_{\partial})$,
where  $I_{\partial}$ is a segment on the boundary. This segment is coloured in red in the Figures \ref{tethered} and \ref{simplex}.
\end{rem}

\subsection{The simplicial complex of tethered chains}\label{tch complex}
We recall here some definitions from \cite{h-v}.
A \textit{chain} in $ \Sigma_{g', 1}$ is an ordered pair $(\hat{\alpha}, \hat{\beta})$ of simple closed curves
intersecting transversely in a single point, together with an orientation on $\hat{\beta}$.  
A \textit{tethered chain} in $ \Sigma_{g', 1}$ is a pair $\left((\hat{\alpha}, \hat{\beta}), t \right)$, where $(\hat{\alpha}, \hat{\beta})$
 is a chain 
and $t$ is an arc connecting the positive side of $\hat{\beta}$ with a point in some finite collection $P$ of disjoint open intervals
(not circles) in $\partial  \Sigma_{g', 1}$, such that the interior of $t$ is disjoint from $\hat{\alpha}$, 
$\hat{\beta}$ and $\partial  \Sigma_{g', 1}$.
In our situation $P$  always consists of one interval containing the base point $y_0$.

The \textit{complex of tethered chains on  $ \Sigma_{g', 1}$}, $ \tch_{g', 1}$, is the simplicial complex 
whose vertices correspond to isotopy classes of tethered chains, and a set of vertices spans a simplex 
if the tethered chains can be chosen to be disjoint.

\bigskip

\begin{figure}

\begin{tikzpicture}[scale=0.85]


\foreach \x in {0,5,9,13}
{
\draw [very thick, dashed] 
 (-1.6+\x,2.5)  .. controls ++(85:-1) and ++(0:.1) .. (-2+\x,1);
\draw [very thick] 
 (-1.6+\x,2.51)  .. controls ++(85:-.2) and ++(82:.2) .. (-1.66+\x,2);
\draw [very thick] 
 (-2+\x,1) .. controls ++(0:-.1) and ++(-75:1) .. (-2.5+\x,2);
 

\draw [very thick] (-1.6+\x,2.5) -- (2.4+\x,2.5);

\draw [very thick] 
 (-2+\x,1)  .. controls ++(-5:2) and ++(0:-2.7) .. (0+\x,-2.9)
  .. controls ++(0:2.7) and ++(-5:-2) .. (2+\x,1);

\draw [very thick] 
 (-0.1+\x,0.1) .. controls ++(0:.2) and ++(90:.7) .. (.5+\x,-1)
 .. controls ++(90:-.7) and ++(0:.2) .. (-0.1+\x,-2.1);

\draw [very thick] 
 (0+\x,0.07) .. controls ++(0:-.2) and ++(90:.7) .. (-.5+\x,-1)
 .. controls ++(90:-.7) and ++(0:-.2) .. (0+\x,-2.07);
 
\draw [blue] (0+\x,-.9) ellipse (.9 and 1.6);

\draw [blue]   
 (0.4165+\x,-0.5) .. controls ++(108:-.2) and ++(100:-.2) .. (1.22+\x,-.3);
\draw [blue, dashed]   
 (0.4165+\x,-0.5) .. controls ++(108:.2) and ++(100:.2) .. (1.22+\x,-.3);

\draw [blue]   
 (-0.4+\x,2) .. controls ++(100:-.2) and ++(120:.2) .. (0+\x,.7);

\draw [red, very thick] (-2.51+\x,2) -- (1.51+\x,2);
};

\foreach \x in {0,13}
\draw [very thick] 
 (2.4+\x,2.51)  .. controls ++(85:-1) and ++(0:.1) .. (2+\x,1)
  .. controls ++(0:-.1) and ++(-75:1) .. (1.5+\x,2);

\end{tikzpicture}
\caption{Attaching one handle endowed with a tethered chain.
The base point for the fundamental group is any point on the red segment,
equivalently, we take paths with endpoints in the segment.}\label{tethered}
\end{figure}
\bigskip

\begin{figure}

\begin{tikzpicture}[scale=0.85]


\foreach \x in {0,4,8,12}
{
\draw [very thick, dashed] 
 (-1.6+\x,2.5)  .. controls ++(85:-1) and ++(0:.1) .. (-2+\x,1);
\draw [very thick] 
 (-1.6+\x,2.51)  .. controls ++(85:-.2) and ++(82:.2) .. (-1.66+\x,2);
\draw [very thick] 
 (-2+\x,1) .. controls ++(0:-.1) and ++(-75:1) .. (-2.5+\x,2);
 

\draw [very thick] (-1.6+\x,2.5) -- (2.4+\x,2.5);

\draw [very thick] 
 (-2+\x,1)  .. controls ++(-5:2) and ++(0:-2.7) .. (0+\x,-2.9)
  .. controls ++(0:2.7) and ++(-5:-2) .. (2+\x,1);

\draw [very thick] 
 (-0.1+\x,0.1) .. controls ++(0:.2) and ++(90:.7) .. (.5+\x,-1)
 .. controls ++(90:-.7) and ++(0:.2) .. (-0.1+\x,-2.1);

\draw [very thick] 
 (0+\x,0.07) .. controls ++(0:-.2) and ++(90:.7) .. (-.5+\x,-1)
 .. controls ++(90:-.7) and ++(0:-.2) .. (0+\x,-2.07);
 
\draw [blue] (0+\x,-.9) ellipse (.9 and 1.6);

\draw [blue]   
 (0.4165+\x,-0.5) .. controls ++(108:-.2) and ++(100:-.2) .. (1.22+\x,-.3);
\draw [blue, dashed]   
 (0.4165+\x,-0.5) .. controls ++(108:.2) and ++(100:.2) .. (1.22+\x,-.3);

\draw [blue]   
 (-0.4+\x,2) .. controls ++(100:-.2) and ++(120:.2) .. (0+\x,.7);

\draw [red, very thick] (-2.51+\x,2) -- (1.51+\x,2);
};

\foreach \x in {12}
\draw [very thick] 
 (2.4+\x,2.51)  .. controls ++(85:-1) and ++(0:.1) .. (2+\x,1)
  .. controls ++(0:-.1) and ++(-75:1) .. (1.5+\x,2);

\end{tikzpicture}
\caption{This collection of tethered chains yields a 3-simplex in the Hatcher-Vogtman complex,
it determines a Riemann surface with boundary and a canonical isomorphism of its   fundamental group with a free group of rank 8.}
\label{simplex}
\end{figure}
\bigskip

\begin{rem}
We explain now the assertion made in Figure \ref{simplex}, see especially Figure \ref{basis}.

Each handle has a canonical orientation, hence the path joining the point on the red segment with the 
meridian circle is oriented, and we may proceed with a right turn to reach
the point of intersections of the two circles, the meridian and the parallel, which are both naturally oriented.
This determines a canonical free basis of the fundamental group of each handle.

After that, we apply the first van Kampen Theorem to derive the canonical structure of a free product for the union of the several handles.

\end{rem}

\begin{figure}
\begin{tikzpicture}[scale=1.2]


\draw [very thick, dashed] 
 (-1.6,2.5)  .. controls ++(85:-1) and ++(0:.1) .. (-2,1);
\draw [very thick] 
 (-1.6,2.51)  .. controls ++(85:-.2) and ++(82:.2) .. (-1.66,2);
\draw [very thick] 
 (-2,1) .. controls ++(0:-.1) and ++(-75:1) .. (-2.5,2);
 
\draw [very thick] 
 (2.4,2.51)  .. controls ++(85:-1) and ++(0:.1) .. (2,1)
  .. controls ++(0:-.1) and ++(-75:1) .. (1.5,2);
 
\draw [cyan, very thick] 
 (-0.5,2) 
 .. controls ++(105:-1) and ++(45:.2) .. 
 (-0.3,.7)
 .. controls ++(45:-1.3) and ++(0:-1.1) .. 
(0,-2.5)
 .. controls ++(0:1.1) and ++(-38:1.2) .. 
(0.34,.6)
 .. controls ++(-38:-.5) and ++(106:-1) .. 
(-0.3,2);

\draw [purple, very thick] 
 (-0.4,2) 
 .. controls ++(105:-.2) and ++(142:.4) .. 
 (0.12,.65)
 .. controls ++(142:-.65) and ++(102:.2) .. 
 (0.8,-.38)
 .. controls ++(102:-.2) and ++(108:-.1) .. 
 (0.4165,-0.5);

\draw [purple, very thick] 
 (-0.2,2) 
 .. controls ++(105:-.2) and ++(145:.4) .. 
 (0.32,.73)
 .. controls ++(145:-.6) and ++(102:.2) .. 
 (.95,-.35)
 .. controls ++(102:-.2) and ++(100:-.1) .. 
(1.22,-.3);

\draw [purple, dashed, very thick]   
 (0.4165,-0.5) .. controls ++(108:.2) and ++(100:.2) .. (1.22,-.3);

\draw [cyan, very thick]
(-.5,1.8) -- (-.395,1.62);
\draw [cyan, very thick]
(-.41,1.825) -- (-.395,1.62);

\draw [purple, very thick]
(-.215,1.8) -- (-.11,1.62);
\draw [purple, very thick]
(-.115,1.825) -- (-.11,1.62);

\draw [very thick] (-1.6,2.5) -- (2.4,2.5);

\draw [very thick] 
 (-2,1)  .. controls ++(-5:2) and ++(0:-2.7) .. (0,-2.9)
  .. controls ++(0:2.7) and ++(-5:-2) .. (2,1);

\draw [very thick] 
 (-0.1,0.1) .. controls ++(0:.2) and ++(90:.7) .. (.5,-1)
 .. controls ++(90:-.7) and ++(0:.2) .. (-0.1,-2.1);

\draw [very thick] 
 (0,0.07) .. controls ++(0:-.2) and ++(90:.7) .. (-.5,-1)
 .. controls ++(90:-.7) and ++(0:-.2) .. (0,-2.07);
 


\draw [red, very thick] (-2.51,2) -- (1.51,2);

\end{tikzpicture}
\caption{
If $a$ is the path on the left, and $b$ that on the right,
both with the orientation given by the arrow, then
they form a free basis of the fundamental group with respect
to the base segment
and the boundary oriented with the surface on the left is 
homotopic to the commutator
\[
a b a^{-1} b^{-1}
\]
}
\label{basis}
\end{figure}

We refer to  \cite{h-v} for the main properties and details about these notions, 
but let us recall here that $ \tch_{g', 1}$ is $\frac{1}{2}( {g'}-3 )$-connected (Proposition 5.5, \textit{loc. cit.}).
Note also that $ \Gamma_{g', 1}$ acts simplicially on $ \tch_{g', 1}$.

\begin{defin}\label{G-marked tch}
The complex of $G$-marked tethered chains is the simplicial complex defined as the Cartesian product
\[
X_{g'} :=  \tch_{g', 1} \times G^{2{g'}} \, .
\]
It will be considered with the diagonal left action of $ \Gamma_{g', 1}$. 
\end{defin}

\begin{notation}\label{tch_geobasis}\normalfont
Let us first remark that, for a tethered chain $((\hat{\alpha}, \hat{\beta}), t)$, the orientation on $\hat{\beta}$ is  determined by $t$, 
because $t$ is allowed to intersect $\hat{\beta}$ only from its positive side.
Moreover, the geometric basis $\{ \alpha_1, \beta_1, \ldots , \alpha_{g'} , \beta_{g'}\}$ 
of $\pi_1( \Sigma_{g', 1}, y_0)$ determines ${g'}$ vertices of $ \tch_{g', 1}$, as we explain now. 
For every $i=1, \ldots , {g'}$, we choose a representative of $\alpha_i$ (resp. $\beta_i$) consisting of an arc
$t_i$ joining $y_0$ with the intersection point of a chain $(\hat{\alpha}_i, \hat{\beta}_i)$, then it goes through 
$\hat{\alpha}_i$ (resp. $\hat{\beta}_i$), and comes back to $y_0$ along $t_i$.  The isotopy class of 
$((\hat{\alpha}_i, \hat{\beta}_i), t_i)$
is a vertex of $ \tch_{g', 1}$ and will be denoted by $(\alpha_i, \beta_i)$. 
Finally, for every $p =0, \ldots , {g'}-1$, we denote by $\sigma_p$ the $p$-simplex spanned by 
$(\alpha_1, \beta_1), \ldots , (\alpha_{p+1} , \beta_{p+1})$.

We will denote by ${\rm St}(\sigma_p)$ the stabilizer of $\sigma_p$ in $ \Gamma_{g', 1}$, which, as observed in \cite{h-v},
is isomorphic to $ \Gamma_{{g'}-p-1, 1}$. Moreover, since 
${\rm St}(\sigma_p) \subset {\rm St}(\sigma_{p-1})$, by means of the previous isomorphism 
we have the following inclusion:
\[
 \Gamma_{{g'}-p-1, 1} \subset \Gamma_{{g'}-p, 1} \, .
\]
\end{notation}

\subsection{The spectral sequences}\label{the spectral sequences}
In this subsection we introduce two spectral sequences, $\tilde{E}_{p,q}^r ({g'})$ and $E^r_{p,q} ({g'})$,
 here the superscript $r$ refers to the page of the spectral sequence,
both converging to  the equivariant homology groups $H_{p+q}^{ \Gamma_{g', 1}}(G^{2{g'}})$, if $p+q \leq \frac{1}{2}({g'}-3)$, 
and we compute the differential $d^1$ of the second one, $E^1_{p, q}({g'})$. 
When there is no danger of confusion, to have a less heavy  notation, we will denote $\tilde{E}_{p,q}^r ({g'})$ and $E^r_{p,q} ({g'})$
by $\tilde{E}_{p,q}^r $ and $E^r_{p,q} $, respectively.

We refer to \cite{Brown} for the basics on  equivariant homology and on homology of groups.

For fixed ${g'}$, let  $F_\bullet =(F_p)_{p\geq 0}$ be a free resolution of $\ZZ$  in the category of right 
$\ZZ \Gamma_{g', 1}$-modules,
where $\ZZ \Gamma_{g', 1}$ is the group ring of $ \Gamma_{g', 1}$ and $\ZZ$ is the trivial 
$\ZZ \Gamma_{g', 1}$-module.
Let  $S_\bullet =(S_q:= S_q(X_{g'}))_{q\geq 0}$ be the chain complex of $X_{g'}$ (defined in Def. \ref{G-marked tch}) with integer coefficients.
Then we consider the double complex $C_{\bullet , \bullet}=(C_{p,q})_{p, q \geq 0}$, where
\[
C_{p,q} := F_p \otimes_{\ZZ \Gamma_{{g'}, 1}} S_q \, , \quad p, q \geq 0 \, ,
\]
with the horizontal differential $\partial '$ of degree $(-1, 0)$ (induced by the differential of $F_\bullet$) 
and the vertical one $\partial ''$ of degree $(0, -1)$ (induced by the differential of $S_\bullet$).
The total complex associated to $C_{\bullet , \bullet}$ is defined as $T_\bullet =(T_m)_{m\geq 0}$, where
\[
T_m := \oplus_{p+q=m} C_{p, q} \, , \quad m \geq 0 \, ,
\]
with differential $\partial$ given by $\partial_{| C_{p,q}}:= \partial ' + (-1)^p \partial ''$.

We  filter $T_\bullet$ in two ways, by setting 
\[
\tilde{\Phi}_p (T_m) : = \oplus_{i\leq p} C_{i, m-i} \qquad \mbox{and}
\qquad \Phi_p(T_m) := \oplus_{j\leq p} C_{m-j, j} \, .
\]
These filtrations yield  spectral sequences $\{ \tilde{E}^r({g'}) \}$ and $\{ E^r({g'}) \}$, respectively,
both converging to $H(T_\bullet)$.
For the first one we have that 
\[
\tilde{E}^0_{p, q}({g'}) = \tilde{\Phi}_p (T_{p+q})/\tilde{\Phi}_{p-1} (T_{p+q}) = C_{p, q}
\]
with differential $\tilde{d}^0=\pm \partial ''$, hence 
\[
\tilde{E}^1_{p, q} ({g'}) = H_q(C_{p, \bullet}) = 
\begin{cases}
0 & \mbox{for} \quad 0<q\leq \frac{1}{2}({g'}-3) \, , \\
F_p \otimes_{\ZZ  \Gamma_{g', 1}} \ZZ \langle G^{2{g'}} \rangle & \mbox{for} \quad q=0 \, ,
\end{cases}
\]
by \cite[Prop. 5.5]{h-v}. The differential $\tilde{d}^1 \colon \tilde{E}^1_{p,q} ({g'}) \to \tilde{E}^1_{p-1,q}({g'})$
is induced by $\partial ' \colon C_{p, \bullet} \to C_{p-1, \bullet}$, so
\[
\tilde{E}^2_{p, q} ({g'}) = 
\begin{cases}
0 & \mbox{for} \quad 0<q\leq \frac{1}{2}({g'}-3) \, , \\
H_p^{ \Gamma_{g', 1}} ( G^{2{g'}} )  & \mbox{for} \quad q=0 \, .
\end{cases}
\]
We conclude that
\[
\tilde{E}^2_{p, q} ({g'}) = \tilde{E}^\infty_{p, q} ({g'}) \qquad \mbox{and} \qquad H_{p+q}(T_\bullet)=
H_{p+q}^{ \Gamma_{g', 1}} (G^{2{g'}}) \, , 
\qquad \mbox{for} \quad p+q \leq \frac{1}{2}({g'}-3) \, .
\]

For the second spectral sequence we have that
\[
{E}^0_{p, q} ({g'}) = {\Phi}_p (T_{p+q})/{\Phi}_{p-1} (T_{p+q}) = C_{q, p}
\]
and
\[
E^1_{p, q} ({g'}) = H_q(C_{\bullet, p}) = H_q( \Gamma_{g', 1} , S_p(X_{g'})) \, ,
\]
where the last group is the homology of $ \Gamma_{g', 1}$ with coefficients in the module $S_p(X_{g'})$.
The differential $d^1 \colon E^1_{p, q} ({g'}) \to E^1_{p-1, q}({g'})$ is induced  (up to sign) by 
$\partial '' \colon C_{\bullet, p} \to C_{\bullet, p-1}$.


In the remaining part of the section we  give an explicit expression for $d^1$ (see Proposition \ref{d^1} below).
To this aim we need some preliminary observations.

\begin{lemma}
For every $p\geq 0$, $ \Gamma_{g', 1}$ acts transitively on the set of $p$-simplices of $ \tch_{g', 1}$.
\end{lemma}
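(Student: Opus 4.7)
The plan is to establish transitivity by induction on $p$, with the classical change-of-coordinates principle for mapping class groups doing most of the work. For the base case $p=0$, I must show that any two tethered chains $((\hat{\alpha}, \hat{\beta}), t)$ and $((\hat{\alpha}', \hat{\beta}'), t')$ in $\Sigma_n^1$ are related by an element of $\Gamma^1_n$. First I would observe that a regular neighborhood of a tethered chain, extended by a collar of the portion of $P$ containing its tether endpoint, is a compact orientable surface of genus one with a single boundary component meeting $\partial \Sigma_n^1$ in one arc contained in $P$; its complement in $\Sigma_n^1$ is therefore homeomorphic to $\Sigma^1_{n-1}$. By the classification of compact surfaces there exists an orientation-preserving homeomorphism between the two regular neighborhoods sending tethered chain to tethered chain (the orientation on $\hat{\beta}$ is automatically matched, being determined by the tether, as recalled in Notation \ref{tch_geobasis}), and likewise between the two complementary copies of $\Sigma^1_{n-1}$; after an isotopy on the boundary these can be glued to an orientation-preserving homeomorphism of $\Sigma_n^1$ fixing $\partial \Sigma_n^1$ outside a neighborhood of the relevant arc, hence to a representative in $\Gamma^1_n$.

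For the inductive step, assume transitivity on $(p-1)$-simplices, and let $\sigma$, $\sigma'$ be two $p$-simplices. Applying the inductive hypothesis to the $(p-1)$-faces obtained by dropping a chosen vertex from each, I may, after acting by an element of $\Gamma^1_n$, assume that $\sigma$ and $\sigma'$ share a common $(p-1)$-face $\tau$. By Notation \ref{tch_geobasis} the stabilizer ${\rm St}(\tau) \subset \Gamma^1_n$ is isomorphic to $\Gamma^1_{n-p}$ and acts naturally on the complementary subsurface $\Sigma^1_{n-p}$ obtained by cutting $\Sigma_n^1$ open along a regular neighborhood of $\tau$. The vertices $v$ and $v'$ of $\sigma$ and $\sigma'$ lying outside $\tau$ are disjoint from $\tau$ and hence, after isotopy, can be viewed as vertices of $\tch_{n-p}^1$; applying the base case inside $\Sigma^1_{n-p}$ yields an element of ${\rm St}(\tau)$ carrying $v$ to $v'$, and composing the two mapping classes sends $\sigma$ to $\sigma'$.

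The main technical subtlety is the last reduction in the inductive step: although $v$ and $v'$ are disjoint from $\tau$ as subcomplexes, their tethers emanate from the same interval of $\partial \Sigma_n^1$ as those of the chains spanning $\tau$, so one must verify that, after cutting along a regular neighborhood of $\tau$, the tethers of $v$ and $v'$ can be arranged to land in a prescribed arc of the residual boundary of $\Sigma^1_{n-p}$ and thus define genuine vertices of $\tch^1_{n-p}$. This is the bookkeeping that underlies the identification ${\rm St}(\tau) \cong \Gamma^1_{n-p}$ recalled in Notation \ref{tch_geobasis} and is the step where care is needed; once it is in place, the change-of-coordinates principle in $\Sigma^1_{n-p}$ closes the argument. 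An alternative, equally serviceable route avoiding the induction would be to apply the change-of-coordinates principle in one stroke to a regular neighborhood of the whole $p$-simplex, which is homeomorphic to $\Sigma^1_{p+1}$ with a prescribed arc in its boundary on $\partial \Sigma_n^1$.
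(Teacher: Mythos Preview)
Your proof is correct, but the paper takes a shorter, non-inductive route that is essentially the ``alternative'' you mention at the end. Rather than cutting and gluing, the paper passes directly through the fundamental group: from each $p$-simplex it extracts, via the construction in Notation~\ref{tch_geobasis}, elements $\alpha_1,\beta_1,\dots,\alpha_{p+1},\beta_{p+1}\in\pi_1(\Sigma_n^1,y_0)$ and observes that the disjointness condition defining a simplex of $\tch_n^1$ forces these to form part of a geometric basis. It then invokes the Dehn--Nielsen--Baer theorem (equivalently, the change-of-coordinates principle) once to obtain a mapping class carrying one partial geometric basis to the other, hence one simplex to the other. Your inductive argument reaches the same conclusion with more bookkeeping; the subtlety you correctly flag about tethers landing in the right boundary arc after cutting is precisely what underlies the identification ${\rm St}(\tau)\cong\Gamma^1_{n-p}$ that the paper imports from \cite{h-v}, so in the end both approaches rest on the same ingredient. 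The paper's version is quicker; yours has the minor expository virtue of isolating the genuine surface-topology input in the base case.
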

\begin{proof}
Let $\sigma$ and $\sigma'$ be two $p$-simplices, and let  
$((\hat{\alpha}_i, \hat{\beta}_i), t_i)$ (respectively $((\hat{\alpha}_i', \hat{\beta}_i'), t_i')$), $i=1, \ldots , p+1$,
classes that span $\sigma$ (respectively $\sigma'$).
As argued in Notation \ref{tch_geobasis}, we can associate to every tethered chain $((\hat{\alpha}_i, \hat{\beta}_i), t_i)$
two elements $\alpha_i , \beta_i \in \pi_1( \Sigma_{g', 1}, y_0)$ (respectively $\alpha_i' , \beta_i'$).
By definition of $ \tch_{g', 1}$, both sets $$\alpha_1 , \beta_1, \ldots , \alpha_{p+1}, \beta_{p+1}, {\rm and } \ 
\alpha_1' , \beta_1', \ldots , \alpha_{p+1}', \beta_{p+1}'$$ are parts of two geometric bases of $\pi_1( \Sigma_{g', 1}, y_0)$
and hence the Dehn-Nielsen-Baer theorem implies that there is a mapping class that sends 
$(\alpha_i , \beta_i)$ to $(\alpha_i' , \beta_i')$, for every $i=1, \ldots , p+1$.
This mapping class clearly maps  the class of $((\hat{\alpha}_i, \hat{\beta}_i), t_i)$ to the one of
$((\hat{\alpha}_i', \hat{\beta}_i'), t_i')$, for every $i$.

\end{proof}

This Lemma implies that,   as a $\ZZ  \Gamma_{g', 1}$-module
$$
S_p \left( X_{g'} \right) = S_p \left(  \tch_{g', 1} \times G^{2{g'}} \right) \cong {\rm Ind}^{\Gamma_{g', 1}}_{{\rm St}(\sigma_p)} 
\ZZ \langle G^{2{g'}} \rangle \, 
$$
 (\cite[Ch. III (5.4)]{Brown}), where ${\rm St}(\sigma_p)$
is the stabilizer of $\sigma_p$ (see Notation \ref{tch_geobasis}) in $ \Gamma_{g', 1}$, and therefore 
applying Shapiro's lemma we conclude that 
$$
E^1_{p, q} ({g'}) = H_q( \Gamma_{{g'}, 1}, S_p(X_{g'})) \cong H_q({\rm St}(\sigma_p), \ZZ \langle G^{2{g'}} \rangle) \, . 
$$
Now, since ${\rm St}(\sigma_p)$ doesn't act on the first $2p+2$ factors of $\ZZ \langle G^{2{g'}} \rangle$
and  ${\rm St}(\sigma_p)\cong  \Gamma_{{g'}-p-1, 1}$ \cite{h-v}, we have the following isomorphism:
$$
E^1_{p,q} ({g'}) \cong \ZZ \langle G^{2p+2}\rangle \otimes H_q( \Gamma_{{g'}-p-1, 1}, \ZZ \langle G^{2{g'}-2p-2}\rangle) \, ,
$$
where the tensor product is understood over $\ZZ$ if not otherwise specified.
The differential $d^1$ is then a homomorphism
$$
d^1 \colon \ZZ \langle G^{2p+2}\rangle \otimes H_q( \Gamma_{{g'}-p-1, 1}, \ZZ \langle G^{2{g'}-2p-2}\rangle)
\to \ZZ \langle G^{2p}\rangle \otimes H_q( \Gamma_{{g'}-p, 1}, \ZZ \langle G^{2{g'}-2p}\rangle) \, ,
$$
where $ \Gamma_{{g'}-p-1, 1}$ is considered a subgroup of  $ \Gamma_{{g'}-p, 1}$ as explained in Notation \ref{tch_geobasis}.

Note that, for any $(a, b) \in G^2$, the homomorphism
$\ZZ \langle G^{2{g'}-2p-2}\rangle \to \ZZ \langle G^{2{g'}-2p}\rangle$ defined by 
$$
(a_{p+2}, b_{p+2}, \ldots , a_{g'}, b_{g'}) \mapsto (a, b, a_{p+2}, b_{p+2}, \ldots , a_{g'}, b_{g'})
$$
is equivariant with respect to the inclusion $ \Gamma_{{g'}-p-1, 1} \subset \Gamma_{{g'}-p, 1}$, therefore it induces 
a homomorphism in homology, which we denote as follows:
\begin{eqnarray*}
H_*( \Gamma_{{g'}-p-1, 1}, \ZZ \langle G^{2{g'}-2p-2}\rangle) &\to& 
H_*( \Gamma_{{g'}-p, 1}, \ZZ \langle G^{2{g'}-2p}\rangle)\\
m &\mapsto& (a,b)m \, .
\end{eqnarray*}

\begin{prop}\label{d^1}
1. For $p>0$ and $p+q\leq \frac{1}{2}({g'}-3)$,
the differential $d^1$ of the second spectral sequence $E^1_{p,q}({g'})$ is given (up to  sign) by 
the following expression:
\[
d^1 \colon \ZZ \langle G^{2p+2}\rangle \otimes H_q( \Gamma_{{g'}-p-1, 1}, \ZZ \langle G^{2{g'}-2p-2}\rangle)
\to \ZZ \langle G^{2p}\rangle \otimes H_q( \Gamma_{{g'}-p, 1}, \ZZ \langle G^{2{g'}-2p}\rangle) 
\]
\[
(a_1, b_1, \ldots , a_{p+1}, b_{p+1}) \otimes m  \mapsto  
\sum_{k=1}^{p+1}(-1)^{k-1} (a_1, b_1, \ldots , \widehat{a_k, b_k}, \ldots , a_{p+1}, b_{p+1}) \otimes
(a_k^{d_k}, b_k^{d_k})m \, ,
\]
where $d_k := [a_{k+1}, b_{k+1}] \cdot  \ldots \cdot [a_{p+1}, b_{p+1}] $, and $x^y := y^{-1}xy$, for every  $x, y \in G$.

2. 
For $p=0$ and $q\leq \frac{1}{2}({g'}-3)$, the above expression  yields the edge map 
$$
e_q\colon E^1_{0,q} ({g'}) \to H_q( \Gamma_{g', 1}, \ZZ \langle G^{2{g'}}\rangle).
$$ 
\end{prop}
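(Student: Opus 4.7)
The plan is to read off $d^1$ from the simplicial boundary of $\sigma_p$ in $\tch_n^1$, translated to the Shapiro-type model used in the identification of $E^1_{p,q}(n)$. Since $\partial''$ acts only on the $S_\bullet(\tch_n^1)$ factor of $S_\bullet(X_n)=S_\bullet(\tch_n^1)\otimes\ZZ\langle G^{2n}\rangle$, and since
\[
\partial\sigma_p \;=\; \sum_{k=1}^{p+1}(-1)^{k-1}\tau_k,
\]
where $\tau_k$ is the $(p-1)$-face spanned by the tethered chains $(\alpha_j,\beta_j)$ with $j\neq k$, the transitivity lemma lets us pick $\gamma_k\in\Gamma_n^1$ with $\gamma_k\cdot\sigma_{p-1}=\tau_k$. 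Under Shapiro's isomorphism, each summand $(\gamma_k\cdot\sigma_{p-1})\otimes w$ is identified in the coinvariants with $\sigma_{p-1}\otimes(\gamma_k^{-1}\cdot w)$, so the task reduces to computing the action of $\gamma_k^{-1}$ on the Hurwitz vector $w=(a_1,b_1,\ldots,a_n,b_n)$.

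A natural choice of $\gamma_k$ is the mapping class supported in a regular neighborhood of the chains indexed by $k,k+1,\ldots,p+1$ which slides the $k$-th chain past the chains $k+1,\ldots,p+1$. The standard handle-braid argument (one slide past a handle conjugates by that handle's commutator) suggests that $\gamma_k$ should act on the geometric basis by $(\alpha_j,\beta_j)\mapsto(\alpha_j,\beta_j)$ for $j<k$ or $j>p+1$, by $(\alpha_j,\beta_j)\mapsto(\alpha_{j+1},\beta_{j+1})$ for $k\leq j\leq p$, and by $(\alpha_{p+1},\beta_{p+1})\mapsto(\alpha_k^{D_k},\beta_k^{D_k})$, where $D_k:=[\alpha_{k+1},\beta_{k+1}]\cdots[\alpha_{p+1},\beta_{p+1}]\in \pi_1(\Sigma_n^1,y_0)$. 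To confirm this assignment defines an element of $\Gamma_n^1$ one invokes the Dehn-Nielsen-Baer theorem: it suffices to check that the boundary word $\prod_{j=1}^n[\alpha_j,\beta_j]$ is preserved, which follows from $[\alpha_k^{D_k},\beta_k^{D_k}]=D_k^{-1}[\alpha_k,\beta_k]D_k$ by a telescoping cancellation involving $D_k$ and its inverse.

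Using the convention $(\gamma\cdot\mu)(x)=\mu(\gamma^{-1}(x))$, the preceding description of $\gamma_k$ yields
\[
\gamma_k^{-1}\cdot(a_1,b_1,\ldots,a_n,b_n) \;=\; (a_1,b_1,\ldots,\widehat{a_k,b_k},\ldots,a_{p+1},b_{p+1},\;a_k^{d_k},b_k^{d_k},\;a_{p+2},b_{p+2},\ldots,a_n,b_n),
\]
with $d_k:=[a_{k+1},b_{k+1}]\cdots[a_{p+1},b_{p+1}]\in G$. Reading off the first $2p$ coordinates as the tethered part of $\sigma_{p-1}$ and the remainder as the class $(a_k^{d_k},b_k^{d_k})m\in H_q(\Gamma_{n-p}^1,\ZZ\langle G^{2n-2p}\rangle)$ produces exactly the formula in part 1. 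Part 2 is the case $p=0$: only $k=1$ occurs, $d_1$ is the empty product and $\gamma_1$ is the identity, and the resulting map $(a,b)\otimes m\mapsto(a,b)m$ is by construction the edge morphism at $p=0$. The delicate point, which is the principal obstacle, is the careful alignment of signs and of the direction of the $\Gamma_n^1$-action on $\operatorname{Hom}(\pi_1,G)$ so that the conjugation appears with the correct factor $d_k$ (rather than its inverse) and the commutators are multiplied in the stated order.
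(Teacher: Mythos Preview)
Your approach is essentially the paper's: compute $d^1$ from the simplicial boundary $\partial\sigma_p=\sum_k(-1)^{k-1}\sigma_p^k$, choose mapping classes carrying $\sigma_{p-1}$ to each face, and track their action on the Hurwitz vector. Your $\gamma_k$ differs from the paper's $\gamma_{p,k}$ (you conjugate by $D_k=[\alpha_{k+1},\beta_{k+1}]\cdots[\alpha_{p+1},\beta_{p+1}]$, the paper by $\delta_k^{-1}$ with $\delta_k=[\alpha_k,\beta_k]\cdots[\alpha_p,\beta_p]$), but both are legitimate choices and yield the same formula on $G^{2n}$.

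There is, however, a genuine gap at the sentence ``each summand $(\gamma_k\cdot\sigma_{p-1})\otimes w$ is identified in the coinvariants with $\sigma_{p-1}\otimes(\gamma_k^{-1}\cdot w)$, so the task reduces to computing the action of $\gamma_k^{-1}$ on the Hurwitz vector.'' This reduction is correct for $q=0$ (plain coinvariants), but for $q>0$ the element in $F_q\otimes_{\Gamma^1_n}S_{p-1}(X_n)$ is $x\gamma_k\otimes\sigma_{p-1}\otimes\gamma_k^{-1}w$, not $x\otimes\sigma_{p-1}\otimes\gamma_k^{-1}w$: the resolution variable is also moved. Passing to $H_q({\rm St}(\sigma_{p-1}),\ZZ\langle G^{2n}\rangle)$, one must still explain why the shift $x\mapsto x\gamma_k$ on $F_\bullet$ contributes nothing. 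The paper supplies precisely this missing step: since your $\gamma_k$ is supported in a neighbourhood of the chains $k,\ldots,p+1$, it commutes with every element of ${\rm St}(\sigma_p)\cong\Gamma^1_{n-p-1}$; hence $x\mapsto\gamma_k x$ is a well-defined self-map of $F_\bullet\otimes_{{\rm St}(\sigma_p)}\ZZ\langle G^{2n-2p-2}\rangle$ lifting the identity on $\ZZ$, so it is chain-homotopic to the identity and induces the identity on $H_q$. Only after invoking this can you legitimately drop the action on $F_\bullet$ and read off the formula from $\gamma_k^{-1}\cdot w$ alone. Your remark that $\gamma_k$ is ``supported in a regular neighborhood of the chains indexed by $k,k+1,\ldots,p+1$'' is exactly the geometric input needed, but you never use it for this purpose.

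For part~2, ``by construction the edge morphism'' is a bit quick; the paper verifies explicitly, via the filtration $\Phi$ on $T_\bullet$, that the edge map $E^1_{0,q}(n)\to H_q(T_\bullet)=H_q(\Gamma^1_n,\ZZ\langle G^{2n}\rangle)$ sends a $\partial'$-cycle to its class modulo $\partial'(C_{q+1,0})+\partial''(C_{q,1})$, which is exactly the $p=0$ case of the formula.
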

\begin{proof}
1. Let us recall that $d^1$ is induced by the boundary operator 
$\partial '' \colon F_\bullet \otimes_{\ZZ  \Gamma_{g', 1}} S_p  \to  F_\bullet \otimes_{\ZZ  \Gamma_{g', 1}} S_{p-1}$.
Since $ \Gamma_{g', 1}$ acts transitively on the $p$-simplices of $ \tch_{g', 1}$, as observed before we have that
$$
E^1_{p, q} ({g'}) = H_q({\rm St}(\sigma_p), \ZZ \langle G^{2{g'}} \rangle) \, , 
$$
and moreover  
$$
\partial'' \sigma_p = \sum_{k=1}^{p+1}(-1)^{k-1}\sigma_p^k \, ,
$$
where $\sigma_p^k$ is the $(p-1)$-simplex of $ \tch_{g', 1}$ spanned by 
$(\alpha_1, \beta_1), \ldots , \widehat{(\alpha_k, \beta_k)}, \ldots , (\alpha_{p+1} , \beta_{p+1})$
(here we follow Notation \ref{tch_geobasis}), and $\partial''$ is the boundary operator of $S_\bullet$.

As proved in  \cite[VII, (8.1)]{Brown},  $d^1 \colon H_q({\rm St}(\sigma_p), \ZZ \langle G^{2{g'}} \rangle)
\to H_q({\rm St}(\sigma_{p-1}), \ZZ \langle G^{2{g'}} \rangle)$ is given by the following expression (up to a sign):
$$
d^1 = \pm \sum_{k=1}^{p+1} (-1)^{k-1} v_{p,k} \circ u_{p,k} \, ,
$$
where $u_{p, k} \colon H_*({\rm St}(\sigma_p), \ZZ \langle G^{2{g'}} \rangle) 
\to H_*({\rm St}(\sigma_p^k), \ZZ \langle G^{2{g'}} \rangle)$
is the homomorphism induced by the inclusion ${\rm St}(\sigma_p) \subset {\rm St}(\sigma_p^k)$.
The homomorphism  
$v_{p, k} \colon H_*({\rm St}(\sigma_p^k), \ZZ \langle G^{2{g'}} \rangle) \to H_*({\rm St}(\sigma_{p-1}), \ZZ \langle G^{2{g'}} \rangle)$
is associated to the group isomorphism  
\[
{\rm St}(\sigma_p^k) \to {\rm St}(\sigma_{p-1}) \, , \qquad \gamma \mapsto  \gamma^{\gamma_{p,k}} \, , 
\]
(where $\gamma_{p,k} \in { \Gamma_{g', 1}}$ is any element  such that $\gamma_{p, k}^{-1}\sigma_p^k = \sigma_{p-1}$) 
and to
the action of $\gamma_{p, k}^{-1}$ on $\ZZ \langle G^{2{g'}} \rangle$
(notice that 
$\gamma_{p, k}^{-1} \colon \ZZ \langle G^{2{g'}} \rangle \to \ZZ \langle G^{2{g'}} \rangle$ 
 is equivariant with respect to the isomorphism ${\rm St}(\sigma_p^k) \to {\rm St}(\sigma_{p-1})$, 
$\gamma \mapsto  \gamma^{\gamma_{p,k}}$).
 
Therefore $v_{p,k} \circ u_{p,k}$ is the homomorphism induced by the composition  of the following chain maps:
\begin{equation}\label{d1pk}
F_\bullet  \otimes_{{\rm St}(\sigma_p)} \ZZ \langle G^{2{g'}} \rangle \to  
F_\bullet \otimes_{{\rm St}(\sigma_{p}^k)} \ZZ \langle G^{2{g'}} \rangle 
\to F_\bullet \otimes_{{\rm St}(\sigma_{p-1})} \ZZ \langle G^{2{g'}} \rangle  
\end{equation}
\[
x \otimes (a_1, b_1, \ldots , a_{g'}, b_{g'}) \mapsto  x \otimes (a_1, b_1, \ldots , a_{g'}, b_{g'})
\mapsto  x\gamma_{p, k}^{-1} \otimes  \gamma_{p, k}(a_1, b_1, \ldots , a_{g'}, b_{g'}) \, ,
\]
for the last map see \cite[III.8]{Brown} (paragraph preceding III.8.1). 
Note that we can use the same  $F_\bullet$ because it is a free resolution of $\ZZ$  over $\ZZ{\rm St}(\sigma)$, 
for every $\sigma \in { \tch_{g', 1}}$.

We choose $\gamma_{p, k}^{-1}$ such that it induces the following automorphism of $\pi_1({ \Sigma_{g', 1}}, y_0)$:
\begin{equation}\label{gammapk}
\begin{cases} 
& \alpha_i , \beta_i \mapsto \alpha_i , \beta_i \, , \qquad {\rm for} \quad 1\leq i < k \quad {\rm or} \quad p+1< i \leq {g'} \, , \\
& \alpha_i , \beta_i \mapsto \alpha_{i-1} , \beta_{i-1} \, , \qquad {\rm for} \quad k+1\leq i \leq p+1 \, , \\
& \alpha_k , \beta_k \mapsto \alpha_{p+1}^{\delta_k^{-1}} , \beta_{p+1}^{\delta_k^{-1}} \, ,
\end{cases}
\end{equation}
where $\delta_k= [\alpha_k, \beta_k] \cdot \ldots \cdot [\alpha_p, \beta_p]$. 
With this choice, $\gamma_{p, k}$ acts on the set 
$G^{2{g'}}=\operatorname{Hom}(\pi_1({ \Sigma_{g', 1}}, y_0), G)$
as follows:
\begin{equation}\label{gammapk action}
\gamma_{p,k}(a_1,  \ldots ,  b_{g'}) = (a_1,  \ldots,  b_{k-1}, a_{k+1},  \ldots ,  b_{p+1},
a_{k}^{d_k}, b_k^{d_k}, a_{p+2},  \ldots ,  b_{g'}) \, .
\end{equation}

Furthermore, let us note that ${\rm St}(\sigma_p)$ (respectively ${\rm St}(\sigma_{p-1})$) 
acts trivially on the first $2p+2$ (respectively $2p$) entries of 
$(a_1,  \ldots ,  b_{g'})$, hence 
$$
F_\bullet  \otimes_{{\rm St}(\sigma_p)} \ZZ \langle G^{2{g'}} \rangle \cong \ZZ \langle G^{2p+2} \rangle \otimes
F_\bullet  \otimes_{{\rm St}(\sigma_p)} \ZZ \langle G^{2{g'}-2p-2} \rangle
$$
and 
$$
F_\bullet  \otimes_{{\rm St}(\sigma_{p-1})} \ZZ \langle G^{2{g'}} \rangle \cong \ZZ \langle G^{2p} \rangle \otimes
F_\bullet  \otimes_{{\rm St}(\sigma_p)} \ZZ \langle G^{2{g'}-2p} \rangle \, ,
$$
as  complexes. 

Under these identifications we rewrite \eqref{d1pk} as follows:
\begin{equation}\label{d1pk'}
\ZZ \langle G^{2p+2} \rangle \otimes F_\bullet  \otimes_{{\rm St}(\sigma_p)} \ZZ \langle G^{2{g'}-2p-2} \rangle \to 
\ZZ \langle G^{2p} \rangle \otimes F_\bullet  \otimes_{{\rm St}(\sigma_{p-1})} \ZZ \langle G^{2{g'}-2p} \rangle 
\end{equation}
\[
(a_1, \ldots , b_{2p+2}) \otimes x \otimes (a_{2p+3}, \ldots , b_{g'}) \mapsto 
(a_1, \ldots , \widehat{a_k, b_k}, \ldots ,b_{2p+2})
\otimes x\gamma_{p,k}^{-1} \otimes (a_k^{d_k}, b_{k}^{d_k}, \ldots ,  b_{g'}) \, . 
\]

To conclude the proof note that $\gamma_{p,k}$ is centralized by ${\rm St}(\sigma_p)$, therefore 
the map 
$F_\bullet  \otimes_{{\rm St}(\sigma_p)} \ZZ \langle G^{2{g'}-2p-2} \rangle \to 
F_\bullet  \otimes_{{\rm St}(\sigma_p)} \ZZ \langle G^{2{g'}-2p-2} \rangle$ that sends
$x\otimes_{{\rm St}(\sigma_p)} (a_{2p+3}, \ldots , b_{g'})$ to 
$x\gamma_{p, k}^{-1} \otimes_{{\rm St}(\sigma_p)} (a_{2p+3}, \ldots , b_{g'})$
is an isomorphism of complexes and it induces the identity in homology
(in fact the map $F_\bullet \to F_\bullet$, $x\mapsto x\gamma_{p,k}^{-1}$ 
lifts the identity of $\ZZ$, hence it is homotopic to the identity
\cite[Ch. I (7.4)]{Brown}). 
Hence  \eqref{d1pk'} and the following chain map 
\[
\ZZ \langle G^{2p+2} \rangle \otimes F_\bullet  \otimes_{{\rm St}(\sigma_p)} \ZZ \langle G^{2{g'}-2p-2} \rangle \to 
\ZZ \langle G^{2p} \rangle \otimes F_\bullet  \otimes_{{\rm St}(\sigma_{p-1})} \ZZ \langle G^{2{g'}-2p} \rangle  
\]
\[
(a_1, \ldots , b_{2p+2}) \otimes x \otimes (a_{2p+3}, \ldots , b_{g'}) \mapsto 
(a_1, \ldots , \widehat{a_k, b_k}, \ldots ,b_{2p+2})
\otimes x \otimes (a_k^{d_k}, b_{k}^{d_k}, \ldots ,  b_{g'}) 
\]
induce the same homomorphism in homology. 

2. The filtration $\Phi$ of $T_\bullet$ defined before induces the following filtration of the homology $H(T_\bullet)$,
$\Phi_p\left( H_{p+q}(T_\bullet) \right) = {\rm Im}\left( H_{p+q}(\Phi_pT_\bullet) \to H_{p+q}(T_\bullet) \right)$.
The associated graded group is given as follows:
$$
{\rm Gr}_p H_{p+q}(T_\bullet)= \left({\Phi_p(T_{p+q}) \cap Z}\right)/\left({\Phi_p(T_{p+q}) \cap B + \Phi_{p-1}(T_{p+q}) \cap Z}\right) =
E^\infty_{p, q} ({g'})\, .
$$
In particular, for $p=0$ we have that 
$$
{\rm Gr}_0 H_q(T_\bullet)= \left({\Phi_0(T_q) \cap Z}\right) / \left({\Phi_0(T_q) \cap B }\right) =E^\infty_{0, q} ({g'}) \, .
$$

As observed before we have that  $H_{p+q}(T_\bullet)=H_p({ \Gamma_{g', 1}}, \ZZ \langle G^{2{g'}} \rangle)$, 
for   $p+q\leq \frac{1}{2}({g'}-3)$.
Therefore, for every  $q$ such that $q\leq \frac{1}{2}({g'}-3)$, 
the edge homomorphism of the second spectral sequence, 
$$
e_q \colon E^1_{0,q} ({g'}) = H^{\partial'}_q(F \otimes_{ \Gamma_{g', 1}} C_0) \to 
H_q({ \Gamma_{g', 1}}, \ZZ \langle G^{2{g'}} \rangle) \, ,
$$
associates to any $\partial'$-cycle of $\Phi_0(T_q)=C_{q, 0}$ its class 
modulo $\partial' (C_{q+1,0} ) + \partial'' (C_{q, 1})$. Hence $e_q=d^1$ when $p=0$.
\end{proof}

\section{$\mathcal{K}$-complexes}
In this section we introduce the so-called ``Koszul-like complexes" (or $\mathcal{K}$-complexes),
which are chain complexes that have the same structure of the second spectral sequence defined in Section \ref{ccss}
(as described in Proposition \ref{d^1}). We also study some homological algebraic properties of these complexes.
Here we will follow   \cite[Sec. 4.]{e-v-w}.
$\mathcal{K}$-complexes are associated to graded modules over the so-called ``ring of connected components" $R$,
which we  define now.

\subsection{ The ring of connected components}\label{ring of connected components}
\label{ringofconnected}

Let $ \Sigma_{g_1', 1}$ and $ \Sigma_{g_2', 1}$ be two compact oriented surfaces of genus respectively ${g_1'}$ and ${g_2'}$
and with one boundary component.
Let us fix geometric  basis $\{ \tilde{\alpha}_1, \tilde{\beta}_1, \ldots , \tilde{\alpha}_{g_1'}, \tilde{\beta}_{g_1'} \}$ of 
$\pi_1({ \Sigma_{{g_1', 1}}}, \tilde{y}_0)$
and $\{ \alpha_1, \beta_1, \ldots , \alpha_{g'_2}, \beta_{g_2'} \}$ of $\pi_1({ \Sigma_{{g_2'}, 1}}, y_0)$.
If we glue $ \Sigma_{g_1', 1}$ and $ \Sigma_{g_2', 1}$ together, along two intervals in the boundaries, in such a way that 
$\tilde{y}_0$ coincides with $y_0$ 
{ as shown in Figures \ref{tethered} and \ref{simplex}},
we obtain a surface $ \Sigma_{{g_1'}+{g_2'}, 1}$ together with the geometric basis 
$\{ \tilde{\alpha}_1, \tilde{\beta}_1 , \ldots , \tilde{\alpha}_{g_1'}, \tilde{\beta}_{g_1'}, 
\alpha_1, \beta_1, \ldots , \alpha_{g_2'}, \beta_{g_2'} \}$ of $\pi_1({ \Sigma_{{g_1'}+{g_2'}, 1}}, y_0)$.

The natural homomorphisms $ \Gamma_{g_1', 1}, \Gamma_{g_2', 1} \to \Gamma_{{g_1'}+{g_2'}, 1}$, obtained by extending 
every homeomorphism of $ \Sigma_{g_1', 1}$ (respectively of $ \Sigma_{g_2', 1}$) 
to $ \Sigma_{{g_1'}+{g_2'}, 1}$ by the identity,
are injective by the Dehn-Nielsen-Baer Theorem. Therefore we identify $ \Gamma_{g_1', 1}$
and $ \Gamma_{g_2', 1}$ as subgroups of $ \Gamma_{{g_1'}+{g_2'}, 1}$. 
Since these subgroups commute, we obtain an inclusion
$$
 \Gamma_{g_1', 1} \times \Gamma_{g_2', 1} \hookrightarrow \Gamma_{{g_1'}+{g_2'}, 1} \, .
$$
Note that the map 
\begin{eqnarray*}
G^{2{g_1'}} \times G^{2{g_2'}} &\to& G^{2{g_1'}+2{g_2'}} \\
(\tilde{a}_1, \tilde{b}_1, \ldots , \tilde{a}_{g_1'}, \tilde{b}_{g_1'}) \times (a_1, b_1, \ldots , a_{g_2'}, b_{g_2'}) &\mapsto& 
(\tilde{a}_1, \tilde{b}_1, \ldots , \tilde{a}_{g_1'}, \tilde{b}_{g_1'}, a_1, b_1, \ldots , a_{g_2'}, b_{g_2'})
\end{eqnarray*}
is equivariant with respect to this inclusion, therefore it induces a group-homomorphism
\begin{eqnarray*}
\ZZ \langle G^{2{g_1'}}/{ \Gamma_{g_1', 1}} \rangle \times \ZZ \langle G^{2{g_2'}}/{ \Gamma_{g_2', 1}} \rangle &\to& 
\ZZ \langle G^{2{g_1'}+2{g_2'}}/{ \Gamma_{{g_1'}+{g_2'}, 1}} \rangle   \\
\llbracket \tilde{a}_1, \tilde{b}_1, \ldots , \tilde{a}_{g_1'}, \tilde{b}_{g_1'}\rrbracket  \times 
\llbracket a_1, b_1, \ldots , a_{g_2'}, b_{g_2'}\rrbracket  &\mapsto& 
\llbracket \tilde{a}_1, \tilde{b}_1, \ldots , \tilde{a}_{g_1'}, \tilde{b}_{g_1'}, a_1, b_1, \ldots , a_{g_2'}, b_{g_2'}\rrbracket 
\end{eqnarray*}

Let us denote by $R_{g'}:= \ZZ \langle G^{2{g'}}/{ \Gamma_{g', 1}} \rangle$, for ${g'}>0$, and $R_0:= \ZZ$.
Then the previous homomorphisms yield a non-commutative ring structure on 
$$
R:= \oplus_{{g'} \geq 0} R_{g'} \, .
$$
This graded ring is called the \textit{ring of connected components}, it depends on the group $G$.

\begin{rem}\label{A(R)finite}
We have group homomorphisms 
\begin{eqnarray*}
R_{g'} &\to& R_{{g'}+1} \\
\llbracket  a_1, b_1, \ldots , a_{g'}, b_{g'} \rrbracket  &\mapsto& \llbracket  1, 1, a_1, b_1, \ldots , a_{g'}, b_{g'} \rrbracket  \, ,
\end{eqnarray*}
which are isomorphisms for every ${g'}$ greater than a certain integer \cite{DT}. 
In \cite{CLP16} we have extended this result to the case of ramified coverings.
\end{rem}

\begin{notation}\normalfont
We denote by $R_{>0}$ the two-sided ideal $\oplus_{{g'}>0}R_{g'}$ of $R$. Moreover we endow $\ZZ$ with a structure 
of left and right $R$-module by identifying it  with $R/{R_{>0}}$.

In this article all graded $R$-modules are concentrated in non-negative degrees.
\end{notation}

\subsection{ The $\mathcal K$-complex}
\begin{defin}\label{defKM}
Let $R$ be the ring of connected components associated to the group $G$.
Let $M=\oplus_{{g'}\in \ZZ} M_{g'}$ be a graded left $R$-module (by convention $M_{g'} = \{ 0 \}$ for ${g'}<0$). 
The $\mathcal{K}$-complex of $M$
is the complex $(\mathcal{K}(M), d)$, where the term of degree $p$,  $\mathcal{K}(M)_p$, is $0$, if $p<0$, and
$$
\mathcal{K}(M)_p := \ZZ \langle G^{2p} \rangle \otimes M[p] \, , \qquad {\rm if} \quad p\geq 0 \, ,
$$
where $M[p]$ is $M$ shifted by $p$ (i.e. $(M[p])_{g'} = M_{{g'}-p}$), and by definition 
$\ZZ \langle G^{2p} \rangle$ has degree $0$. The differential $d$ is defined as follows:
\[
d \colon \ZZ \langle G^{2p+2} \rangle \otimes M[p+1] \to \ZZ \langle G^{2p} \rangle \otimes M[p] 
\]
\[
 (a_1, b_1, \ldots , a_{p+1}, b_{p+1}) \otimes m  \mapsto  
\sum_{k=1}^{p+1}(-1)^{k-1} (a_1, b_1, \ldots , \widehat{a_k, b_k}, \ldots , a_{p+1}, b_{p+1}) \otimes
\llbracket a_k^{d_k}, b_k^{d_k} \rrbracket  m \, ,
\]
where (as usual) $\widehat{a_k, b_k}$ means that we omit $a_k, b_k$ from $(a_1, b_1, \ldots , a_{p+1}, b_{p+1})$,
and  $d_k := [a_{k+1}, b_{k+1}]  \cdots [a_{p+1}, b_{p+1}]$.
\end{defin}

\begin{rem}
By standard computations it is easy to verify that $(\mathcal{K}(M), d)$ is a complex.
\end{rem}

In this article our main example  will be the following.
\begin{ex}\label{M(q)}
For every  $q\geq 0$, let 
$$
M(q):= \oplus_{{g'}\geq 0} H_q ({ \Gamma_{g', 1}}, \ZZ \langle G^{2{g'}} \rangle ) \, ,
$$
note that $M(0)_0 = \ZZ$ and $M(q)_0= \{ 0 \}$, if $q>0$, by the Alexander Lemma \cite{FM}.

For every $\ell \geq 0$
and $(\tilde{a}_1, \tilde{b}_1, \ldots , \tilde{a}_\ell, \tilde{b}_\ell) \in G^{2\ell}$, the map
\begin{eqnarray*}
\ZZ \langle G^{2{g'}} \rangle &\to& \ZZ \langle G^{2\ell + 2{g'} } \rangle \\
(a_1, b_1, \ldots , a_{g'}, b_{g'}) &\mapsto& (\tilde{a}_1, \tilde{b}_1, \ldots , \tilde{a}_\ell, \tilde{b}_\ell, a_1, b_1, \ldots , a_{g'}, b_{g'})
\end{eqnarray*}
is equivariant with respect to the inclusion $ \Gamma_{g', 1} \hookrightarrow \Gamma_{\ell+{g'}, 1}$

described at the beginning of subsection \ref{ringofconnected},
hence it induces a group homomorphism 
$H_q ({ \Gamma_{g', 1}}, \ZZ \langle G^{2{g'}} \rangle ) \to H_q ({ \Gamma_{\ell+{g'}, 1}}, \ZZ \langle G^{2\ell +2{g'} } \rangle )$,
for any $q\geq 0$.
Note that, if $\gamma \in \Gamma_{\ell, 1}$, $\gamma(\tilde{a}_1, \tilde{b}_1, \ldots , \tilde{a}_\ell, \tilde{b}_\ell)$
induces the same homomorphism between homology groups.
To see this, let us consider a free resolution of $\ZZ$
in the category of right $\ZZ \Gamma_{\ell+g', 1}$-modules, $F_\bullet \to \ZZ$. 
The homomorphism in homology induced by $(\tilde{a}_1, \tilde{b}_1, \ldots , \tilde{a}_\ell, \tilde{b}_\ell)$
corresponds to the chain map 
\begin{eqnarray}\label{RactsonMq1}
F_\bullet \otimes_{\ZZ \Gamma_{g', 1}} \ZZ \langle G^{2g'}\rangle &\to& 
F_\bullet \otimes_{\ZZ \Gamma_{\ell+g', 1}} \ZZ \langle G^{2\ell + 2g'}\rangle \nonumber \\
x\otimes (a_1, b_1, \ldots , a_{g'}, b_{g'}) &\mapsto& 
x\otimes (\tilde{a}_1, \tilde{b}_1, \ldots , \tilde{a}_\ell, \tilde{b}_\ell, a_1, b_1, \ldots , a_{g'}, b_{g'}) \, .
\end{eqnarray}
On the other hand, the homomorphism in homology induced by $\gamma(\tilde{a}_1, \tilde{b}_1, \ldots , \tilde{a}_\ell, \tilde{b}_\ell)$
corresponds to
\begin{eqnarray}\label{RactsonMq2}
x\otimes (a_1, b_1, \ldots , a_{g'}, b_{g'}) &\mapsto& 
x\otimes (\gamma(\tilde{a}_1, \tilde{b}_1, \ldots , \tilde{a}_\ell, \tilde{b}_\ell), a_1, b_1, \ldots , a_{g'}, b_{g'})\\
&=& x\gamma^{-1} \otimes (\tilde{a}_1, \tilde{b}_1, \ldots , \tilde{a}_\ell, \tilde{b}_\ell, a_1, b_1, \ldots , a_{g'}, b_{g'}) \, . \nonumber
\end{eqnarray}
Here we used the inclusion $\Gamma_{\ell, 1} \hookrightarrow \Gamma_{\ell + g', 1}$
similar to the one above.
Note that the map $F_\bullet \to F_\bullet$, $x\mapsto x\gamma^{-1}$ is a morphism  of chain complexes that lifts the identity 
of $\ZZ$, hence it is homotopic to the identity \cite[Ch. I (7.4)]{Brown}. This homotopy induces a homotopy between 
\eqref{RactsonMq1} and \eqref{RactsonMq2}.

We conclude that, the homomorphism in homology induced by 
$(\tilde{a}_1, \tilde{b}_1, \ldots , \tilde{a}_\ell, \tilde{b}_\ell) \in G^{2\ell}$ depends only on 
its class $\llbracket \tilde{a}_1, \tilde{b}_1, \ldots , \tilde{a}_\ell, \tilde{b}_\ell \rrbracket \in R_\ell$,
and it will be denoted as follows:

\begin{eqnarray*}
 H_q ({ \Gamma_{g', 1}}, \ZZ \langle G^{2{g'}} \rangle ) & \to & 
 H_q ({ \Gamma_{\ell+{g'}, 1}}, \ZZ \langle G^{2\ell + 2{g'}} \rangle ) \\
 m &\mapsto& \llbracket \tilde{a}_1, \tilde{b}_1, \ldots , \tilde{a}_\ell, \tilde{b}_\ell \rrbracket  m \, .
\end{eqnarray*}
This defines  a structure of left $R$-module on $M(q)$. Note that $M(0)=R$.

Let $(\mathcal{K}(M(q)), d)$ be the $\mathcal{K}$-complex of $M(q)$.
Note that, for $p>1$, the   term in degree ${g'}$ of $d \colon \mathcal{K}(M(q))_{p} \to \mathcal{K}(M(q))_{p-1}$
coincides with $d^1 \colon E^1_{p-1,q}({g'}) \to E^1_{p-2, q}({g'})$ (the differential described in Proposition \ref{d^1}).
Therefore, for $p>1$, $E^2_{p-1, q}({g'})$ is equal to the  term in degree ${g'}$ of $H_{p}(\mathcal{K}(M(q)))$.
While, for $p=1$, 
the term in degree ${g'}$ of $d\colon \mathcal{K}(M(q))_1 \to M(q)$ 
is the edge map $E^1_{0, q}({g'}) \to E^\infty_{0, q}({g'})$ (Proposition \ref{d^1}).

\end{ex}

\begin{notation}\label{notation degM}\normalfont
Let $M$ be a graded, left $R$-module.
We consider in this section positively graded  $R$-modules,
for instance we let $M=\oplus_{{g'}\geq 0}M_{g'}$.
\begin{itemize}
\item[i)] $U\colon M \to M$ denotes the group homomorphism $U(m)=\llbracket 1,1\rrbracket m$.
\item[ii)] $H_i(M)$ denotes  the graded abelian group ${\rm Tor}_i^R(\ZZ,M)$. In particular 
\[
H_0(M)={M} / {R_{>0}M}  \, .
\]
\item[iii)] The \textit{degree} of $M$ is defined as  ${\rm deg}(M): = \sup \{ {g'} \, | \, M_{g'}\not= 0\} \in \bN \cup \{ \infty \}$.
\item[iv)] We set 
\begin{itemize}
\item[$\bullet$] $M[U] := \ker (U\colon M \to M)$,
\item[$\bullet$] $A(M) := \max \{ \deg M[U] , \deg \left( M/U(M) \right) \}$, 
\item[$\bullet$] $\tilde{A}(M) := \max \{ \deg (U),  \deg (M[U] ) , \deg \left( M/U(M) \right) \}$, and
\item[$\bullet$] $h_i (M) := {\rm deg} \left( H_i(\mathcal{K}(M))\right)$, where $(\mathcal{K}(M), d)$ is the $\mathcal{K}$-complex
of $M$.
\end{itemize}
\item[v)] We define $\bar{R}:= R/UR$, $\bar{M}:= M/UM$.
\end{itemize}
\end{notation}

\begin{rem}\label{11central}
All over the present article, $U$ has degree $1$. However, in several places we leave indicated $\deg (U)$
because the current framework can be modified in order to study other problems  with different $U$'s.

Moreover, since $\llbracket 1,1\rrbracket  \in R$ is central,  $U\colon M \to M$ induces a  homomorphism of chain complexes 
$\mathcal{K}(M) \to \mathcal{K}(M)$,
which is defined in each degree $p$ by ${\rm Id} \otimes U \colon \mathcal{K}(M)_p \to \mathcal{K}(M)_p$.   
\end{rem}

{

}


\begin{figure}[h]
\begin{tikzpicture}[scale=.7]


\foreach \x in {-1,4,8,13}
{
\draw [very thick, dashed] 
 (-1.6+\x,2.5)  .. controls ++(85:-1) and ++(0:.1) .. (-2+\x,1);
\draw [very thick] 
 (-1.6+\x,2.51)  .. controls ++(85:-.2) and ++(82:.2) .. (-1.66+\x,2);
\draw [very thick] 
 (-2+\x,1) .. controls ++(0:-.1) and ++(-75:1) .. (-2.5+\x,2);
 

\draw [very thick] (-1.6+\x,2.5) -- (2.4+\x,2.5);

\draw [very thick] 
 (-2+\x,1)  .. controls ++(-5:2) and ++(0:-2.7) .. (0+\x,-2.9)
  .. controls ++(0:2.7) and ++(-5:-2) .. (2+\x,1);

\draw [very thick] 
 (-0.1+\x,0.1) .. controls ++(0:.2) and ++(90:.7) .. (.5+\x,-1)
 .. controls ++(90:-.7) and ++(0:.2) .. (-0.1+\x,-2.1);

\draw [very thick] 
 (0+\x,0.07) .. controls ++(0:-.2) and ++(90:.7) .. (-.5+\x,-1)
 .. controls ++(90:-.7) and ++(0:-.2) .. (0+\x,-2.07);
 
\draw [blue] (0+\x,-.9) ellipse (.9 and 1.6);

\draw [blue]   
 (0.4165+\x,-0.5) .. controls ++(108:-.2) and ++(100:-.2) .. (1.22+\x,-.3);
\draw [blue, dashed]   
 (0.4165+\x,-0.5) .. controls ++(108:.2) and ++(100:.2) .. (1.22+\x,-.3);

\draw [blue]   
 (-0.4+\x,2) .. controls ++(100:-.2) and ++(120:.2) .. (0+\x,.7);

\draw [red, very thick] (-2.51+\x,2) -- (1.51+\x,2);
};

\foreach \x in {-1,8,13}
\draw [very thick] 
 (2.4+\x,2.51)  .. controls ++(85:-1) and ++(0:.1) .. (2+\x,1)
  .. controls ++(0:-.1) and ++(-75:1) .. (1.5+\x,2);

\begin{scope}[yshift=-6cm]

\foreach \x in {0,4,8,12}
{
\draw [very thick, dashed] 
 (-1.6+\x,2.5)  .. controls ++(85:-1) and ++(0:.1) .. (-2+\x,1);
\draw [very thick] 
 (-1.6+\x,2.51)  .. controls ++(85:-.2) and ++(82:.2) .. (-1.66+\x,2);
\draw [very thick] 
 (-2+\x,1) .. controls ++(0:-.1) and ++(-75:1) .. (-2.5+\x,2);
 

\draw [very thick] (-1.6+\x,2.5) -- (2.4+\x,2.5);

\draw [very thick] 
 (-2+\x,1)  .. controls ++(-5:2) and ++(0:-2.7) .. (0+\x,-2.9)
  .. controls ++(0:2.7) and ++(-5:-2) .. (2+\x,1);

\draw [very thick] 
 (-0.1+\x,0.1) .. controls ++(0:.2) and ++(90:.7) .. (.5+\x,-1)
 .. controls ++(90:-.7) and ++(0:.2) .. (-0.1+\x,-2.1);

\draw [very thick] 
 (0+\x,0.07) .. controls ++(0:-.2) and ++(90:.7) .. (-.5+\x,-1)
 .. controls ++(90:-.7) and ++(0:-.2) .. (0+\x,-2.07);
 
\draw [blue] (0+\x,-.9) ellipse (.9 and 1.6);

\draw [blue]   
 (0.4165+\x,-0.5) .. controls ++(108:-.2) and ++(100:-.2) .. (1.22+\x,-.3);
\draw [blue, dashed]   
 (0.4165+\x,-0.5) .. controls ++(108:.2) and ++(100:.2) .. (1.22+\x,-.3);


\draw [red, very thick] (-2.51+\x,2) -- (1.51+\x,2);
};

\foreach \x in {12}
\draw [very thick] 
 (2.4+\x,2.51)  .. controls ++(85:-1) and ++(0:.1) .. (2+\x,1)
  .. controls ++(0:-.1) and ++(-75:1) .. (1.5+\x,2);

\foreach \x in {0,4}
\draw [blue]   
 (-0.4+\x,2) .. controls ++(100:-.5) and ++(120:.5) .. (4+\x,.7);

\foreach \x in {12}
\draw [blue]   
 (-0.4+\x,2) .. controls ++(100:-.2) and ++(120:.2) .. (0+\x,.7);

\draw [blue]   
 (-0.4+8,2) 
 .. controls ++(100:-.2) and ++(-120:-.2) 
 .. (9.4,.8);
\draw [blue, dashed]   
(9.4,.8)
 .. controls ++(-25:-3.5) and ++(0:.8) 
 .. (2.05,1);
\draw [blue]   
(2.05,1)
 .. controls ++(0:-.3) and ++(120:.7) ..
(0,.7);

\end{scope} 

\end{tikzpicture}
\caption{ }\label{central}
\end{figure}

\begin{rem}
Figure \ref{central} serves to illustrate the fact that $U$ is central in $R$.

The upper row shows the product of $3$ factors together with
$4$ standard tethered chains.

In the bottom row the glueing has been performed but a new
collection of $4$ tethered chains has been chosen. It is in the
mapping class group orbit of the standard one, since the mapping
class group acts transitively on all simplices of tethered chains.

If the monodromy with respect to the standard system is
\[
(a_1,b_1,a_2,b_2,a_3,b_3,a_4,b_4)
\]
then the monodromy with respect to the new system is
\[
(a_1,b_1,a^t_4,b^t_4,a_2,b_2,a_3,b_3)
\]
where ${}^t$ indicates conjugation by $t=([a_2,b_2][a_3,b_3])^{-1}$

In case the monodromy of the last factor is trivial we get
\[
(a_1,b_1,a_4,b_4,a_2,b_2,a_3,b_3)
\]
We conclude that:
\begin{quote}
Stabilization commutes with product, since
doing the product first and then the stabilization, ie. a product with
a handle of trivial monodromy,
is up to the action of the mapping class group the same
as doing stabilization first and then the product.
\end{quote}
\end{rem}


In order to motivate the remaining part of the article, let us give a rough idea of the proof of our main result.
Here we follow  Notation \ref{notation degM}.
Let $M=M(q)$ be the graded left $R$-module of Example \ref{M(q)},
and let us consider the homomorphism $U\colon M \to M$. 
By $\ZZ [U]$ we denote the polynomial ring 
in $U$ with integer coefficients. 
Notice that $M$ can be considered as a left $\ZZ [U]$-module, where $Um := U(m)$.
Then, homological stability for the homology of degree $q$ holds true if
\[
\deg \left( M/UM \right) \, , \quad \deg \left( M[U] \right) < \infty \, . 
\]
On the other hand
\[
M/UM = \tor_0^{\ZZ[U]}(\ZZ, M)   \quad \mbox{and} \quad M[U]= \tor_1^{\ZZ[U]}(\ZZ, M) \, ,
\]
the strategy is then to relate the homological algebraic properties of $M$ over $R$ with those over $\ZZ[U]$.
This will be achieved in several steps, also considering the  ring $\bar{R}=R/UR$.

\subsection{Homological algebra of $R$}\label{homological algebra of R}

We collect here the main properties of homological algebra that we need about  graded $R$-modules.
We refer to \cite{Brown} and \cite{Weibel} for background material. 

  In particular, let us recall that the category of graded right (resp. left) $R$-modules 
is an abelian category and that every graded right (resp. left) $R$-module $N$ has a free resolution in this category,
$$
\ldots \to F_1 \to F_0 \to N \to 0 \, ,
$$
which we also denote $F_\bullet \to N \to 0$.
Moreover, for a  graded left $R$-module $M$, the  tensor product $\_ \otimes_R M$ is a right exact functor from the category 
of graded right $R$-modules to the category of bi-graded abelian groups and the groups ${\rm Tor}_i^R\left(\_ \ , M \right)$
(for $i\geq 0$) are its left derived functors. Concretely, for any graded right $R$-module $N$, choose 
a free (or projective) graded resolution $F_\bullet \to N \to 0$ as above and define as usual 
\[
{\rm Tor}_i^R\left(N , M \right) : = 
\left( {\ker (F_i \otimes_R M \to F_{i-1} \otimes_R M)}\right) / \left({{\rm im} (F_{i+1} \otimes_R M \to F_{i} \otimes_R M)} \right) \, .
\]
Similarly, $N\otimes_R \_$ is a right exact functor from the category 
of graded left $R$-modules to the category of bi-graded abelian groups, and  its left derived functors are isomorphic to
${\rm Tor}_i^R\left(N , \_ \right)$ (for $i\geq 0$).

It follows from this that, for a ring $S$, if $N$ is also a left $S$-module (resp. $M$ is also a right $S$-module), 
then  ${\rm Tor}_i^R\left(N , M \right)$ is a left $S$-module (resp. a right $S$-module).
This property will be used in the proof of Proposition \ref{EVW4.10}.

Note that ${\rm Tor}_i^R\left( N , M \right)$ has a natural structure of bi-graded abelian group,
${\rm Tor}_i^R \left( N , M \right) = \bigoplus_{k,l \geq 0} {\rm Tor}_i^R \left( N , M \right)_{k,l}$.
Let us set 
$$
{\rm Tor}_i^R \left( N , M \right)_{g'} := \bigoplus_{k+l={g'}} {\rm Tor}_i^R \left( N , M \right)_{k,l} \, ,
$$
then we define
$$
\deg \left({\rm Tor}_i^R \left(N , M \right) \right) := \sup \{ {g'} \, | \, {\rm Tor}_i^R \left( N , M \right)_{g'} \not= 0 \} \, .
$$

\begin{lemma}\label{C1}
Let $M$ be a graded left $R$-module. Then the following assertions are equivalent:
\begin{enumerate}
\item
$\deg (H_0(M)) \leq a$.
\item
$M$ is generated as $R$-module by elements of degree $\leq a$, i.e.
every $m\in M$ can be written as $m=\sum_{i=1}^k r_i m_i$, for some $r_i\in R$ and $m_i\in M$
with $\deg (m_i) \leq a$, $i=1, \ldots , k$.
\end{enumerate}
\end{lemma}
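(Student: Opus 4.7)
The plan is to recognize this as a standard Nakayama-type lemma for graded modules over a ``connected'' graded ring. The key structural fact I will invoke is that $R = \bigoplus_{n \geq 0} R_n$ with $R_0 = \ZZ$, so that $R_{>0}$ is the augmentation ideal with $R/R_{>0} \cong \ZZ$, and the degree-$n$ part of $H_0(M) = M/R_{>0}M$ is $M_n / (R_{>0}M)_n$, where $(R_{>0}M)_n = \sum_{k>0} R_k \cdot M_{n-k}$.

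For the direction (2) $\Rightarrow$ (1), I will take $m \in M_n$ with $n > a$ and write $m = \sum_i r_i m_i$ with $\deg(m_i) \leq a$. Projecting to the degree-$n$ graded component I may assume each term $r_i m_i$ is homogeneous of degree $n$, so $\deg(r_i) = n - \deg(m_i) \geq n - a > 0$, i.e., $r_i \in R_{>0}$. Hence $m \in R_{>0}M$, proving $H_0(M)_n = 0$ for all $n > a$.

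For the converse (1) $\Rightarrow$ (2), I will let $N \subseteq M$ be the $R$-submodule generated by $\bigcup_{n \leq a} M_n$ and argue by strong induction on degree that $M_n \subseteq N$ for every $n$. The base case $n \leq a$ is immediate from the definition of $N$. For the inductive step $n > a$, the hypothesis $\deg(H_0(M)) \leq a$ gives $M_n = (R_{>0}M)_n$, so every element of $M_n$ is a finite sum $\sum r_i x_i$ with $r_i \in R_{k_i}$, $k_i > 0$, and $x_i \in M_{n-k_i}$; since $n-k_i < n$, the inductive hypothesis yields $x_i \in N$, and therefore $M_n \subseteq N$.

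There is no genuine obstacle here; the argument is essentially formal once one writes down the graded pieces carefully. The only point that needs a moment's attention is to pass from the ungraded identity $m = \sum r_i m_i$ to a graded-component version in the first implication, so as to ensure the coefficients landing in the top degree really lie in $R_{>0}$; this is handled by projecting onto $M_n$ and using that $R$ and $M$ are non-negatively graded.
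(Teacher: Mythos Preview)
Your proof is correct and is essentially the same approach as the paper's: the paper simply states that the equivalence is a direct consequence of $H_0(M)=M/R_{>0}M$, and your argument is exactly the standard unwrapping of that observation via the graded Nakayama-type induction.
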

\begin{proof}
This is a direct consequence of the fact that $H_0(M)=M/R_{>0}M$.
\end{proof}

\begin{lemma}\label{4.4/Z}
Let $M$ be a graded left $R$-module and let $N$ be a graded right $R$-module. Then the following inequality holds true:
\begin{equation*}
\deg (N\otimes_R M) \leq \min \{ \deg (N) + \deg (H_0(M)) \, ,  \deg (H_0(N)) + \deg (M) \} \, .
\end{equation*}
\end{lemma}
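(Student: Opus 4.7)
The plan is to use Lemma \ref{C1} to translate each $H_0$-degree bound into a statement about the degrees of a generating set, and then push this information through the defining relations of the tensor product. The argument is symmetric in $N$ and $M$, so it suffices to establish one of the two bounds in the minimum.

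Concretely, I would first set $a := \deg(H_0(M))$, which we may assume is finite (otherwise the bound is vacuous). By Lemma \ref{C1}, $M$ is generated as a graded left $R$-module by homogeneous elements of degree $\leq a$. Every element of $N \otimes_R M$ is a finite $\ZZ$-linear combination of simple tensors $n \otimes m$, and using the tensor-product relation $n \otimes rm = nr \otimes m$ for homogeneous $r \in R$, we may rewrite each such tensor so that $m$ lies in our chosen generating set, hence has degree $\leq a$. In the bigrading on $N \otimes_R M$, the total degree of $n \otimes m$ is then $\deg(n) + \deg(m) \leq \deg(N) + a$, which forces $(N \otimes_R M)_k = 0$ for every $k > \deg(N) + \deg(H_0(M))$. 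This yields the first of the two inequalities.

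For the second inequality I would invoke the analogue of Lemma \ref{C1} for right $R$-modules, namely that $\deg(H_0(N)) \leq b$ (with $H_0(N) := N/N R_{>0}$) is equivalent to $N$ being generated as a right $R$-module by elements of degree $\leq b$; the proof is identical. Applying the same argument with the roles of $N$ and $M$ interchanged, and exploiting the tensor relation $nr \otimes m = n \otimes rm$ in the opposite direction to absorb coefficients into $M$, gives $\deg(N \otimes_R M) \leq \deg(H_0(N)) + \deg(M)$. Taking the minimum of the two bounds completes the proof. I do not anticipate any real obstacle here: the argument is a routine unwinding of Lemma \ref{C1} together with the defining relations of $\otimes_R$, and it is insensitive to whether coefficients are taken in $\ZZ$ or in a field, which is precisely the kind of robustness that the authors need when upgrading the results of \cite{e-v-w} from vector spaces to $\ZZ$-modules.
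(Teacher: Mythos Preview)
Your proposal is correct and follows essentially the same argument as the paper: both set $a = \deg(H_0(M))$, invoke Lemma~\ref{C1} to write $m = \sum r_i m_i$ with $\deg(m_i) \leq a$, rewrite $n \otimes m = \sum nr_i \otimes m_i$, and bound the degree of each summand by $\deg(N) + a$; the paper likewise dispatches the second inequality by symmetry.
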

\begin{proof}
We prove that $\deg (N\otimes_R M) \leq  \deg (N) + \deg (H_0(M))$. A similar argument shows that 
$\deg (N\otimes_R M) \leq   \deg (H_0(N)) + \deg (M)$.

Let  $a:= \deg (H_0(M))$ and suppose that $a< +\infty$ (otherwise the  claim is obvious).
By Lemma \ref{C1},  $M$ is generated by elements of $\oplus_{i=0}^a M_i$.
Therefore every element $n\otimes m \in N\otimes_R M$ has an expression of the  form
\[
n\otimes m = \sum_{i=1}^k nr_i \otimes m_i \, ,
\]
where $r_i\in R$ and $\deg (m_i) \leq a$ for every $i=1, \ldots , k$. The claim follows from the fact that 
$\deg ( nr_i\otimes m_i) \leq \deg (N) + a$, for every $i$. 
\end{proof}

\begin{lemma}\label{res_deg}
Let $\bar N$ be a graded right $\bar R$-module. Then $\bar N$  has a resolution, 
$$
\ldots \to \bar{P}_1 \to \bar{P}_0\to \bar N \to 0 \, ,
$$
where the morphisms $\bar{P}_{i+1} \to \bar{P}_i$ (for every $i\geq 0$) and $\bar{P}_0\to \bar N$
are of degree $0$, and 
 with  $\bar{P}_i$ a graded  free right $\bar R$-module generated by elements of degrees 
less than or equal to $\deg (\bar N) + i \deg (\bar R)$, for all $i\geq 0$.
\end{lemma}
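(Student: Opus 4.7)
\emph{Proof plan.} The idea is a routine inductive construction of a free resolution, with the degree bound propagated by one elementary observation: since $\deg(\bar{R}) < \infty$ (which follows from Remark~\ref{A(R)finite}, as $U \colon R_n \to R_{n+1}$ is an isomorphism for $n \gg 0$ and hence $\bar{R} = R/UR$ vanishes in sufficiently high degrees), any free graded right $\bar{R}$-module $\bar{P} = \bigoplus_\alpha \bar{R}[-n_\alpha]$ whose generators lie in degrees $n_\alpha \leq d$ automatically satisfies $\deg(\bar{P}) \leq d + \deg(\bar{R})$. Indeed the degree-$k$ piece of $\bar{R}[-n_\alpha]$ is $\bar{R}_{k-n_\alpha}$, which vanishes once $k > n_\alpha + \deg(\bar{R})$.

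For the base step I choose a graded generating set of $\bar{N}$ as an abelian group (for instance a $\ZZ$-generating set of each graded piece), all of whose elements sit in degrees $\leq \deg(\bar{N})$. Let $\bar{P}_0$ be the free graded right $\bar{R}$-module on this set and let $\bar{P}_0 \to \bar{N}$ be the obvious degree-$0$ surjection. Then $\bar{P}_0$ is generated in degrees $\leq \deg(\bar{N}) = \deg(\bar{N}) + 0 \cdot \deg(\bar{R})$, as required.

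For the inductive step, suppose $\bar{P}_i$ has been built with generators in degrees $\leq \deg(\bar{N}) + i\deg(\bar{R})$, together with a degree-$0$ map to $\bar{P}_{i-1}$ (or to $\bar{N}$ when $i=0$). By the key observation applied to $\bar{P}_i$, one has $\deg(\bar{P}_i) \leq \deg(\bar{N}) + (i+1)\deg(\bar{R})$. Let $K_i$ be the kernel of the previously constructed differential; since $K_i \subseteq \bar{P}_i$, we get $\deg(K_i) \leq \deg(\bar{N}) + (i+1)\deg(\bar{R})$. Now take $\bar{P}_{i+1}$ to be the free graded right $\bar{R}$-module on a graded abelian generating set of $K_i$, with the tautological degree-$0$ map into $K_i \hookrightarrow \bar{P}_i$. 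By construction $\bar{P}_{i+1}$ is free, the composite $\bar{P}_{i+1} \to \bar{P}_i \to \bar{P}_{i-1}$ vanishes, and $\bar{P}_{i+1}$ is generated in degrees $\leq \deg(\bar{N}) + (i+1)\deg(\bar{R})$, completing the induction.

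I do not expect any serious obstacle: the construction is purely formal and the degree bound falls out of the grading of a free module. The only point where one has to be mildly careful is to guarantee that a generating set of $K_i$ can be chosen concentrated in the stated degrees, but this is immediate once one knows $K_i$ itself is concentrated in those degrees, which is exactly what the key observation supplies.
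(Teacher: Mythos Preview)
Your proof is correct and follows essentially the same inductive construction as the paper: build $\bar P_0$ as the free $\bar R$-module on a generating set of $\bar N$, then at each step take the free module on the kernel, and propagate the degree bound using the fact that a free $\bar R$-module generated in degrees $\le d$ is concentrated in degrees $\le d+\deg(\bar R)$. Your write-up is in fact slightly more explicit than the paper's about this key observation; note, however, that you do not actually need to invoke finiteness of $\deg(\bar R)$, since if $\deg(\bar R)=\infty$ (or $\deg(\bar N)=\infty$) the asserted bound is vacuous for $i\ge 1$ anyway.
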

\begin{proof}
We construct such a resolution proceeding inductively on $i$.
For $i=0$, let $\bar{P}_0$ be the free right $\bar R$-module on the elements of $\bar N$
and let $\bar{P}_0\to \bar N$ be the morphism induced by the identity on these generators.
For $i>0$, let $\bar{P}_i$ be the free right $\bar R$-module on the elements of 
$\ker (\bar{P}_{i-1} \to \bar{P}_{i-2})$ (where we set $\bar{P}_{-1}:= \bar N$)
and let $\bar{P}_i \to \bar{P}_{i-1}$ be the morphism induced by the identity on these generators.
By construction,  the morphisms $\bar{P}_{i} \to \bar{P}_{i-1}$,  for every $i\geq 0$, 
are of degree $0$ and $\bar{P}_{i-1}$ is generated by elements of degree less than or equal to
$\deg (\bar N) + (i-1) \deg (\bar R)$. Therefore the elements of $\ker (\bar{P}_{i-1} \to \bar{P}_{i-2})$
have degree less than or equal to $\deg (\bar N) + i \deg (\bar R)$, this completes the proof since these
elements generate $\bar{P}_{i}$.
\end{proof}

\begin{lemma}\label{EVW4.5}
Let $\bar{M}$ be a graded left  $\bar{R}$-module, and let $\bar N$ be a graded right $\bar R$-module.  
Then 
\[
\deg \left( {\rm Tor}_i^{\bar{R}}(\bar N, \bar{M}) \right) \leq   \deg ( \bar N ) + \deg (\bar{M}) + i\deg(\bar{R}) \, , \quad  
\forall i\geq 0 \, .
\] 
\end{lemma}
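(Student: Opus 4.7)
The plan is to combine Lemma \ref{res_deg} with Lemma \ref{4.4/Z} by computing $\mathrm{Tor}$ from a free resolution of the right variable.

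First, I would invoke Lemma \ref{res_deg} to produce a resolution
\[
\cdots \to \bar P_1 \to \bar P_0 \to \bar N \to 0
\]
in the category of graded right $\bar R$-modules, with every differential of degree $0$, and such that each $\bar P_i$ is free on generators of degrees at most $\deg(\bar N) + i\deg(\bar R)$. Since the functor $\_ \otimes_{\bar R} \bar M$ is right exact, one computes
\[
\mathrm{Tor}_i^{\bar R}(\bar N, \bar M) = H_i\bigl(\bar P_\bullet \otimes_{\bar R} \bar M\bigr),
\]
so in particular $\mathrm{Tor}_i^{\bar R}(\bar N, \bar M)$ is a subquotient of $\bar P_i \otimes_{\bar R} \bar M$ in the category of bi-graded abelian groups. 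Therefore
\[
\deg\bigl(\mathrm{Tor}_i^{\bar R}(\bar N, \bar M)\bigr) \leq \deg\bigl(\bar P_i \otimes_{\bar R} \bar M\bigr),
\]
since taking kernels and quotients can only delete graded pieces, not create new ones in higher degrees.

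Next, I would bound $\deg(\bar P_i \otimes_{\bar R}\bar M)$ using Lemma \ref{4.4/Z}, which gives
\[
\deg(\bar P_i \otimes_{\bar R} \bar M) \leq \deg(H_0(\bar P_i)) + \deg(\bar M).
\]
Because $\bar P_i$ is free as a graded right $\bar R$-module on generators of degrees $\leq \deg(\bar N) + i\deg(\bar R)$, the quotient $H_0(\bar P_i) = \bar P_i / \bar P_i \bar R_{>0}$ is a free graded abelian group on those same generators, so $\deg(H_0(\bar P_i)) \leq \deg(\bar N) + i\deg(\bar R)$. Chaining the inequalities yields the claimed bound
\[
\deg\bigl(\mathrm{Tor}_i^{\bar R}(\bar N, \bar M)\bigr) \leq \deg(\bar N) + \deg(\bar M) + i\deg(\bar R).
\]

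There is no real obstacle here: the entire content of the statement is packaged into the existence of a resolution with controlled generation degrees (Lemma \ref{res_deg}) and the multiplicative behaviour of degree under tensor product (Lemma \ref{4.4/Z}). The only minor point to verify is that $\mathrm{Tor}$ may equivalently be computed from a resolution of the right $\bar R$-module variable (rather than the left one), but this is standard and follows from the balanced nature of the derived functors of $\otimes_{\bar R}$, which is used already in the discussion preceding Lemma \ref{C1}.
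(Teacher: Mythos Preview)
Your proof is correct and follows essentially the same route as the paper: resolve $\bar N$ via Lemma~\ref{res_deg}, observe that $\mathrm{Tor}_i^{\bar R}(\bar N,\bar M)$ is a subquotient of $\bar P_i\otimes_{\bar R}\bar M$, and bound the degree of the latter. The only cosmetic difference is that you invoke Lemma~\ref{4.4/Z} (transported to $\bar R$) to bound $\deg(\bar P_i\otimes_{\bar R}\bar M)$, whereas the paper carries out that same tensor-product argument by hand.
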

\begin{proof} 
Let $\ldots \to \bar{P}_1 \to \bar{P}_0\to  \bar N \to 0$
be a resolution as in Lemma \ref{res_deg}. Since ${\rm Tor}_i^{\bar{R}}(\bar N, \bar{M})$ is a sub-quotient of 
$\bar{P}_i \otimes_{\bar{R}} \bar{M}$, we have that 
\[
\deg  \left( {\rm Tor}_i^{\bar{R}}(\bar N, \bar{M}) \right) \leq \deg \left( \bar{P}_i \otimes_{\bar{R}} \bar{M} \right) \, .
\] 
By Lemma \ref{res_deg} we can write every element $x\in \bar{P}_i$ as $x=\sum_{\ell=1}^k \bar{r}_\ell x_\ell$,
where $\bar{r}_\ell \in \bar R$ and $x_\ell \in \bar{P}_i$ have degree $\leq \deg (\bar N) + i \deg (\bar R)$,
$\ell=1, \ldots , k$. So, the elements of $\bar{P}_i \otimes_{\bar{R}} \bar{M}$ of the form $x\otimes m$
can be written as 
$$
x\otimes m = \sum_{\ell=1}^k x_\ell \otimes \bar{r}_\ell m \, , 
$$
and hence they have degree $\leq \deg (\bar N) + i \deg (\bar R) + \deg (\bar M)$. 
\end{proof}

In the following lemmas, for a graded left $R$-module $M$,  we use the following notation: 
\begin{equation}\label{deltaM}
\delta (M) := \max \left\{ \deg \left( \tor^R_0(\bar R, M) \right) \, , \, \deg \left( \tor^R_1 (\bar R, M) \right) \right\} \, .
\end{equation}

\begin{lemma}\label{lemma 3.12}
Let $M$ be a graded left $R$-module. Then 
\[
A(M) \leq  \delta (M) + A(R)  \, .
\]
\end{lemma}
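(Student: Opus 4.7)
My plan is to reinterpret the invariant $A(M)=\max\{\deg M[U],\deg(M/UM)\}$ in terms of the $\tor$-groups whose degrees control $\delta(M)$, via a short free presentation of $\bar R$ as a right $R$-module.

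Since $U=\llbracket 1,1\rrbracket\in R_1$ is central (Remark \ref{11central}), right multiplication by $U$ defines a morphism of graded right $R$-modules $R\xrightarrow{\cdot U} R$ whose cokernel is $\bar R=R/UR$ and whose kernel is $R[U]$. Assembling this into the beginning of a free resolution of $\bar R$,
\[
\cdots\longrightarrow F_2\longrightarrow R\xrightarrow{\ \cdot U\ } R\xrightarrow{\ \pi\ }\bar R\longrightarrow 0,
\]
where $F_2$ is any free right $R$-module surjecting onto $R[U]$ composed with $R[U]\hookrightarrow R$, and tensoring with $M$, the middle differential becomes $U\colon M\to M$ while the image of $F_2\otimes_R M\to M$ is exactly $R[U]\cdot M$. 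Hence
\[
\tor_0^R(\bar R,M)\;\cong\; M/UM,\qquad \tor_1^R(\bar R,M)\;\cong\; M[U]\big/R[U]\cdot M.
\]
By the definition of $\delta(M)$, both quotients are concentrated in degrees $\leq \delta(M)$. This already yields $\deg(M/UM)\leq\delta(M)\leq\delta(M)+A(R)$, so the only remaining task is to bound $\deg M[U]$.

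For this, I fix $n>\delta(M)+A(R)$. Since $n>\delta(M)$, the vanishing of $\tor_1^R(\bar R,M)$ in degree $n$ forces
\[
(M[U])_n\;=\;(R[U]\cdot M)_n\;=\;\sum_{\substack{k+l=n\\ k\leq A(R)}} R[U]_k\cdot M_l,
\]
the restriction $k\leq A(R)$ coming from $\deg R[U]\leq A(R)$. For every index in this sum, $l=n-k\geq n-A(R)>\delta(M)$, so the $\tor_0$ bound gives $M_l=U\cdot M_{l-\deg U}$. Since $U$ is central and $R[U]\cdot U=0$, I then get
\[
R[U]_k\cdot M_l\;=\;R[U]_k\cdot U\cdot M_{l-\deg U}\;=\;U\cdot R[U]_k\cdot M_{l-\deg U}\;=\;0.
\]
Therefore $(M[U])_n=0$, giving $\deg M[U]\leq \delta(M)+A(R)$, which completes the proof.

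The only delicate point is the identification $\tor_1^R(\bar R,M)\cong M[U]/R[U]\cdot M$: because $R[U]$ need not vanish, $\bar R$ typically does not admit a length-one free resolution, so the naive two-term complex $R\xrightarrow{\cdot U} R$ is not itself exact on the left, and one must pass to the next syzygy and verify that it contributes precisely $R[U]\cdot M$ to the image in $M$. Once this is in place, the rest is a clean degree count built on the trivial but essential identity $R[U]\cdot U=0$ together with the centrality of $U$.
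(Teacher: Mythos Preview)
Your argument is correct. Both your proof and the paper's hinge on the same presentation $R\xrightarrow{U}R\to\bar R\to 0$, the identification $\tor_0^R(\bar R,M)=M/UM$, and the fact that $M[U]$ differs from $\tor_1^R(\bar R,M)$ only by a contribution coming from $R[U]$. The difference is in how that last contribution is bounded. The paper factors $U$ as $R\otimes_R M\xrightarrow{\alpha}U(R)\otimes_R M\xrightarrow{\beta}R\otimes_R M$, obtains from the two short exact sequences $0\to U(R)\to R\to\bar R\to 0$ and $0\to R[U]\to R\to U(R)\to 0$ a sequence $R[U]\otimes_R M\to M[U]\xrightarrow{\alpha}\tor_1^R(\bar R,M)$ exact in the middle, and then bounds $\deg(R[U]\otimes_R M)$ via Lemma~\ref{4.4/Z} together with the surjection $M/UM\twoheadrightarrow H_0(M)$. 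You instead compute $\tor_1^R(\bar R,M)\cong M[U]/R[U]\!\cdot\! M$ directly from a partial free resolution and then kill $(R[U]\!\cdot\! M)_n$ for $n>\delta(M)+A(R)$ using the elementary identity $R[U]\cdot U=0$ (from centrality of $U$). Your route is a bit more self-contained---it bypasses the factorization of $U$, the appeal to Lemma~\ref{4.4/Z}, and the detour through $H_0(M)$---while the paper's route is more modular and reuses tools already in place.
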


\begin{proof}
Let us consider the following exact sequence of right $R$-modules, 
\[
R\stackrel U\to R\to \bar R\to 0 \, .
\]
Applying the tensor product $\otimes_R M$ to  this sequence yields the isomorphism
\[
\tor_0^R(\bar R, M) = \bar R \otimes_R M = M / U(M) \, .
\]
Therefore $\deg \left( M / UM \right) = \deg (\tor_0^R(\bar R, M)) \leq \delta (M) + A(R)$.

To prove that  $\deg \left( M[U] \right) \leq \delta (M) + A(R)$,
let us first identify $M$ with $R\otimes_R M$ and let us factor the multiplication map $U \colon M \to M$
as follows:
\begin{equation}\label{U_factors}
\begin{array}{cccccccccc}
R\otimes_R M & \stackrel\alpha\to & U(R) \otimes_R M & \stackrel\beta\to & R\otimes_R M \\
s\otimes m & \mapsto & U(s) \otimes m
& \mapsto & U(s)\otimes m \, ,
\end{array}
\end{equation}
where $U(R)$ is considered as a two-sided ideal of $R$.
Note that $\deg (\beta ) =0$ and that $\deg (\alpha ) = \deg (U)=1$. 
Next, let us consider the following short-exact sequences of left $R$-modules:
\[
0\to U(R) \to R \to \bar R \to 0,\qquad
0\to R[U] \to R \to U(R) \to 0 \, \, .
\]
Applying the tensor product $\otimes_R M$ to  these  sequences we obtain two long exact sequences 
of the corresponding $\tor_*^R(\_ , M)$ groups, part of them are as follows:
\begin{equation}
\label{tensor1}
\begin{array}{ccccccccccc}
0 & \to & \tor_1^R(\bar R,M) & \to &
U(R) \otimes_R M & \stackrel\beta\to & R\otimes_R M 
& \to & \bar R \otimes_R M & \to & 0 
\end{array}
\end{equation}

\begin{equation}
\label{tensor2}
\begin{array}{ccccccc}
R[U] \otimes_R M & \to & R\otimes_R M 
& \stackrel\alpha\to & U (R) \otimes_R M & \to & 0 \, .
\end{array}
\end{equation}
By \eqref{tensor2} $R[U] \otimes_R M$ surjects onto $\ker ( \alpha)$, which is isomorphic to a submodule
of $M[U]$. Since $M[U]$ is the kernel of $U$, its image under $\alpha$ is contained in $\ker (\beta)$, 
which by \eqref{tensor1} is identified with $\tor_1^R(\bar R,M)$.
Thus we get a sequence that is exact in the middle:
\[
\begin{array}{ccccccc}
R[U] \otimes_R M & \to & M[U] & \stackrel\alpha\to & \tor_1^R(\bar R,M) \, .
\end{array}
\]
By Lemma \ref{4.4/Z} the degree bound on the left hand side is 
\[
\deg \left( R[U] \otimes_R M \right)  \leq  \deg (R[U] )  + \deg (H_0(M))  = 
\deg (R[U] )  + \deg (M/R_{>0}M) \, .
\]
On the other hand, for an homogeneous element $x\in M[U]$, if $\alpha(x)=0$, then
$\deg (x) \leq \deg \left( R[U] \otimes_R M \right)$, while if $\alpha(x)\not= 0$, then 
$\deg (x) \leq \deg \left( \tor_1^R(\bar R,M) \right) -\deg (U)$. 
Hence
\[
\deg (M[U]) \quad \leq \quad \max\{
\deg (R[U])  + \deg (M/R_{>0}M),\, \deg \left( \tor_1^R(\bar R,M)\right) -\deg (U) \} \, .
\]
Now note that $M/U(M)$  surjects onto $M/R_{>0}M$, and hence
\[
\deg (M/R_{>0}M) \leq \deg (M/U(M)) =
\deg \left( \tor_0^R(\bar R, M) \right) \, .
\]
From this we conclude  that 
\[
\deg (M[U]) \leq \delta (M) + A(R) \, .
\]
\end{proof}

\begin{lemma}\label{EVW4.7}
Let $M$ be a graded left $R$-module.  Then the following inequalities hold true
\[
\deg \left(\tor_i^R(\bar R,M) \right) \leq
\begin{cases}
\delta (M) & \quad \mbox{for} \quad i=0,1 \, ; \\
\delta (M) + i {\tilde{A}(R)} & \quad \mbox{for} \quad i>1 \, . 
\end{cases}
\]

\end{lemma}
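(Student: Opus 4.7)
The cases $i=0$ and $i=1$ are immediate from the definition $\delta(M)=\max\{\deg \tor_0^R(\bar R,M),\deg \tor_1^R(\bar R,M)\}$, so the interesting content is the case $i\geq 2$, which I will obtain by strong induction on $i$.

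Following the strategy of Lemma \ref{lemma 3.12}, I split the four-term exact sequence of right $R$-modules
$$0\to R[U]\to R\xrightarrow{U}R\to\bar R\to 0$$
into the two short exact sequences $0\to UR\to R\to\bar R\to 0$ and $0\to R[U]\to R\to UR\to 0$. Since $R$ is free as a right $R$-module, $\tor_j^R(R,M)=0$ for $j\geq 1$, and the associated long exact sequences produce natural isomorphisms $\tor_i^R(\bar R,M)\cong \tor_{i-1}^R(UR,M)$ for $i\geq 2$ and $\tor_{i-1}^R(UR,M)\cong \tor_{i-2}^R(R[U],M)$ for $i\geq 3$. For $i=2$ the same sequences produce an embedding $\tor_2^R(\bar R,M)\hookrightarrow R[U]\otimes_R M$; combined with Lemma \ref{4.4/Z}, the bound $\deg R[U]\leq \tilde A(R)$, and the observation that $H_0(M)$ is a quotient of $M/U(M)=\tor_0^R(\bar R,M)$, this gives $\deg\tor_2^R(\bar R,M)\leq \tilde A(R)+\delta(M)\leq \delta(M)+2\tilde A(R)$.

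For the inductive step $i\geq 3$, the key observation is that $R[U]$ is naturally an $(\bar R,\bar R)$-bimodule: since $U$ is central in $R$, both left and right multiplication by $U$ annihilate $R[U]$, and hence the $R$-bimodule structure factors through $\bar R$. I will then invoke the Cartan--Eilenberg change-of-rings spectral sequence for the quotient $R\to\bar R$,
$$E_2^{p,q}=\tor_p^{\bar R}\bigl(R[U],\,\tor_q^R(\bar R,M)\bigr)\;\Longrightarrow\;\tor_{p+q}^R(R[U],M).$$
By Lemma \ref{EVW4.5}, $\deg E_2^{p,q}\leq \deg R[U]+\deg\tor_q^R(\bar R,M)+p\deg\bar R\leq (p+1)\tilde A(R)+\deg\tor_q^R(\bar R,M)$, using $\deg R[U],\deg\bar R\leq \tilde A(R)$. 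The inductive hypothesis applied in the range $q\leq i-2$ yields $\deg\tor_q^R(\bar R,M)\leq \delta(M)+q\tilde A(R)$ (with the $q\tilde A(R)$ summand absent for $q\leq 1$), and summing along any antidiagonal $p+q=i-2$ produces the uniform bound $\deg E_\infty^{p,q}\leq (i-1)\tilde A(R)+\delta(M)$. Since $\tor_i^R(\bar R,M)\cong \tor_{i-2}^R(R[U],M)$ is filtered by the $E_\infty^{p,q}$ with $p+q=i-2$, I conclude $\deg\tor_i^R(\bar R,M)\leq \delta(M)+i\tilde A(R)$, closing the induction.

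The main technical hurdle is setting up the Cartan--Eilenberg spectral sequence in the noncommutative, $\ZZ$-graded setting and checking that its $E_2$ term is bigraded in the way required for Lemma \ref{EVW4.5} to apply cleanly. Should one wish to avoid spectral sequences altogether, a parallel route is available: using Lemma \ref{res_deg}, iteratively resolve $R[U]$ by a free $\bar R$-resolution $\bar F_\bullet\to R[U]$ whose $j$-th term is generated in degrees $\leq (j+1)\tilde A(R)$, then dimension-shift through the long exact $\tor^R(-,M)$ sequences of the associated short exact sequences of syzygies; the degree bookkeeping, using $\tor_n^R(\bar F_j,M)=\bigoplus\tor_n^R(\bar R,M)[k]$ and the induction hypothesis on $\tor_n^R(\bar R,M)$, yields the same estimate.
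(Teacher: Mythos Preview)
Your proof is correct and follows essentially the same strategy as the paper, just packaged a bit differently. The paper builds a single (non-projective) resolution $P_\bullet \to \bar R$ with $P_0=R$, $P_1=R[\deg U]$, and $P_i$ for $i\ge 2$ a free $\bar R$-resolution of $R[U][\deg U]$ obtained from Lemma~\ref{res_deg}; it then runs one hyperhomology spectral sequence for $P_\bullet \otimes_R F_\bullet$ and bounds the terms $\tor_a^R(P_b,M)$ by writing each $P_b$ (for $b\ge 2$) as a sum of shifted copies of $\bar R$ and invoking the induction hypothesis. You instead separate the argument into two steps: first an explicit dimension shift $\tor_i^R(\bar R,M)\cong \tor_{i-2}^R(R[U],M)$ via the two short exact sequences, and then the change-of-rings spectral sequence of Remark~\ref{tor} together with Lemma~\ref{EVW4.5}. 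These are two views of the same computation (your ``alternative route'' in the last paragraph is in fact exactly what the paper does), and your modular presentation even yields the slightly sharper intermediate bound $\delta(M)+(i-1)\tilde A(R)$ before relaxing to $\delta(M)+i\tilde A(R)$.
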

\begin{proof}
The first claim is true by \eqref{deltaM} (the  definition of $\delta (M)$),
the second one will be proved by induction.
To this aim, we consider a resolution $P_\bullet$ of $\bar R$ by graded right $R$-modules
(which is not a free or projective resolution), with
\[
P_0 = R, \quad P_1= R[\deg U], \quad
P_1 \stackrel{U}{\longrightarrow} P_0 \longrightarrow \bar R
\longrightarrow 0
\]
(note the degree shift for $P_1$ which is needed to make the multiplication map
by $U$  of degree $0$).
The kernel of the map $P_1 \to P_0$ is then $R[U][\deg U]$, which is indeed a right $\bar R$-module
of degree $\deg U +\deg R[U]$, therefore by Lemma \ref{res_deg}  it has a resolution,
\[
 \ldots {\longrightarrow} P_3 \longrightarrow P_2
\longrightarrow R[U][\deg U] \, ,
\]
by free right $\bar R$-modules $P_i$, which are
generated in degrees up to
\begin{equation}
\label{shift_bound}
\deg U +\deg R[U]+(i-2)\deg \bar R \, .
\end{equation}
Note that any such $P_i$, for $i\geq 2$, is  a direct sum of copies of $\bar R$ shifted by at most 
$\deg U +\deg R[U]+(i-2)\deg \bar R$, hence its degree is at most
\[
\deg U +\deg R[U]+(i-1)\deg \bar R \, .
\]
Moreover $P_i$ is a right graded $R$-module via the quotient homomorphism $R\to\bar R$, so
$P_\bullet \to \bar R$ is a resolution by 
graded right $R$-modules generated in
degree bounded by the formulas above.

Let now $F_\bullet\to M$ be a free resolution by left $R$-modules.
We get a double complex $P_\bullet \otimes_R F_\bullet$, from which
two spectral sequences can be obtained. One degenerates at the second page:
\[
\begin{array}{ccccccccccc}
E^1_{ij} & = & H_i(P_\bullet\otimes F_j)  =  H_i(P_\bullet)\otimes F_j
 &\cong&  \left\{ \begin{array}{cr} \bar R \otimes F_j   , \,  i=0 \\ 0   , \,   i>0 \end{array} \right.
\\
E^2_{ij} & = & \left\{ \begin{array}{cr} H_i(\bar R \otimes F_j )  , \,  i=0 \\ 0  , \,  i>0 \end{array} \right.
 &=&  \left\{ \begin{array}{cr} \tor_i^R (\bar R,M)  , \,  i=0 \\ 0  , \,  i>0 \end{array} \right.
\end{array}
\]
and hence abuts to $\tor_{i+j}^R(\bar R,M)$. For the other one we have, accordingly:
\[
E^1_{ab} \quad = \quad H_a(P_b\otimes F_\bullet) 
\quad = \quad \tor_a^R(P_b, M)
\quad \implies \quad \tor_{a+b}^R(\bar R,M) \, .
\]
Thus for some filtration on $\tor_{i}^R(\bar R,M)$, the associated graded
module is a sub-quotient of 
\[
\bigoplus_{a+b=i} \tor^R_a(P_b,M) \, .
\]
Since  $P_0 = P_1$ are free $R$-modules, $\tor^R_a(P_b,M) = 0$ for $a>0$ and $b=0, 1$.
When $b\geq2$,
\begin{eqnarray*}
\deg \tor^R_a(P_b,M) & \leq &  
\deg U +\deg R[U]+(b-2)\deg \bar R + \deg\tor^R_a(\bar R, M) \, .
\end{eqnarray*}
The last contribution, $\deg\tor^R_a(\bar R, M)$, is due to $P_b$ being 
a sum of copies of $\bar R$ up to shifts, 
while the other contributions are due to the bound \eqref{shift_bound} on that shift.
It follows from this that a bound on $\deg \left( \tor_{i}^R(\bar R,M) \right)$ is given by the maximum over $a=0, \ldots ,i-2$
of the expression on the right hand side:
\[
\max \{  \deg U +\deg R[U]+(i-a-2)\deg \bar R + \deg\tor^R_a(\bar R, M) \, | \, a=0, \ldots ,i-2 \} \, .
\]
By the induction hypothesis, 
$$
\deg \tor^R_a(\bar R, M) \leq \delta (M) + (i-2) \max \{ \deg (U), \deg (R[U]), \deg (\bar R) \} \, , \quad {\rm for} \quad 0\leq a
\leq  i-2 \, .
$$ 
Combining this with the following inequalities
$$
\deg U , \deg R[U] , \deg \bar R \leq \max \{ \deg (U), \deg (R[U]), \deg (\bar R) \}
$$
the claim follows.
\end{proof}

\begin{lemma}\label{EVW 4.8}
Let $\bar M$ be a graded left $\bar R$-module, then 
\[
\deg \bar M \quad \leq \quad \deg( \ZZ \otimes_{\bar R} \bar M)
+ \deg \bar R
\]
\end{lemma}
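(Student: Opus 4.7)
\medskip

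The plan is to interpret the tensor product $\ZZ \otimes_{\bar R} \bar M$ as a quotient of $\bar M$ by an augmentation-type ideal, and then argue as in the proof of Lemma \ref{C1}.

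First I would identify $\ZZ$ with $\bar R / \bar R_{>0}$, where $\bar R_{>0}$ is the image in $\bar R$ of the two-sided ideal $R_{>0}\subset R$ (note that $\bar R_0 = \ZZ$ because $U$ has positive degree, so $UR$ has no component in degree $0$). Tensoring the short exact sequence of right $\bar R$-modules
\[
0 \to \bar R_{>0} \to \bar R \to \ZZ \to 0
\]
on the right with $\bar M$ yields the identification
\[
\ZZ \otimes_{\bar R} \bar M \;\cong\; \bar M / \bar R_{>0} \bar M \, .
\]

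Next, set $d := \deg (\ZZ \otimes_{\bar R} \bar M) = \deg (\bar M / \bar R_{>0}\bar M)$, which we may assume finite (otherwise the claim is trivial). The proof of Lemma \ref{C1} carries over verbatim from $R$ to $\bar R$: the condition $\deg (\bar M / \bar R_{>0}\bar M) \leq d$ is equivalent to saying that $\bar M$ is generated as a graded left $\bar R$-module by homogeneous elements of degree $\leq d$.

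Hence every homogeneous element $m \in \bar M$ admits an expression $m = \sum_{i} \bar r_i m_i$ with $\bar r_i \in \bar R$ and $m_i \in \bar M$ homogeneous of degree $\leq d$. Since $\bar r_i m_i$ can be nonzero only when $\deg \bar r_i \leq \deg \bar R$, we obtain $\deg (\bar r_i m_i) \leq \deg \bar R + d$, and therefore $\deg m \leq \deg \bar R + d$. Taking the supremum over $m$ gives the desired inequality
\[
\deg \bar M \;\leq\; \deg (\ZZ \otimes_{\bar R} \bar M) + \deg \bar R \, .
\]

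There is essentially no obstacle here: the argument is the graded analogue of Nakayama's lemma, and the only step that uses anything beyond formal manipulations is the adaptation of Lemma \ref{C1} to $\bar R$-modules, which requires only that $\bar R$ is a non-negatively graded ring with $\bar R_0 = \ZZ$.
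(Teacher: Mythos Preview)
Your proof is correct and follows essentially the same approach as the paper: both identify $\ZZ \otimes_{\bar R} \bar M$ with $\bar M/\bar R_{>0}\bar M$ and then bound degrees using that $\bar R$ itself has finite degree. Your version is slightly cleaner, invoking the analogue of Lemma~\ref{C1} for $\bar R$-modules directly, whereas the paper unwinds the same generation statement via an iterative descent argument; the substance is identical.
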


\begin{proof}
Under the identification $\ZZ = \bar R / {\bar R}_{>0}$,  we have the following exact sequence:
$$
{\bar R}_{>0} \otimes_{\bar R} \bar M \to \bar R \otimes_{\bar R} \bar M \to \ZZ \otimes_{\bar R} \bar M \to 0 \, .
$$
For an element $x\in \bar M$ of $\deg (x) > \deg( \ZZ \otimes_{\bar R} \bar M) + \deg \bar R$,
we have that $1\otimes x = 0$ in  $\ZZ \otimes_{\bar R} \bar M$, therefore $x=a' x'$, for some 
$a' \in {\bar R}_{>0}$ and $x' \in \bar M$ of $\deg (x')< \deg (x)$. Note that
either $\deg (a')> \deg (\bar R)$, or $\deg (x')> \deg (\ZZ \otimes_{\bar R} \bar M)$.
In the first case $x=0$ and in the second one (by the previous argument) $x'=a'' x''$, hence 
$x= a'a''x''$, with $\deg (x'')<\deg (x')$. We have again two cases: 
$\deg (a'a'')> \deg (\bar R)$, or $\deg (x'')> \deg (\ZZ \otimes_{\bar R} \bar M)$.
In the first case $x=0$, in the second one we repeat the previous argument.
The process terminates since at each step we get an element of $\bar M$ of degree less than the previous one,
and finally we have $x=\tilde a \tilde x$, with $\tilde a \in \bar R$ of degree $> \deg (\bar R)$, therefore $x=0$. 
\end{proof}

\begin{lemma}\label{EVW4.9}
Let $M$ be a graded left $R$-module, then
$$
\delta (M) \leq \max \left\{ \deg H_0(M) \, , \deg H_1 (M) \right\} +4A(R) \, .
$$
\end{lemma}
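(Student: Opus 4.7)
The plan is to run the change–of–rings (Grothendieck) spectral sequence for the composition of right exact functors
\[
\ZZ \otimes_R M \;=\; \ZZ \otimes_{\bar R} \bigl( \bar R \otimes_R M\bigr),
\]
and then to use the earlier Lemmas \ref{EVW4.5} and \ref{EVW 4.8} as ``transfer'' tools. The crucial observation is that $\bar R\otimes_R F$ is a free (hence flat) $\bar R$--module whenever $F$ is a free $R$--module, so the first functor sends projectives to acyclics for the second. Taking a free $R$--resolution $F_\bullet\to M$ and a free $\bar R$--resolution of each $\bar R\otimes_R F_i$ (à la Cartan--Eilenberg) produces a first–quadrant spectral sequence
\[
E^{2}_{p,q} \;=\; \tor_p^{\bar R}\bigl(\ZZ,\, N_q\bigr) \;\Longrightarrow\; \tor_{p+q}^R(\ZZ,M)\;=\;H_{p+q}(M),
\qquad N_q := \tor_q^R(\bar R,M).
\]

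With this machinery in hand, I first bound $\deg N_0 = \deg(\bar R\otimes_R M)$. The edge of the spectral sequence identifies
\[
\tor_0^{\bar R}(\ZZ,N_0) \;=\; \ZZ\otimes_{\bar R}(M/UM) \;=\; M/R_{>0}M \;=\; H_0(M),
\]
so $\deg H_0(N_0)\le \deg H_0(M)$. Lemma \ref{EVW 4.8} applied to the $\bar R$--module $N_0$ then yields
\[
\deg N_0 \;\le\; \deg H_0(M) + \deg \bar R.
\]

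Next I bound $\deg N_1 = \deg \tor_1^R(\bar R,M)$ using the five-term exact sequence coming from the filtration of $H_1(M)$. With the standard differentials $d_r\colon E^r_{p,q}\to E^r_{p-r,q+r-1}$, the only non-trivial $d_2$ affecting the anti-diagonal $p+q=1$ is
\[
d_2\colon \tor_2^{\bar R}(\ZZ,N_0)\;\longrightarrow\;\tor_0^{\bar R}(\ZZ,N_1)\;=\;H_0(N_1),
\]
and the resulting exact sequence reads
\[
H_2(M)\longrightarrow \tor_2^{\bar R}(\ZZ,N_0)\stackrel{d_2}{\longrightarrow} H_0(N_1)\longrightarrow H_1(M)\longrightarrow \tor_1^{\bar R}(\ZZ,N_0)\longrightarrow 0.
\]
Hence $\deg H_0(N_1)\le \max\bigl\{\deg H_1(M),\,\deg \tor_2^{\bar R}(\ZZ,N_0)\bigr\}$. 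The second term is controlled by Lemma \ref{EVW4.5}:
\[
\deg \tor_2^{\bar R}(\ZZ,N_0) \;\le\; \deg\ZZ + \deg N_0 + 2\deg\bar R \;\le\; \deg H_0(M) + 3\deg \bar R.
\]

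Finally, another application of Lemma \ref{EVW 4.8} to $N_1$ gives
\[
\deg N_1 \;\le\; \deg H_0(N_1) + \deg \bar R \;\le\; \max\bigl\{\deg H_1(M),\; \deg H_0(M) + 3\deg\bar R\bigr\} + \deg \bar R,
\]
so combining with the bound on $\deg N_0$ and using $\deg \bar R \le A(R)$ (by definition of $A(R)$) we obtain
\[
\delta(M)=\max\{\deg N_0,\deg N_1\}\;\le\;\max\{\deg H_0(M),\deg H_1(M)\} + 4A(R),
\]
which is the desired inequality. The main technical obstacle I anticipate is cleanly setting up the Cartan--Eilenberg spectral sequence in the graded setting with $\ZZ$-coefficients (rather than a field, as in \cite{e-v-w}) and verifying that the edge identification $\tor_0^{\bar R}(\ZZ,N_0)=H_0(M)$ and the differential $d_2$ track the gradings correctly; everything else is bookkeeping with the degree estimates from the preceding lemmas.
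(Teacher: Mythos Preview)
Your proof is correct and follows essentially the same route as the paper: both use the Grothendieck change-of-rings spectral sequence $E^2_{p,q}=\tor_p^{\bar R}(\ZZ,\tor_q^R(\bar R,M))\Rightarrow H_{p+q}(M)$, extract the five-term exact sequence to bound $\deg H_0(N_1)$, and then feed this into Lemma~\ref{EVW 4.8} together with Lemma~\ref{EVW4.5}. The only cosmetic difference is that you bound $\deg N_0$ via the edge identification $\ZZ\otimes_{\bar R}N_0\cong H_0(M)$ plus Lemma~\ref{EVW 4.8}, whereas the paper quotes Lemma~\ref{4.4/Z} directly; both yield $\deg N_0\le \deg H_0(M)+\deg\bar R$.
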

\begin{proof}
Since $\tor^R_0(\bar R , M)$ is isomorphic to $\bar R \otimes_{R}M$, they have the same degree,
and applying Lemma \ref{4.4/Z} we have:
$$
\deg \tor^R_0(\bar R , M) = \deg (\bar R \otimes_{R}M) \leq \deg \bar R + \deg H_0(M) \, .
$$
Therefore it remains to bound $\deg \tor^R_1(\bar R , M)$. To this aim we first apply Lemma \ref{EVW 4.8} and we 
obtain the following inequality:
\begin{equation}\label{star in lemma 3.15}
\deg \tor^R_1(\bar R , M) \leq \deg( \ZZ \otimes_{\bar R} \tor^R_1(\bar R , M)) + \deg \bar R \, .
\end{equation}
To bound $\deg( \ZZ \otimes_{\bar R} \tor^R_1(\bar R , M))$ we apply Grothendieck's spectral sequence 
(\cite[Corollary 5.8.4]{Weibel}) to the following right exact functors:
\begin{eqnarray*}
G\colon \left( \mbox{Graded left $R$-modules} \right) &\to& \left( \mbox{Graded left $\bar R$-modules} \right) \\
M &\mapsto&  \bar R \otimes_R M \, 
\end{eqnarray*}
and
\begin{eqnarray*}
F\colon \left( \mbox{Graded left $\bar R$-modules} \right) &\to& \left( \mbox{Graded $\ZZ$-modules} \right) \\
\bar M &\mapsto& \ZZ \otimes_{\bar R} \bar M   \, .
\end{eqnarray*}
There is a convergent first quadrant homology spectral sequence for each  graded left $R$-module $M$:
$$
E^2_{pq} = (L_pF)(L_qG)(M) \Rightarrow L_{p+q}(F\circ G)(M) \, .
$$
Since $L_qG(M)= \tor_q^R(\bar R , M)$, $L_pF(\bar M) =\tor_p^{\bar R}(\ZZ , \bar M)$  
and $L_{p+q}(F\circ G)(M)=\tor_{p+q}^R(\ZZ , M)$, this spectral sequence reads as follows:
$$
E^2_{pq} =\tor^{\bar R}_p \left( \ZZ , \tor_q^R(\bar R, M) \right) \Rightarrow \tor_{p+q}^R(\ZZ , M) \, .
$$
The exact sequence of low degree terms associated to this spectral sequence is:
\begin{eqnarray}\label{esldt}
&&\tor_2^R(\ZZ , M) \to \tor_2^{\bar R}(\ZZ , \bar R \otimes_R M) \to \ZZ \otimes_{\bar R} \tor_1^R(\bar R, M) \to \nonumber \\ 
&&\to \tor_1^R(\ZZ , M)
\to \tor_1^{\bar R}(\ZZ , \bar R \otimes_R M) \to 0 \, . 
\end{eqnarray}
From this it follows the inequality
\begin{equation*}
 \deg \left( \ZZ \otimes_{\bar R} \tor_1^R(\bar R , M) \right)
 \leq
  \max \left\{ \deg \tor_1^R(\ZZ , M) , \deg \tor_2^{\bar R}(\ZZ , \bar R \otimes_R M) \right\} \, ,
\end{equation*}
which, combined with \eqref{star in lemma 3.15} and using our Notation \ref{notation degM}, ii), yields:
$$
\deg \tor^R_1(\bar R , M) \leq  \deg \bar R +  \max \left\{ \deg H_1(M) , \deg \tor_2^{\bar R}(\ZZ , \bar R \otimes_R M) \right\} \, .
$$
Moreover, 
\begin{eqnarray*}
\deg \tor_2^{\bar R}(\ZZ , \bar R \otimes_R M) &\leq& \deg (\bar R \otimes_R M) +2\deg \bar R  
\hspace{3cm} \text{(Lemma \ref{EVW4.5})} \\
&\leq& \deg \bar R + \deg H_0(M) +2\deg \bar R \hspace{2cm} \text{(Lemma \ref{4.4/Z})}\\
&=& 3\deg \bar R + \deg H_0(M) \, .
\end{eqnarray*}
Combining the previous inequalities we obtain the following ones, that prove the claim:
\begin{eqnarray*}
\delta (M) &\leq& \max \left\{ \deg \bar R + \deg H_0(M) , \deg \bar R+ \max \{ \deg H_1(M), 3\deg \bar R + \deg H_0(M) \} \right\} \\
&=& \deg \bar R + \max \left\{ \deg H_0(M), \max \{ \deg H_1(M), 3\deg \bar R + \deg H_0(M) \} \right\} \\
&\leq& \deg \bar R + \max \left\{ \deg H_1(M), 3\deg \bar R + \deg H_0(M) \right\} \\
&\leq& 4\deg \bar R + \max \left\{ \deg H_1(M),  \deg H_0(M) \right\} \, .
\end{eqnarray*}
\end{proof}

\begin{rem}\label{tor}Let be given a ring homomorphism $R\to S$, a right $S$-module $N$
and a left $R$-module $M$.

Let $P_\bullet\to M$ be a free $R$-resolution of $M$ and
$Q_\bullet\to N$ a free $S$-resolution of $N$.
Consider the double complex $Q_\bullet \otimes_S (S \otimes_R P_\bullet)$:
one spectral sequence has $E^1$-page with entries
\[
H_p(Q_\bullet) \otimes_S (S \otimes_R P_q) \quad = \quad
\begin{cases}
N \otimes_S (S \otimes_R P_q) & \text{if } p=0  \\
0 & \text{if } p>0.
\end{cases}
\]
That spectral sequence degenerates at its $E^2$-page with all entries $0$ except 
when $p=0$, where:
\[
H_q(N \otimes_S (S \otimes_R P_\bullet)) \quad = \quad
H_q(N \otimes_R P_\bullet) \quad = \quad
\tor_q^R(N,M).
\]
The other spectral sequence has $E^1$-page with entries
\[
Q_i \otimes_S H_j(S \otimes_R P_\bullet) \quad = \quad
Q_i \otimes_S \tor_j^R(S,M).
\]
The corresponding $E^2$-page has entries
\[
H_i(Q_\bullet \otimes_S \tor_j^R(S,M)) \quad = \quad
\tor_i^S(N,\tor_j^R(S,M)).
\]
Combining this with the first degenerating one implies
\[
\tor_i^S(N,\tor_j^R(S,M)) \quad \implies \quad
\tor_{i+j}^R(N,M).
\]
\end{rem}

\begin{prop}\label{EVW4.10}
Let $M$ be a graded left $R$-module and let $N$ be a graded right $R$-module.
Then, for every $i\geq 0$,
\[
\deg \left( {\rm Tor}_i^R \left( N , M \right) \right) \leq \deg (N) + \max \{ \deg \left( H_0(M) \right), \deg \left( H_1(M) \right) \}
+ 4A(R) + i \tilde{A}(R)  \, .
\]
\end{prop}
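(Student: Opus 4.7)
The plan is to combine the change--of--rings spectral sequence of Remark~\ref{tor} with a $U$-adic filtration of $N$. By Lemma~\ref{EVW4.9}, it suffices to establish the sharper bound
\[
\deg\tor_i^R(N,M)\leq \deg N+\delta(M)+i\,\tilde A(R),
\]
since Lemma~\ref{EVW4.9} then replaces $\delta(M)$ by $\max\{\deg H_0(M),\deg H_1(M)\}+4A(R)$. We may moreover assume $\deg N<\infty$, as otherwise the claim is vacuous.

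Since $\llbracket 1,1\rrbracket$ is central in $R$ (Remark~\ref{11central}), the subsets $NU^k\subseteq N$ are right $R$-submodules, yielding a decreasing filtration whose successive quotients $\overline{N_k}:=NU^k/NU^{k+1}$ are annihilated by right multiplication by $U$ and hence inherit the structure of right $\bar R$-modules. Because $\deg U\geq 1$ and $\deg N<\infty$, the filtration terminates at some $NU^K=0$, and clearly $\deg\overline{N_k}\leq \deg N$ for every $k$.

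For each right $\bar R$-module $\tilde N=\overline{N_k}$, the spectral sequence of Remark~\ref{tor} applied to $R\to \bar R$ reads
\[
E^2_{p,q}=\tor_p^{\bar R}\bigl(\tilde N,\tor_q^R(\bar R,M)\bigr)\ \Longrightarrow\ \tor_{p+q}^R(\tilde N,M).
\]
Applying Lemma~\ref{EVW4.5} to the $E^2$-page and Lemma~\ref{EVW4.7} to bound $\deg\tor_q^R(\bar R,M)\leq \delta(M)+q\tilde A(R)$, together with the inequality $\deg\bar R\leq\tilde A(R)$ built into Notation~\ref{notation degM}, one obtains
\[
\deg E^2_{p,q}\leq \deg\tilde N+\delta(M)+q\,\tilde A(R)+p\,\deg\bar R\leq \deg\tilde N+\delta(M)+(p+q)\,\tilde A(R),
\]
so that $\deg\tor_i^R(\overline{N_k},M)\leq \deg N+\delta(M)+i\,\tilde A(R)$. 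Iterating the long exact sequences attached to $0\to NU^{k+1}\to NU^k\to\overline{N_k}\to 0$ downward from $NU^K=0$ then yields
\[
\deg\tor_i^R(N,M)\leq \max_{0\leq k<K}\deg\tor_i^R(\overline{N_k},M)\leq \deg N+\delta(M)+i\,\tilde A(R).
\]

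The main obstacle I anticipate is precisely the mismatch that Remark~\ref{tor} demands a right $\bar R$-module while $N$ is only a right $R$-module. The $U$-adic filtration is the device that converts the general case into the $\bar R$-module case; the fact that this filtration terminates is exactly what forces the preliminary reduction to $\deg N<\infty$, and once termination is secured the iteration of long exact sequences is routine.
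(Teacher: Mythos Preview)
Your proof is correct and follows the same overall strategy as the paper: reduce to right $\bar R$-modules, apply the change-of-rings spectral sequence of Remark~\ref{tor}, and then combine Lemmas~\ref{EVW4.5}, \ref{EVW4.7}, and \ref{EVW4.9} to bound the degrees of the $E^2$-terms. The only difference lies in the filtration used to achieve the reduction. The paper proceeds by induction on $\deg N$, peeling off the top graded piece $N_a$ at each step (which satisfies $N_a\cdot U=0$ because $U$ has positive degree) and handling the quotient $N/N_a$ by the inductive hypothesis. You instead use the $U$-adic filtration $N\supseteq NU\supseteq NU^2\supseteq\cdots$, whose successive quotients are annihilated by $U$ by construction. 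Both filtrations terminate for the same reason ($\deg U\geq 1$ and $\deg N<\infty$), and in both cases the passage from the graded pieces back to $N$ is a routine iteration of long exact sequences. Your choice is arguably the more conceptual one, since it filters directly by the obstruction---the $U$-action---rather than by degree; the paper's degree filtration has the mild advantage that the induction is on a numerical invariant, making the termination slightly more explicit.
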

\begin{proof}
Clearly, if $\deg (N)= \infty$, the claim holds true. Hence we assume that $\deg (N) < \infty$ and we proceed by 
induction on $\deg (N)$. 

If $\deg (N) = 0$, then $N \cdot U = 0$ and so $N$ has a structure of graded right $\bar{R}$-module, where $\bar{R} =R/UR$.
By Remark \ref{tor},
which is a full analogue of the  base-change Theorem for Tor (cf. \cite[Thm. 5.6.6.]{Weibel}),  we have  a first quadrant spectral sequence
with $E^2_{p, q} = \tor_p^{\bar{R}} \left( N,\tor_q^R \left(\bar{R},M \right) \right)$ and  
\[
\tor_p^{\bar{R}} \left( N,\tor_q^R(\bar{R},M) \right) \, \Rightarrow \, \tor_{p+q}^R \left( N,M \right) .\, 
\]
Since the degree of a graded  filtered module is the maximum of the degrees
of the graded quotients,  the above spectral sequence argument shows that 

\[
\deg \left( \tor_{i}^R \left( N,M \right) \right) \leq \max \left\{ \deg \left( \tor_p^{\bar{R}} \left( N,\tor_q^R \left( \bar{R},M \right) \right)
\right) \, | \, p+q = i  \right\} \, . 
\]
Applying Lemma \ref{EVW4.5}, we have that the right hand side of the previous inequality is 
\begin{eqnarray*}
&\leq & \max \left\{ \deg (N) + \deg \left( \tor_q^R \left( \bar{R},M \right) \right) + p \deg (\bar{R}) \, | \, p+q=i \right\}  \\
&= &  \max \left\{  \deg \left( \tor_q^R \left( \bar{R},M \right) \right) + p \deg (\bar{R}) \, | \, p+q=i \right\} \, .
\end{eqnarray*}
Using Lemma \ref{EVW4.7} we can bound the previous expression as
\begin{eqnarray*}
&\leq & \max \left\{  \delta (M) + q \tilde{A}(R) + p \deg (\bar{R}) \, | \, p+q=i \right\} \\
&\leq & \delta (M) + i \tilde{A}(R) \, ,
\end{eqnarray*}
where we used that  $\deg \bar R\leq A (R)  \leq \tilde{A}(R) $.
The claim, in the case where $\deg (N) =0$, follows from Lemma \ref{EVW4.9}.

In the general case, 
let us set $a:= \deg (N)$ and assume that the statement holds true for every graded right $R$-module  of degree $<a$.
Consider the short exact sequence 
\begin{equation}\label{ses4.10}
0 \to N_a \to N \to N/N_a \to 0 \, ,
\end{equation}
where $N_a \subseteq N$ is the summand of degree $a$ of $N$, placed in degree $a$. 
By the  inductive assumption
\[
\deg \left( {\rm Tor}_i^R \left( N/N_a , M \right) \right) \leq a-1 + \max \{ \deg \left( H_0(M) \right), \deg \left( H_1(M) \right) \}
+ 4A(R) + i \tilde{A}(R) \, .
\]
Moreover, $N_a \cdot U =0$,   hence  $N_a $  has a structure of graded right $\bar{R}$-module. 
The same proof of the case where $\deg (N) =0$ yields that 
\[
\deg \left( {\rm Tor}_i^R \left( N_a , M \right) \right) \leq a + \max \{ \deg \left( H_0(M) \right), \deg \left( H_1(M) \right) \}
+ 4A(R) + i \tilde{A}(R) \, .
\]
The inductive statement   follows then from the long exact sequence  for $\tor_\bullet ^R \left( \_ , M \right)$
associated to \eqref{ses4.10}, and from the  principle that  the degree of a graded   module
appearing in an exact sequence is smaller or equal to
  the maximum of the degrees
of the graded modules appearing on the right, respectively on the left of it.

\end{proof}
Let us now consider the  $\mathcal{K}$-complex of $\mathcal{K}(R)$.

\begin{lemma}\label{R>0killsHKR}
For every $x\in R_{>0}$, the right multiplication by $x$ on  $R$ gives a homomorphism of chain complexes
$\mathcal{K}(R) \to \mathcal{K}(R)$, which induces the zero-map in homology.  
\end{lemma}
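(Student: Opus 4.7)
The plan is to exhibit, for each $x\in R_{>0}$, an explicit chain-level null-homotopy for $R_x$; this immediately implies vanishing on homology. Right multiplications are additive, $R_{x+y}=R_x+R_y$, and compose contravariantly, $R_{xy}=R_y\circ R_x$. Moreover every generator $\llbracket a_1,b_1,\dots,a_n,b_n\rrbracket$ of $R_n$ factors in $R$ as $\llbracket a_1,b_1\rrbracket\cdots\llbracket a_n,b_n\rrbracket$. It therefore suffices to treat a single $x=\llbracket a,b\rrbracket\in R_1$ and extend by $\ZZ$-linearity and composition.

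For such $x$ the natural candidate is the operator $h_p\colon\mathcal{K}(R)_p\to\mathcal{K}(R)_{p+1}$ which appends $(a,b)$ at position $p+1$ with sign $(-1)^p$:
\[
h_p\bigl((a_1,b_1,\dots,a_p,b_p)\otimes m\bigr)\;=\;(-1)^p\,(a_1,b_1,\dots,a_p,b_p,a,b)\otimes m.
\]
Expanding $dh_p+h_{p-1}d$ by the formula for the differential of $\mathcal{K}(R)$, the index $k=p+1$ of $d(h_p(\cdot))$ contributes $(a_1,\dots,b_p)\otimes\llbracket a,b\rrbracket m$ (since the associated $d_{p+1}$ equals $1$), while the $k\le p$ contributions combine into a ``twist-residual''
\[
(-1)^p\sum_{k=1}^{p}(-1)^{k-1}(a_1,\dots,\widehat{a_k,b_k},\dots,a_p,b_p,a,b)\otimes\bigl(\llbracket a_k^{e_k[a,b]},b_k^{e_k[a,b]}\rrbracket-\llbracket a_k^{e_k},b_k^{e_k}\rrbracket\bigr)m,
\]
where $e_k=[a_{k+1},b_{k+1}]\cdots[a_p,b_p]$.

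The main obstacle is that the computed $dh_p+h_{p-1}d$ differs from $R_x=(a_1,\dots,b_p)\otimes(m\cdot\llbracket a,b\rrbracket)$ in two ways: the leading term is a \emph{left} multiplication $\llbracket a,b\rrbracket m$, and there is an unwanted twist-residual. Both are handled by the quasi-commutativity of $R$ coming from the handle-swap element of $\Gamma^1_{n+1}$, which identifies
\[
\llbracket a,b,c_1,d_1,\dots,c_n,d_n\rrbracket\;=\;\llbracket c_1^{[a,b]^{-1}},d_1^{[a,b]^{-1}},\dots,c_n^{[a,b]^{-1}},d_n^{[a,b]^{-1}},a,b\rrbracket
\]
in $R_{n+1}$. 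This identification converts the left-multiplication term into a right multiplication composed with an interior conjugation by $[a,b]^{-1}$ on the pairs of $m$, and the same conjugation pattern is precisely what is needed to cancel the twist-residual. Assembling these cancellations carefully --- along the lines of \cite[Section~4]{e-v-w} but with $\ZZ$-coefficients --- yields the required identity $dh_p+h_{p-1}d=\pm R_x$, possibly after a secondary null-homotopy $g$ absorbing lower-order corrections so that the total null-homotopy $h+g$ satisfies $d(h+g)+(h+g)d=\pm R_x$. The passage from field to $\ZZ$ coefficients is routine because all the handle-swap relations hold at the chain level in $\mathcal{K}(R)$.
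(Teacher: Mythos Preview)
Your reduction to degree-$1$ generators $x=\llbracket a,b\rrbracket$ is fine, and your computation of $dh_p+h_{p-1}d$ is correct. The gap is that the result of that computation is \emph{not} right multiplication by $\llbracket a,b\rrbracket$: as you yourself note, the leading term is the \emph{left} action $\llbracket a,b\rrbracket m$, and there is a non-trivial twist-residual. You then assert that the handle-swap relation repairs both discrepancies ``possibly after a secondary null-homotopy $g$ absorbing lower-order corrections'', but you give no candidate for $g$ and no argument that such a $g$ exists. The handle-swap identity you quote lives in $R$; it lets you rewrite $\llbracket a,b\rrbracket m$ as $m'\cdot\llbracket a,b\rrbracket$ where $m'$ is $m$ with all pairs conjugated by $[a,b]^{-1}$, but $m'\ne m$ in general, and it is not at all clear that the difference $(m'-m)\llbracket a,b\rrbracket$ together with your twist-residual is a boundary. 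As written, the argument stops precisely at the point where the work begins.

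The source of the difficulty is the choice to \emph{append} $(a,b)$ at position $p{+}1$. Because the conjugating element $d_k$ in the differential depends on all pairs to the right of position $k$, appending alters every $d_k$ for $k\le p$; this is what produces the twist-residual. The paper (following \cite[4.11]{e-v-w}, which you cite) instead \emph{prepends} the pair at position $1$, but conjugated: one sets
\[
S_{(a,b)}\bigl((a_1,b_1,\dots,a_{p},b_{p})\otimes m\bigr)\;=\;(a^{\tau^{-1}},b^{\tau^{-1}},a_1,b_1,\dots,a_{p},b_{p})\otimes m,
\]
where $\tau=[a_1,b_1]\cdots[a_{p},b_{p}]\cdot(\text{commutator product of }m)$. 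With this choice the terms of $S_{(a,b)}d$ and $dS_{(a,b)}$ cancel on the nose except for the $k=1$ term of $dS_{(a,b)}$, and that single surviving term becomes $(a_1,\dots,b_p)\otimes m\cdot\llbracket a,b\rrbracket$ after \emph{one} Zimmermann move inside the $R$-coordinate. The twist by $\tau^{-1}$ is exactly calibrated so that no residual appears and so that the surviving term is a \emph{right} multiplication. I suggest redoing the homotopy with this prepended-and-twisted operator; the verification is then a direct cancellation with nothing left to ``absorb''.
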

\begin{proof}

The proof is similar but not identical  to the one in \cite[4.11]{e-v-w}.

A $\ZZ$-module basis of $\sK (R) _{p+1}$ is given by elements of the form
\[
(a_1,b_1,\dots, a_{p+1},b_{p+1}) \llbracket g_1,h_1,\dots,g_q,h_q \rrbracket 
\]
For a pair $(g,h)\in G\times G$ define a map $S_{(g,h)}:\sK (R) \to \sK (R) $ by
\[
(a_1,b_1,\dots,a_{p+1},b_{p+1}) \llbracket g_1,h_1,\dots,g_q,h_q \rrbracket 
 \mapsto 
(g^{\tau^{-1}},h^{\tau^{-1}},a_1,b_1,\dots,a_{p+1},b_{p+1}) \llbracket g_1,h_1,\dots,g_q,h_q \rrbracket 
\]
where $\tau$ is the product of all the commutators appearing, namely, 
\[
\tau \quad : = \quad [a_1,b_1]\cdots[a_{p+1},b_{p+1}][g_1,h_1]\cdots[g_q,h_q]
\]
and one can check that the product is the same, independent of the choice of   all representatives
of the equivalence class $ \llbracket g_1,h_1,\dots,g_q,h_q \rrbracket $.

Then it is a standard exercise to compute
\begin{eqnarray*}
& & \big( S_{(g,h)} d + d S_{(g,h)} \big)\bigg(
(a_1,b_1,\dots,a_{p+1},b_{p+1}) \llbracket g_1,h_1,\dots,g_q,h_q \rrbracket \bigg)\\
& = &
(a_1,b_1,\dots,a_{p+1},b_{p+1})
 \llbracket g^{\tau^{-1}[a_1,b_1]\cdots[a_{p+1},b_{p+1}]},h^{\tau^{-1}[a_1,b_1]\cdots[a_{p+1},b_{p+1}]},
g_1,h_1,\dots,g_q,h_q \rrbracket \\
& = &
(a_1,b_1,\dots,a_{p+1},b_{p+1})
 \llbracket g^{([g_1,h_1]\cdots[g_q,h_q])^{-1}
},h^{([g_1,h_1]\cdots[g_q,h_q])^{-1}
},
g_1,h_1,\dots,g_q,h_q \rrbracket \\
& = &
(a_1,b_1,\dots,a_{p+1},b_{p+1}) \llbracket g_1,h_1,\dots,g_q,h_q, g, h \rrbracket \\
& = &
(a_1,b_1,\dots,a_{p+1},b_{p+1}) \llbracket g_1,h_1,\dots,g_q,h_q \rrbracket \cdot
\llbracket g, h \rrbracket
\end{eqnarray*}
The first equality is a straightforward  application of the definitions of $S_{(g,h)}$ and $d$, followed by a cancellation
which leaves only out the first term of the sum corresponding to $dS_{(g,h)}$, 
which is exactly the term appearing  in the second row.\\
The third  equality is due to a sequence of Zimmermann moves (2.2, page 250  in  \cite{Zimmer}), which allows to swap adjacent
pairs of group elements, provided that one of them is conjugated by the commutator of the other.\\

Thus $S_{(g,h)}$ provides a chain homotopy between the zero map and 
the multiplication by $\llbracket g, h \rrbracket$ on the right.
The claim then follows, since $R_{>0}$ is generated by such elements.

\end{proof}

\begin{prop}\label{hpR}
For every $p\geq 0$ we have that 
$$
h_p(R) \leq p + A(R) +1 \, .
$$
\end{prop}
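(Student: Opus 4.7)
The plan is to combine the vanishing Lemma~\ref{R>0killsHKR} with the numerical fact encoded in $A(R)$: since both $R[U]$ and $R/UR$ have degree at most $A(R)$, the multiplication map $U\colon R_m\to R_{m+1}$ is an \emph{isomorphism} for every $m\geq A(R)+1$. The idea is to show that any class in sufficiently high degree can be written as a multiple of $U$, and hence vanishes by Lemma~\ref{R>0killsHKR}.

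Fix $p\geq 0$ and a class $\alpha\in H_p(\mathcal{K}(R))_n$ with $n\geq p+A(R)+2$; the goal is to show $\alpha=0$. I would represent $\alpha$ by a cycle $z\in \mathcal{K}(R)_{p,n}=\ZZ\langle G^{2p}\rangle\otimes R_{n-p}$. Since $n-p-1\geq A(R)+1$, the isomorphism $U\colon R_{n-p-1}\to R_{n-p}$ produces a unique $z'\in\mathcal{K}(R)_{p,n-1}$ with $z'\cdot\llbracket 1,1\rrbracket = z$.

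The main technical point, and what I would expect to need the most care, is that $z'$ is \emph{automatically} a cycle. Because $\llbracket 1,1\rrbracket$ is central (Remark~\ref{11central}), the differential $d$ commutes with right multiplication by it, so
\[
d(z')\cdot\llbracket 1,1\rrbracket \,=\, d(z'\cdot \llbracket 1,1\rrbracket) \,=\, d(z)\,=\,0.
\]
Now $d(z')\in\ZZ\langle G^{2(p-1)}\rangle\otimes R_{n-p}$, and since $n-p\geq A(R)+2>A(R)$, right multiplication by $U$ is injective on $R_{n-p}$, forcing $d(z')=0$. (For $p=0$ there is nothing to check since $\mathcal{K}(R)_{-1}=0$.)

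Hence $\alpha=[z]=[z']\cdot\llbracket 1,1\rrbracket$ in $H_p(\mathcal{K}(R))$, and Lemma~\ref{R>0killsHKR} gives $\alpha=0$ because $\llbracket 1,1\rrbracket\in R_{>0}$. This shows $H_p(\mathcal{K}(R))_n=0$ for all $n\geq p+A(R)+2$, which is exactly the claimed bound $h_p(R)\leq p+A(R)+1$. The whole proof rests on a single numerical input, namely that $U$ acts invertibly on $R_m$ for $m\geq A(R)+1$, used once to lift $z$ to $z'$ and once more to promote $d(z')\cdot U = 0$ to $d(z')=0$.
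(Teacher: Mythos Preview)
Your proof is correct and follows essentially the same approach as the paper's: both use that $U$ is an isomorphism on $R_m$ for $m\geq A(R)+1$ to lift a high-degree cycle along $U$, check that the lift is again a cycle via commutation of $d$ with $U$ and injectivity of $U$ one degree lower, and then kill the resulting class using Lemma~\ref{R>0killsHKR}. The only cosmetic difference is that the paper phrases the lifting step as ``$\ker(d)$ is generated as a right $R$-module in degrees $\leq p+A(R)+1$'' before invoking the vanishing lemma, whereas you apply the lemma directly to the single factorization $[z]=[z']\cdot\llbracket 1,1\rrbracket$.
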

\begin{proof}
 Direct inspection of the formulae in Definition 3.3, keeping in mind that $U$ acts on the ring element $m$, 
show indeed that  $d_q U = U d_q$ (see also Remark \ref{11central}). 

Hence we have  the following commutative diagram
\[
\begin{tikzcd}
\ZZ \langle G^{2p+2} \rangle \otimes R[p+1] \arrow{r}{d} & \ZZ \langle G^{2p} \rangle \otimes R[p] \arrow{r}{d}
&  \ZZ \langle G^{2p-2} \rangle \otimes R[p-1] \\
\ZZ \langle G^{2p+2} \rangle \otimes R[p+1] \arrow{u}{{\rm Id} \otimes U} \arrow{r}{d} & 
\ZZ \langle G^{2p} \rangle \otimes R[p] \arrow{u}{{\rm Id} \otimes U}  \arrow{r}{d}
&  \ZZ \langle G^{2p-2} \rangle \otimes R[p-1] \arrow{u}{{\rm Id} \otimes U} 
\end{tikzcd}
\]
where on each row  we have written the same part of the $\mathcal{K}$-complex of $R$.

Since $U\colon R_{{g'}-p} \to R_{{g'}-p+1}$ is an isomorphism for ${g'}-p> A(R)$, 
also ${\rm Id} \otimes U$ is an isomorphism for ${g'}-p> A(R)$.
By the commutativity of the diagram, the restriction 
${\rm Id} \otimes U \colon \ker (d) \to \ker (d)$ is an isomorphism in the same range. 
Therefore $\ker (d)$, as a right $R$-module, is generated by elements of degree $\leq p+A(R)+1$.
In particular, also $H_p(\mathcal{K}(R))$ is a right $R$-module generated by elements in degree $\leq p+A(R)+1$.
By the previous Lemma \ref{R>0killsHKR}, $H_p(\mathcal{K}(R))\cdot R_{>0} =0$, hence the claim follows. 
\end{proof}

{
\begin{lemma}\label{universalcoefficients}
Let $M$ be a left graded $R$-module. Then
\begin{equation*}
\deg H_p(\sK(M)) \leq 
\max\{\deg H_0(M), \deg H_1(M)\}
+ (5+p)  \tilde{A}(R)
+ \deg U \, , \quad \forall \, p\geq 0.
\end{equation*}
\end{lemma}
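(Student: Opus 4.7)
The plan is to realize $\mathcal{K}(M)$ as a derived tensor product so that its homology is computed by a spectral sequence whose $E^2$-page is directly controlled by Propositions \ref{hpR} and \ref{EVW4.10}.

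First I would note that each $\mathcal{K}(R)_p = \ZZ\langle G^{2p}\rangle \otimes R[p]$ is a free graded right $R$-module (acting by right multiplication on the $R[p]$-factor), and that the differentials of $\mathcal{K}(R)$---being given by left multiplication on the $R$-factor by the elements $\llbracket a_k^{d_k}, b_k^{d_k}\rrbracket$---commute with this right action, and are therefore morphisms of right $R$-modules. A direct comparison of gradings and differentials then yields a canonical isomorphism of chain complexes
\[
\mathcal{K}(R) \otimes_R M \;\cong\; \mathcal{K}(M).
\]

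Next I would pick a free resolution $F_\bullet \to M$ in graded left $R$-modules and form the double complex $C_{p,q} := \mathcal{K}(R)_p \otimes_R F_q$. Since each $\mathcal{K}(R)_p$ is flat, the spectral sequence obtained by first computing $q$-homology collapses to $H_n(\mathrm{Tot}(C)) = H_n(\mathcal{K}(M))$. The transposed spectral sequence, exploiting that tensoring with the free module $F_q$ commutes with $p$-homology, reads
\[
E^2_{p,q} \;=\; \tor^R_q(H_p(\mathcal{K}(R)), M) \;\Longrightarrow\; H_{p+q}(\mathcal{K}(M)).
\]

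The final step will be a degree estimate. Proposition \ref{hpR} gives $\deg H_p(\mathcal{K}(R)) \leq p + A(R) + \deg U$, and Proposition \ref{EVW4.10} applied to the right $R$-module $N = H_p(\mathcal{K}(R))$ then yields
\begin{align*}
\deg E^2_{p,q}
&\leq (p + A(R) + \deg U) + \max\{\deg H_0(M), \deg H_1(M)\} + 4A(R) + q\tilde{A}(R) \\
&\leq (p+q+5)\tilde{A}(R) + \deg U + \max\{\deg H_0(M), \deg H_1(M)\},
\end{align*}
where the second inequality uses $A(R) \leq \tilde{A}(R)$ and $p \leq p\tilde{A}(R)$ (valid since $\tilde{A}(R) \geq \deg U = 1$ by Remark \ref{11central}). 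Because $H_n(\mathcal{K}(M))$ is filtered with graded pieces sub-quotients of $\bigoplus_{p+q=n} E^2_{p,q}$, the maximum over $p+q = n$ of the right-hand side bounds $\deg H_n(\mathcal{K}(M))$, which, with $n=p$, gives the asserted inequality.

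I expect the main obstacle to be the careful verification of Step one: making precise that $\mathcal{K}(R)$ is genuinely a complex of free right $R$-modules and that $\mathcal{K}(R) \otimes_R M \cong \mathcal{K}(M)$ is an isomorphism of chain complexes respecting both the grading and the differential. Once this chain-level identification is in place, the double-complex argument is standard and the degree estimates are immediate consequences of the homological results already established.
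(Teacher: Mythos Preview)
Your proposal is correct and follows essentially the same argument as the paper: both identify $\mathcal{K}(M)\cong\mathcal{K}(R)\otimes_R M$ with $\mathcal{K}(R)$ a complex of free right $R$-modules, run the universal-coefficients spectral sequence from the double complex with a free resolution of $M$, and bound the $E^2$-terms via Propositions \ref{hpR} and \ref{EVW4.10} together with $A(R)\leq\tilde{A}(R)$. The only cosmetic difference is in how the final maximization over the diagonal $p+q=n$ is organized.
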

\begin{proof}
Setting as usual  $\sK(M): =\sK(R)\otimes_R M$
($\sK(R)$ is in fact a right and left $R$-module)
we consider (as in \cite[4.13]{e-v-w}) the universal coefficients spectral sequence
\[
\tor_i^R ( H_{p-i}(\sK(R)), M ) \implies H_p (\sK(M)).
\]
Here is how that spectral sequence argument works.
Given the chain complex $\sK(R)_\bullet$ of \emph{free} $R$-modules and the
$R$-module $M$, choose a free resolution $P_\bullet\to M$ of $M$ by $R$-modules.
Then the double chain complex $\sK(R)_\bullet \otimes_R P_\bullet$ gives rise to a
spectral sequence with $E^1$-page entries
\[
H_i(\sK(R)_\bullet ) \otimes_R P_j
\]
and $E^2$-page entries
\[
\tor^R_j (H_i(\sK(R)_\bullet ), M) \, .
\]
The double complex gives also rise to a second spectral sequence. The $E^1$-page entries
are given as follows and can be simplified, since each $\sK(R)_p$ is a free $R$-module.
\[
\tor^R_q(\sK(R)_p,M)\quad = \quad
\begin{cases}
\sK(R)_p\otimes_R M = \sK(M)_p & \text{if } q=0 \\
0 & \text{if } q>0
\end{cases}
\]
Therefore the entries of the corresponding $E^2$-page are
\[
E^2_{q,p} \quad = \quad 
\begin{cases}
H_p(\sK(M)_\bullet) & \text{if } q=0 \\
0 & \text{if } q>0
\end{cases}
\]
Hence the right hand side is filtered by sub-quotients of the modules on the left
hand side and it suffices to bound the degree of these simultaneously.

Then for  each left hand side term an upper bound is  obtained applying 
Proposition \ref{EVW4.10}:
\[
\deg H_{p-i}(\sK(R)) + \max\{ \deg H_0(M), \deg H_1(M) \} + 4\, A(R)
\\
+ i \tilde{A}(R) \, .
\]
Using Proposition \ref{hpR} the first term can be replaced by 
\[
p-i + \deg U + A(R)
\]
and the last term is $\geq 1$, hence
a common upper bound is obtained for $i=p$ and is therefore
\[
\deg U + \max\{ \deg H_0(M), \deg H_1(M) \} + 5\, A(R)
\\
+ p \tilde{A}(R) \, .
\]
The claim then follows using $A(R)\leq  \tilde{A}(R)$
and the  notation we set up.
\end{proof}

\begin{theo}\label{HAM}
Let $G$ be a finite group, let $R$ be the associated ring of connected components. 
Then, for a left graded $R$-module $M$,  the following hold true: 
\begin{itemize}
\item[1.] \begin{equation}\label{hpM}
h_p (M) \leq \max \{ h_0 (M), h_1 (M) \} + (5+p)\tilde{A}(R) +\deg (U) \, , \qquad \forall \, p \geq 0 \, ;
\end{equation}
\item[2.]   ${\rm U}\colon M_{g'}\to M_{{g'}+1}$ is an isomorphism   $\forall \, {g'}\geq \max \{ h_0 (M), h_1 (M) \} + 5A(R)+1$;
\item[3.]
$h_0(R)$ and $h_1(R)$ are both finite.
\end{itemize}
\end{theo}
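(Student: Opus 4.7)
The plan is to prove the three parts in order, leveraging Lemma \ref{universalcoefficients} and the homological bounds assembled earlier in the section. The main obstacle will be Part 1, because it requires comparing the Koszul homologies $h_i(M) = \deg H_i(\mathcal{K}(M))$ with the honest $\tor$-groups $H_i(M) = \tor_i^R(\ZZ, M)$; these are \emph{a priori} distinct objects.

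For Part 1 I would first invoke Lemma \ref{universalcoefficients} to obtain $h_p(M) \leq \max\{\deg H_0(M), \deg H_1(M)\} + (5+p)\tilde{A}(R) + \deg U$, and then replace $\deg H_0(M), \deg H_1(M)$ by $h_0(M), h_1(M)$. The identification $h_0(M) = \deg H_0(M)$ is immediate: the image of $d\colon \mathcal{K}(M)_1 \to M$ coincides with $R_1 M = R_{>0} M$ (using $R_{>0} = R_1 \cdot R$), so $H_0(\mathcal{K}(M)) = M/R_{>0}M = H_0(M)$. For the inequality $\deg H_1(M) \leq h_1(M)$ I would return to the spectral sequence driving Lemma \ref{universalcoefficients}, namely
\[
E^2_{i,j} = \tor_i^R(H_j(\mathcal{K}(R)), M) \; \Rightarrow \; H_{i+j}(\mathcal{K}(M));
\]
in the first quadrant the position $(1,0)$ receives and emits no non-trivial differentials on any page, so $E^\infty_{1,0} = E^2_{1,0} = \tor_1^R(\ZZ, M) = H_1(M)$ survives as a filtration piece of $H_1(\mathcal{K}(M))$, which gives the required degree bound.

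For Part 2 I would combine Lemma \ref{lemma 3.12} (which gives $A(M) \leq \delta(M) + A(R)$) with Lemma \ref{EVW4.9} (which gives $\delta(M) \leq \max\{\deg H_0(M), \deg H_1(M)\} + 4A(R)$), to conclude $A(M) \leq \max\{h_0(M), h_1(M)\} + 5A(R)$, using the comparisons from Part 1. By the very definition of $A(M)$, for every $n \geq A(M) + 1$ one has $(M[U])_n = 0$ and $(M/UM)_{n+1} = 0$, which together state exactly that $U\colon M_n \to M_{n+1}$ is both injective and surjective, hence an isomorphism.

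For Part 3, Proposition \ref{hpR} specialised to $p = 0, 1$ yields $h_0(R) \leq A(R) + 1$ and $h_1(R) \leq A(R) + 2$; these are finite because Remark \ref{A(R)finite} ensures that $U\colon R_n \to R_{n+1}$ is an isomorphism for all $n$ above some explicit threshold, so both $\deg R[U]$ and $\deg(R/UR)$ are finite, and therefore $A(R) < \infty$.
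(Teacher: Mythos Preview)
Your proposal is correct. Parts 2 and 3 follow exactly the paper's route (Lemma \ref{lemma 3.12} combined with Lemma \ref{EVW4.9}, respectively Proposition \ref{hpR} combined with Remark \ref{A(R)finite}).

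For Part 1 your argument differs from the paper's. Both first reduce to Lemma \ref{universalcoefficients} and the inequalities $\deg H_i(M)\le h_i(M)$ for $i=0,1$; the case $i=0$ is handled identically. For $i=1$ you invoke the K\"unneth spectral sequence $E^2_{p,q}=\tor_p^R(H_q(\mathcal K(R)),M)\Rightarrow H_{p+q}(\mathcal K(M))$ and observe that the corner term $E^2_{1,0}=\tor_1^R(\ZZ,M)=H_1(M)$ survives to $E^\infty$, hence embeds as a graded subquotient of $H_1(\mathcal K(M))$. The paper instead argues by hand: it factors $d\colon \mathcal K(M)_1\to M$ as $\beta\circ\alpha$ with $\alpha\colon \ZZ\langle G^2\rangle\otimes M[1]\to R_{>0}\otimes_R M$ surjective and $\ker\beta\cong H_1(M)$, then verifies directly (via a Zimmermann move) that $\alpha\circ d=0$ on $\mathcal K(M)_2$, so any homogeneous element of $H_1(M)$ in degree exceeding $h_1(M)$ lifts to a cycle that is a boundary and hence dies under $\alpha$. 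Your spectral-sequence argument is slicker and avoids the explicit Zimmermann computation; the paper's computation, on the other hand, makes the role of the ring structure of $R$ (in particular the relation $\llbracket a_2,b_2\rrbracket\llbracket a_1^{[a_2,b_2]},b_1^{[a_2,b_2]}\rrbracket=\llbracket a_1,b_1\rrbracket\llbracket a_2,b_2\rrbracket$) visible, which is conceptually informative even if not strictly needed here.
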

\begin{proof}
1. We prove that
\begin{equation}\label{star thm 3.21}
\deg (H_i(M)) \leq \deg H_i(\mathcal{K}(M)) \, , \quad {\rm for} \, i=0,1 \, ,
\end{equation}
then the claim follows directly from Lemma \ref{universalcoefficients} (recall that,
according to Notation \ref{notation degM}, $h_p(M)= \deg (H_p(\mathcal{K}(M))$).

Note that $H_0(M) \cong H_0(\mathcal{K}(M))$, hence \eqref{star thm 3.21} holds true when $i=0$.
To prove \eqref{star thm 3.21} for $i=1$ let us consider the final term of the $\mathcal{K}$-complex of $M$:
$$
\mathbb{Z} \langle G^2 \rangle \otimes M[1] \stackrel{d}{\longrightarrow} M \, ,
$$
and let us factor the boundary operator $d$ as follows:
$$
\mathbb{Z} \langle G^2 \rangle \otimes M[1] \stackrel{\alpha}{\longrightarrow} 
R_{>0} \otimes_R M \stackrel{\beta}{\longrightarrow} M \, ,
$$
where $\alpha$ sends $(a,b)\otimes m$ to $\llbracket a, b\rrbracket  \otimes m$, and $\beta$ sends $x\otimes m$ to $xm$.
Note that $\alpha$ is surjective (since, as $R$-module, $R_{>0}$ is generated by the elements
$\llbracket a, b\rrbracket $, for $(a, b) \in G^2$) and that it preserves the degrees. We  have also the following isomorphism:
\begin{equation}\label{** thm 3.21}
H_1(M) \cong \ker (\beta) \, ,
\end{equation}
which follows from the long exact sequence for $\tor^R_i(\_ , M)$ associated to 
the short exact sequence $0\to R_{>0} \to R \to \mathbb{Z} \to 0$.

On the other hand, we  always have the following exact sequence:
$$
\mathbb{Z} \langle G^4 \rangle \otimes M[2] \stackrel{d}{\longrightarrow} 
\ker (\beta \circ \alpha)  {\longrightarrow} H_1(\mathcal{K}(M)) \to 0 \, , 
$$
where $\mathcal{K}(M)_2 \stackrel{d}{\longrightarrow} \mathcal{K}(M)_1$ 
is the boundary operator of the $\mathcal{K}$-complex of $M$. Note that $\ker (\beta \circ \alpha) = \alpha^{-1} (\ker (\beta))$.

Now, let $x\in \ker (\beta) \cong H_1(M)$ be a homogeneous element of degree $\deg (x) > h_1(M)=\deg H_1(\mathcal{K}(M))$.
Since $\alpha$ is surjective, there exists an $y\in \mathbb{Z} \langle G^2 \rangle \otimes M[1]$
such that $x=\alpha (y)$. Note that $y\in \ker (\beta \circ \alpha)$ by construction. 
Moreover $\deg (y) = \deg (x)> h_1(M)$, because $\alpha$ preserves the degrees, therefore 
$y=d(z)$, for some $z\in \mathbb{Z} \langle G^4 \rangle \otimes M[2]$. 
We conclude that $x=\alpha(d(z))$. Since $\alpha \circ d =0$ (as we show below), it follows that $x=0$,
hence $\deg H_1(M) \leq h_1(M)$, i.e. \eqref{star thm 3.21} holds true for $i=1$.

To see that $\alpha \circ d =0$, let $(a_1, b_1, a_2, b_2)\otimes m \in \mathbb{Z} \langle G^4 \rangle \otimes M[2]$.
Then 
$$
d \left( (a_1, b_1, a_2, b_2)\otimes m \right) = (a_2, b_2)\otimes \llbracket a_1^{[a_2, b_2]}, b_1^{[a_2, b_2]}\rrbracket m - 
(a_1, b_1)\otimes \llbracket a_2, b_2\rrbracket m \, ,
$$
and 
\begin{eqnarray*}
&&\alpha \left( (a_2, b_2)\otimes \llbracket a_1^{[a_2, b_2]}, b_1^{[a_2, b_2]}\rrbracket m - 
(a_1, b_1)\otimes \llbracket a_2, b_2\rrbracket m \right) \\
&& = \llbracket a_2, b_2\rrbracket \otimes \llbracket a_1^{[a_2, b_2]}, b_1^{[a_2, b_2]}\rrbracket m - 
\llbracket a_1, b_1\rrbracket \otimes \llbracket a_2, b_2\rrbracket m \\
&& =  \llbracket a_2, b_2\rrbracket \llbracket a_1^{[a_2, b_2]}, b_1^{[a_2, b_2]}\rrbracket \otimes m - 
\llbracket a_1, b_1\rrbracket \llbracket a_2, b_2\rrbracket \otimes m =0 \, .
\end{eqnarray*}
Note that, in the second-last equality we have used equation 2.2. of \cite{Zimmer}.

2.  Lemma \ref{lemma 3.12} implies that  
$U \colon M_{g'} \to M_{{g'}+1}$ is an isomorphism for every ${g'}\geq \delta (M) + A(R) +1$.
The claim follows directly from Lemma \ref{EVW4.9} and \eqref{star thm 3.21}.

3. This claim follows from Proposition \ref{hpR}  and Remark \ref{A(R)finite}.
\end{proof}

\section{Main theorem for $n=0$}

\begin{theo}\label{Ustabilizes}
Let $G$ be a finite group. Let $R$ be the ring of connected components associated to $G$,
and let $M(q)$ be the left $R$-module defined in Example \ref{M(q)}. 
Then the homomorphism
$$
{U} \colon  M(q)_{g'} \to M(q)_{{g'}+1}
$$
is an isomorphism for every  ${g'}\geq (8\tilde{A}(R) + \deg (U))q + \tilde{A}(R) +6A(R) +2$. 
\end{theo}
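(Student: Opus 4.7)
\emph{The plan is to prove by induction on $q$ that}
\[
\max\{h_0(M(q)),h_1(M(q))\}\ \leq\ B(q):=(8\tilde{A}(R)+\deg(U))q+\tilde{A}(R)+A(R)+1.
\]
\emph{Since $B(q)+5A(R)+1$ equals the asserted range, Theorem \ref{HAM}.2 then yields the stabilization statement immediately. The base case $q=0$ is $M(0)=R$, for which $h_0(R)=0$ and Proposition \ref{hpR} gives $h_1(R)\leq A(R)+2\leq B(0)$.}

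\emph{For the inductive step the key tool is the second spectral sequence $E^r_{p,q'}(n)$ from Section \ref{ccss}, which in the stable range $p+q'\leq(n-3)/2$ converges to $M(p+q')_n$, with $E^2$-page identified in Example \ref{M(q)} by}
\[
E^2_{a,q'}(n)\ \cong\ H_{a+1}(\mathcal{K}(M(q')))_n\quad\text{for every }a\geq 1,
\]
\emph{and with the edge map $E^1_{0,q'}(n)\to M(q')_n$ equal to the final boundary $d\colon\mathcal{K}(M(q'))_1\to M(q')$ by Proposition \ref{d^1}. Fixing $q'=q$, convergence identifies $E^\infty_{0,q}(n)$ with $(R_{>0}M(q))_n$, so the filtration quotient $M(q)_n/E^\infty_{0,q}(n)=H_0(M(q))_n$ is filtered by subquotients of $E^\infty_{a,q-a}(n)\subseteq E^2_{a,q-a}(n)$ for $a\geq 1$, while the kernel of the projection $E^2_{0,q}(n)\onto E^\infty_{0,q}(n)$ coincides with $H_1(\mathcal{K}(M(q)))_n$ and is covered by the incoming differentials $d^r\colon E^r_{r,q-r+1}(n)\to E^r_{0,q}(n)$ for $r\geq 2$.}

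\emph{The numerical core then combines the inductive hypothesis with Theorem \ref{HAM}.1: for $q'<q$ and every $p\geq 0$,}
\[
h_p(M(q'))\ \leq\ B(q')+(5+p)\tilde{A}(R)+\deg(U).
\]
\emph{Bounding $h_0(M(q))$ amounts to maximizing $B(q-a)+(6+a)\tilde{A}(R)+\deg(U)$ over $a\in\{1,\dots,q\}$: the coefficient of $a$ is $\tilde{A}(R)-(8\tilde{A}(R)+\deg(U))=-7\tilde{A}(R)-\deg(U)<0$, so the maximum occurs at $a=1$ and equals $(8\tilde{A}(R)+\deg(U))q+A(R)+1<B(q)$. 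Bounding $h_1(M(q))$ amounts to maximizing $B(q-r+1)+(6+r)\tilde{A}(R)+\deg(U)$ over $r\in\{2,\dots,q+1\}$: the same sign argument places the maximum at $r=2$, where a direct computation yields exactly $B(q)$. Outside the stable range $n\geq 2q+3$ the bounds are trivial, since $\tilde{A}(R)\geq\deg(U)=1$ by Remark \ref{11central} forces $B(q)\geq 2q+3$ for all $q\geq 0$.}

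\emph{The main obstacle is the precise calibration of the slope $8\tilde{A}(R)+\deg(U)$ in $B(q)$: it is chosen exactly so that the smallest-$r$ incoming differential to $H_1$, paying an additive $(6+2)\tilde{A}(R)+\deg(U)$ correction while reducing the homological index by one, lands back on $B(q)$ rather than overshooting. Any smaller slope would break the self-sustaining induction. Once the bound $\max\{h_0(M(q)),h_1(M(q))\}\leq B(q)$ is established, Theorem \ref{HAM}.2 closes the induction with the claimed isomorphism range.}
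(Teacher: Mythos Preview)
Your proof is correct and follows essentially the same strategy as the paper's: induction on $q$, identification of the $E^2$-page of the second spectral sequence with $\mathcal{K}$-homology, and appeal to Theorem~\ref{HAM}. The organizational difference is that the paper carries the stronger inductive hypothesis $h_p(M(q))\leq Aq+Bp+C$ for \emph{all} $p$ (with $A=8\tilde{A}(R)+\deg U$, $B=\tilde{A}(R)$, $C=A(R)+1$), obtaining it from the $p=0,1$ cases via Theorem~\ref{HAM}.1 at the end of each step; you instead carry only the weaker hypothesis $\max\{h_0,h_1\}\leq B(q)$ and invoke Theorem~\ref{HAM}.1 inside the inductive step to bound the higher $h_p(M(q'))$ needed. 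The arithmetic closes in both arrangements precisely because of the calibration $A=8B+\deg U$ that you correctly identify.

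Two minor points of precision: (i) writing $E^\infty_{a,q-a}(n)\subseteq E^2_{a,q-a}(n)$ is not literally correct, since $E^\infty$ is only a subquotient of $E^2$; what you use (and what suffices) is the degree inequality. (ii) Your identification $E^\infty_{0,q}(n)=(R_{>0}M(q))_n$ and the interpretation of $\ker(E^2_{0,q}\twoheadrightarrow E^\infty_{0,q})$ as $H_1(\mathcal{K}(M(q)))_n$ both rely on Proposition~\ref{d^1}.2, which is stated under the stable-range hypothesis $q\leq(n-3)/2$; your check that $B(q)\geq 2q+3$ for the relevant $q$ handles this, as you note.
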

\begin{proof} 
Set 
\begin{eqnarray*}
A &:=& 8\tilde{A}(R) + \deg (U) \, , \\
B & := & \tilde{A}(R) \, , \\
C &:= &A(R) +1 \, .
\end{eqnarray*}
We prove  that 
\begin{equation}\label{hpMq}
h_p(M(q)) \leq Aq  + Bp+ C \, , \quad \forall \, p, q \geq 0 \, .
\end{equation}
In particular we have that $\max \{h_0(M(q)), h_1(M(q))\} \leq Aq+B+C$ and 
so the claim follows from Theorem \ref{HAM}, 2.

To prove \eqref{hpMq} we proceed by induction on $q$. For $q=0$,  $M(0)=R$
and  Proposition \ref{hpR} yields the inequality $h_p(R) \leq p+A(R)+1$.
Hence \eqref{hpMq} holds true in this case because $1 \leq \tilde{A}(R)=B$.

Let now $q'>0$ and let  us suppose  that \eqref{hpMq} holds true for every $q<q'$ and  $p\geq 0$.
Consider the first three terms of the $\mathcal{K}$-complex of $M(q')$, $(\mathcal{K}(M(q')), d)$:
\begin{equation}\label{hpMq+1}
0 \leftarrow M(q') \xleftarrow{d_1} \ZZ\langle G^2 \rangle \otimes M(q')[1] \xleftarrow{d_2} \ZZ \langle G^4 \rangle \otimes M(q')[2] \, ,
\end{equation}
where, to simplify the discussion below, we have denoted with $d_1$ and $d_2$ the corresponding
boundary operators.
Recall that, for $p>1$, the   term in degree ${g'}$ of $d \colon \mathcal{K}(M(q'))_{p} \to \mathcal{K}(M(q'))_{p-1}$
coincides with $d^1 \colon E^1_{p-1,q'}({g'}) \to E^1_{p-2, q'}({g'})$ (the differential described in Proposition \ref{d^1}).
Therefore, for $p>1$, $E^2_{p-1, q'}({g'})$ is equal to the  term in degree ${g'}$ of $H_{p}(M(q'))$.
While, for $p=1$, 
the term in degree ${g'}$ of $d\colon \mathcal{K}(M(q'))_1 \to M(q')$ 
is the edge map $E^1_{0, q'}({g'}) \to E^\infty_{0, q'}({g'})$ (Proposition \ref{d^1}).

By the inductive  hypothesis we have that
\[
E^2_{j,q'+1-j}({g'}) \cong H_{j+1}(\mathcal K (M({q'+1-j}))_{g'})=0 \, , 
\]
for every $j>1$ and  ${g'}> A(q'+1-j) + B(j+1)+ C$. 
In particular, for every ${g'}>Aq' -A+3B+C$, $E^2_{2, q'-1}({g'})=0$ and so 
there is no differential with target $E^2_{0,q'}({g'})$. Moreover, for  $r>2$, the same is true 
for $E^r_{0,q'}({g'})$, since $E^r_{r,q'-r+1}({g'})$ is a sub-quotient of $E^2_{r,q'-r+1}({g'})$, and $E^2_{r,q'-r+1}({g'})=0$
for every ${g'}>Aq' -A+3B+C$ (note that $B-A<0$).
Hence 
\begin{equation}\label{*** theo 4.1}
E^2_{0,q'}({g'}) =E^\infty_{0,q'}({g'}) \, , \qquad \forall \, {g'}>Aq' -A+3B+C \, .
\end{equation}

On the other hand, again  by the  inductive hypothesis, we have that
$$
E^2_{j,q'-j}({g'}) \cong H_{j+1} (\mathcal K (M(q'-j))_{g'})=0 \, , 
$$
for $j>0$ and ${g'}>A(q'-j) +B(j+1)+C$.
In particular, since $B-A<0$,  
$$
E^2_{j,q'-j}({g'})=0 \, , \quad \forall  j>0 \, , {g'}>  Aq' -A+2B+C \, .
$$
Therefore $E^\infty_{j,q'-j}({g'})=0$ in the same range, and hence
$$
M(q')_{g'}= E^\infty_{0, q'}({g'})= E^2_{0,q'}({g'}) \, , \quad \forall \, {g'}> Aq' -A+3B+C \, ,
$$
where in the last equality we have used \eqref{*** theo 4.1}.

From this we conclude that $d_1$ in \eqref{hpMq+1} induces an isomorphism
$$
M(q')_{g'} \leftarrow {\rm coker}(d_2) =E^2_{0,q'}({g'}) \, , \quad \forall \, {g'}> Aq' -A+3B+C \, .
$$
Equivalently, \eqref{hpMq+1} is exact to the left and in the middle, in the same range, i.e.
$$
h_0 (M(q')) , h_1(M(q'))\leq Aq' -A+3B+C \, .
$$ 
Theorem \ref{HAM}, 1., now implies that 
$$
h_p(M(q')) \leq Aq' -A+3B+C + (5+p)\tilde{A}(R) +\deg (U) = 
Aq' +Bp+C \, .
$$
Hence \eqref{hpMq} holds true for $q'$.
\end{proof}

\section{ The Main Theorem: general case with   $ n > 0$.}
\label{general}
In this section we consider Galois $G$-coverings $C\to C' := C/G$ of compact Riemann surfaces $C'$ of genus $g'$,
which are ramified over $n\geq 0$ marked points and with one tangentially framed orbit.
We show that the homology groups of the corresponding moduli space, for fixed $n\geq 0$, stabilize when $g' >>0$. 

We use, as before, Teichm\"uller theory, therefore the homology groups with coefficients in $\ZZ$ of this moduli space 
(considered as a Deligne-Mumford stack) coincide with the  homology groups
$H_*(\Gamma_{g', 1}^n, \ZZ \langle G^{2g'+n} \rangle)$ (see, e.g. \cite{Behrend}), of $\Gamma_{g', 1}^n$
with values in the $\Gamma_{g', 1}^n$-module $\ZZ \langle G^{2g'+n} \rangle$, 
where $G^{2g'+n}$ is identified with the set of group homomorphisms 
$$
\mu \colon \pi_1(\Sigma^n_{g', 1}\setminus \sB_\Sigma, y_0) \to G
$$
(after the choice of a geometric basis for the fundamental group).

Let $R$ be the ring of connected components defined in Section \ref{ring of connected components}.
For any $q\geq 0$, the $q$-th homology (graded) module is (in this case) defined as 
\begin{equation}\label{Module^n}
M^n(q)= \oplus_{g'\geq 0}M^n(q)_{g'} :=  \oplus_{g'\geq 0} H_q ({ \Gamma^n_{g', 1}}, \ZZ \langle G^{2{g'}+n} \rangle ) \, 
\end{equation} 
(in analogy with Example \ref{M(q)}). 

We define a structure of graded left $R$-module on $M^n(q)$ as follows.
For any $\ell \geq 0$
and $(\tilde{a}_1, \tilde{b}_1, \ldots , \tilde{a}_\ell, \tilde{b}_\ell) \in G^{2\ell}$, the map
\begin{eqnarray*}
\ZZ \langle G^{2{g'} + n } \rangle &\to& \ZZ \langle G^{2\ell + 2{g'} + n} \rangle \\
 ( a_1, b_1, \ldots , a_{g'}, b_{g'}, c_1, \ldots ,c_n ) &\mapsto& 
(\tilde{a}_1, \tilde{b}_1, \ldots , \tilde{a}_\ell, \tilde{b}_\ell, a_1, b_1, \ldots , a_{g'}, b_{g'}, c_1, \ldots , c_n )
\end{eqnarray*}
is equivariant with respect to the inclusion $ \Gamma^n_{g', 1} \hookrightarrow \Gamma^n_{\ell+{g'}, 1}$.
Therefore it induces a homomorphism between homology groups 
$$
H_q ({ \Gamma^n_{g', 1}}, \ZZ \langle G^{2{g'}+n} \rangle ) \to 
H_q ({ \Gamma^n_{\ell+g', 1}}, \ZZ \langle G^{2\ell+2{g'}+n} \rangle ) \, , \quad \forall q\geq 0 \, .
$$
The same argument as in Example \ref{M(q)} shows that this homomorphism depends only on the class 
$\llbracket \tilde{a}_1, \tilde{b}_1, \ldots , \tilde{a}_\ell, \tilde{b}_\ell \rrbracket \in R_\ell$, it will be denoted as follows:
\begin{eqnarray*}
 H_q ({ \Gamma^n_{g', 1}}, \ZZ \langle G^{2{g'}+n} \rangle ) & \to & 
 H_q ({ \Gamma^n_{\ell+{g'}, 1}}, \ZZ \langle G^{2\ell+2{g'}+n} \rangle ) \\
 m &\mapsto& \llbracket \tilde{a}_1, \tilde{b}_1, \ldots , \tilde{a}_\ell, \tilde{b}_\ell \rrbracket  m \, .
\end{eqnarray*}
Therefore $M^n(q)$ is a graded left $R$-module and so we can consider its associated 
$\mathcal{K}$-module, $\mathcal{K}(M^n(q))$ (defined in Definition \ref{defKM}). 
Moreover, the operator $U \colon M^n(q) \to M^n(q)$ is well defined
and has degree $1$.

\begin{rem}
Note that $Mn(q)$ has also a natural right-action by $R$, it is induced by the map  
\begin{eqnarray*}
\ZZ \langle G^{2{g'}+n} \rangle &\to& \ZZ \langle G^{2{g'} +2\ell +n} \rangle \\
(a_1, b_1, \ldots , a_{g'}, b_{g'}, c_1, \ldots ,c_n) &\mapsto& 
( a_1, b_1, \ldots , a_{g'}, b_{g'},\tilde{a}_1^{c^{-1}}, \tilde{b}_1^{c^{-1}}, \ldots , \tilde{a}_\ell^{c^{-1}}, \tilde{b}_\ell^{c^{-1}}, c_1, \ldots , c_n ) \, ,
\end{eqnarray*}
for any $(\tilde{a}_1, \tilde{b}_1, \ldots , \tilde{a}_\ell, \tilde{b}_\ell) \in G^{2\ell}$ with $c:=c_1\cdots c_n$ a shorthand notation.
\end{rem}


We will need the following results about $M^n(0)$, which correspond to Lemma \ref{R>0killsHKR} and
Proposition \ref{hpR}. Note that, for any $g'\geq 0$, $M^n(0)_{g'}$ is a free $\ZZ$-module of rank equal to the number
of $\Gamma^n_{g', 1}$-orbits in $G^{2g'+n}$, which coincides with the number of connected components 
of the moduli space of $G$-coverings of curves of genus $g'$, branched over $n$ points.
It follows from \cite{CLP16} that $A(M^n(0))$ (defined in Notation \ref{notation degM}) is finite, 
i.e. $U \colon M^n(0)_{g'} \to M^n(0)_{g'+1}$
is an isomorphism, for $g'>>0$.

\begin{prop}\label{hpMn}
1. For any $x\in R_{>0}$, the right multiplication by $x$ on  $M^n(0)$ gives a homomorphism of chain complexes
$\mathcal{K}(M^n(0)) \to \mathcal{K}(M^n(0))$, which induces the zero-map in homology.  

2. $A(M^n(0))$ is finite and, for any $p\geq 0$, we have that 
$$
h_p(M^n(0)) \leq p + A(M^n(0)) +1 \, .
$$
\end{prop}
\begin{proof}
1. The proof of this claim is a straightforward adaptation of the one of Lemma \ref{R>0killsHKR}.
Indeed: 
A $\ZZ$-module basis of $\sK (M^n(0)) _{p+1}$ is given by elements of the form
\[
(a_1,b_1,\dots, a_{p+1},b_{p+1}) 
\llbracket g_1,h_1,\dots,g_q,h_q ; c_1, \dots, c_n \rrbracket 
\]
For a pair $(g,h)\in G\times G$ define a map 
$S_{(g,h)}:\sK (M^n(0))_{p+1} \to \sK (M^n(0))_{p+2} $ by
\[
(a_1,b_1,\dots,a_{p+1},b_{p+1})
\llbracket g_1,h_1,\dots,g_q,h_q ; c_1, \dots, c_n \rrbracket \hspace*{4cm}
\]
\[
 \mapsto \quad
(g^{\tau^{-1}},h^{\tau^{-1}},a_1,b_1,\dots,a_{p+1},b_{p+1}) 
\llbracket g_1,h_1,\dots,g_q,h_q ; c_1, \dots, c_n \rrbracket 
\]
where $\tau$ is the product of all handle commutators and all branch
monodromies, namely, 
\[
\tau \quad : = \quad [a_1,b_1]\cdots[a_{p+1},b_{p+1}][g_1,h_1]\cdots[g_q,h_q]c_1\cdots c_n
\quad = \quad \tau_{ab}\tau_{gh} c
\]
with
$\tau_{ab} \quad : = \quad [a_1,b_1]\cdots[a_{p+1},b_{p+1}],$
$\tau_{gh} \quad : = [g_1,h_1]\cdots[g_q,h_q]
$
and 
$c = c_1\cdots c_n$,
and one can check that the product $\tau$ is independent of the choice of  all representatives
of the equivalence class 
$ \llbracket g_1,h_1,\dots,g_q,h_q ; c_1, \dots, c_n \rrbracket  $.

Then it is a standard exercise to compute
\begin{eqnarray*}
& & \big( S_{(g,h)} d + d S_{(g,h)} \big)\bigg(
(a_1,b_1,\dots,a_{p+1},b_{p+1}) 
\llbracket g_1,h_1,\dots,g_q,h_q ; c_1, \dots, c_n \rrbracket \bigg)\\
& = &
(a_1,b_1,\dots,a_{p+1},b_{p+1})
 \llbracket g^{\tau^{-1}\tau_{ab}},h^{\tau^{-1}\tau_{ab}},
g_1,h_1,\dots,g_q,h_q ; c_1, \dots, c_n \rrbracket \\
& = &
(a_1,b_1,\dots,a_{p+1},b_{p+1})
 \llbracket g^{(\tau_{gh}c)^{-1}
},h^{(\tau_{gh}c)^{-1}
},
g_1,h_1,\dots,g_q,h_q ; c_1, \dots, c_n \rrbracket \\
& = &
(a_1,b_1,\dots,a_{p+1},b_{p+1})
 \llbracket 
g_1,h_1,\dots,g_q,h_q, 
g^{c^{-1}}, h^{c^{-1}}
 ; c_1, \dots, c_n \rrbracket\\
& = &
(a_1,b_1,\dots,a_{p+1},b_{p+1}) \llbracket g_1,h_1,\dots,g_q,h_q  ; c_1, \dots, c_n \rrbracket \cdot
\llbracket g, h \rrbracket
\end{eqnarray*}
The first equality is a straightforward  application of the definitions of $S_{(g,h)}$ and $d$, followed by a cancellation
which leaves only out the first term of the sum corresponding to $dS_{(g,h)}$, 
which is exactly the term appearing  in the second row.\\
The third  equality is due to a sequence of Zimmermann moves (2.2, page 250  in  \cite{Zimmer}), which allows to swap adjacent
pairs of group elements, provided that one of them is conjugated by the commutator of the other.\\
The last equality is due to the definition of the right module structure.
\\
Thus $S_{(g,h)}$ provides a chain homotopy between the zero map and 
the multiplication by $\llbracket g, h \rrbracket$ on the right.
The claim then follows, since $R_{>0}$ is generated by such elements.

2. The fact that $A(M^n(0)) <0$ follows from \cite{CLP16}, as explained above.
The same argument in the proof of Proposition \ref{hpR} implies that $h_p(M^n(0)) \leq p + A(M^n(0)) +1$,
for any $p\geq 0$.
\end{proof}

Then we have the following extension of Theorem \ref{Ustabilizes}.

\begin{theo}\label{Ustabilizes^n}
Let $n\geq 0$ be a fixed natural number,  let $M^n(q)$ be the graded left $R$-module 
defined in \eqref{Module^n}. Then, for any ${g'}\geq (8\tilde{A}(R) + \deg (U))q + \tilde{A}(R) +5A(R) +
A(M^n(0)) +2$,
the homomorphism
$$
{U} \colon  M^n(q)_{g'} \to M^n(q)_{{g'}+1}
$$
is an isomorphism, where $\tilde{A}(R)$ and ${A}(R)$ are defined in Notation \ref{notation degM}. 
\end{theo}

The proof of this theorem follows the same steps as those of Theorem \ref{Ustabilizes}, with the following changes. 

A tethered chain in $\Sigma^n_{g', 1}$ is defined as in Section \ref{tch complex}, 
with the additional requirement that it doesn't contain any marked point. 
The complex of tethered chains on  $\Sigma^n_{g', 1}$, $\tch^n_{g', 1}$,
is defined accordingly and it is $\frac{1}{2}(g'-3)$-connected. 
To see this, consider the surface $\Sigma^0_{g', 1+n}$ obtained from $\Sigma^n_{g', 1}$ by removing 
$n$ open disks centered at the marked points and pairwise disjoint. The inclusion 
$\Sigma^0_{g', 1+n} \subseteq \Sigma^n_{g', 1}$ yields an identification $\tch^n_{g', 1} \to \tch_{g', 1 +n}$
and so the claim follows from \cite[Proposition 5.5]{h-v}.

The  complex of $G$-marked tethered chains in $\Sigma^n_{g', 1}$ is defined as in Definition \ref{G-marked tch},
it is the simplicial complex 
\[
X^n_{g'} := { \tch^n_{g', 1}} \times G^{2{g'}+n} \, .
\]
It will be considered with the diagonal left action of $\Gamma^n_{g', 1}$. 

Replacing (in Section \ref{the spectral sequences}) the chain complex of $X_{g'}$ with that of $X_{g'}^n$,
we obtain two spectral sequences, $\tilde{E}_{p,q}^r ({g'})$ and $E^r_{p,q} ({g'})$,
both converging to  the  homology groups $H_{p+q}(\Gamma^n_{g', 1}, \ZZ \langle G^{2{g'}+n}\rangle)$, 
if $p+q \leq \frac{1}{2}({g'}-3)$. Proposition \ref{d^1} holds true also in this case, 
where $H_q({ \Gamma_{g'', 1}}, \ZZ \langle G^{2{g''}}\rangle)$ is replaced by 
$H_q({ \Gamma^n_{g'', 1}}, \ZZ \langle G^{2{g''}+n}\rangle)$, for $g''=g', g'-p, g'-p-1$. 
It follows from this that, for $p>1$, the differential of the second spectral sequence 
$d^1 \colon E^1_{p-1,q}({g'}) \to E^1_{p-2, q}({g'})$ coincides with the   term in degree ${g'}$ of 
the $\mathcal{K}$-complex associated to $M^n(q)$, $d \colon \mathcal{K}(M^n(q))_{p} \to \mathcal{K}(M^n(q))_{p-1}$.
While, for $p=1$, 
the term in degree ${g'}$ of $d\colon \mathcal{K}(M^n(q))_1 \to M^n(q)$ 
is the edge map $E^1_{0, q}({g'}) \to E^\infty_{0, q}({g'})$ (compare with the discussion at the end of Example \ref{M(q)}).

Set now
\begin{eqnarray*}
A &:=& 8\tilde{A}(R) + \deg (U) \, , \\
B & := & \tilde{A}(R) \, , \\
C^n &:= &A(M^n(0)) +1 \, .
\end{eqnarray*}
The same arguments as in the proof of \eqref{hpMq} imply that
\begin{equation}\label{hpMq_n}
h_p(M^n(q)) \leq Aq  + Bp+ C^n \, , \quad \forall \, p, q \geq 0 \, .
\end{equation}
In particular we have that $\max \{h_0(M^n(q)), h_1(M^n(q))\} \leq Aq+B+C^n$ and 
so the claim follows from Theorem \ref{HAM}, 2.

\qed

The conclusion is then  that, since Theorem \ref{Ustabilizes^n} generalizes 
Theorem \ref{Ustabilizes} for the  more general case where $ n > 0$, 
 Theorem \ref{main} is then proven in full generality.

\bigskip

{\bf Acknowledgements.} 
We are grateful to various  sources for funding this research:
the  ERC Advanced Grant 340258 TADMICAMT, 2014-2020;
the National Project PRIN 2022 ``Geometry of Algebraic Structures: Moduli, Invariants, Deformations";
the research group GNSAGA of INDAM and the FRA  of the University of Trieste.

 Thanks to a referee for suggestions and for convincing us to put pictures illustrating our contructions.


\medskip
\noindent {\bf Authors' Addresses:}\\

\noindent Fabrizio Catanese,\\ 
Michael L\"onne, \\
 Lehrstuhl Mathematik VIII,  Mathematisches
Institut der Universit\"at
Bayreuth\\ NW II,  Universit\"atsstr. 30,  95447 Bayreuth (Germany).\\

\noindent Fabio Perroni, \\
Dipartimento di Matematica, Informatica e Geoscienze, \\
Universit\`a degli Studi di Trieste,
via Valerio 12/1, 34127 Trieste, Italy.\\

\noindent           email:                 
        Fabrizio.Catanese@uni-bayreuth.de,
Michael.Loenne@uni-bayreuth.de, 
fperroni@units.it

\end{document}